\documentclass[11pt]{basicstyle}

\citesort
\usepackage{booktabs}

\usepackage{tikz}
\usetikzlibrary{matrix,arrows}

\DeclareMathOperator{\GL}{GL}
\DeclareMathOperator{\U}{U}
\DeclareMathOperator{\SU}{SU}
\DeclareMathOperator{\Aut}{Aut}
\DeclareMathOperator{\Inn}{Inn}
\DeclareMathOperator{\Sym}{Sym}
\DeclareMathOperator{\Alt}{Alt}
\DeclareMathOperator{\Norm}{N}

\DeclareMathOperator{\ST}{ST}

\newcommand{\C}{\mathbb{C}}
\newcommand{\D}{\mathbb{D}}
\newcommand{\R}{\mathbb{R}}
\newcommand{\HH}{\mathbb{H}}
\newcommand{\sS}{\mathbb{S}}
\newcommand{\id}{\mathbf{1}}

\newcommand{\Tr}{\top}
\newcommand{\soi}[1]{[\![#1]\!]}

\renewcommand{\(}{\langle\mskip2mu\relax}
\renewcommand{\)}{\mskip2mu \rangle}

\newcommand{\dotcup}{\mathrel{\dot\cup}}
\let\normal=\trianglelefteq

\theoremstyle{mythm}
\newtheorem{thm}{Theorem}[section]
\newtheorem{cor}[thm]{Corollary}
\newtheorem{lemma}[thm]{Lemma}
\theoremstyle{mydefn}
\newtheorem{defn}[thm]{Definition}
\newtheorem{rem}[thm]{Remark}
\newtheorem{rems}[thm]{Remarks}
\numberwithin{equation}{section}
\newtheorem{example}[thm]{Example}

\makeatletter
\long\def\@makecaption#1#2{\vspace*{8\p@}{%
        \setbox0=\vbox{\footnotesize\baselineskip=9\p@
        \color{mybrown}\rm #1.\ #2}%
        \setbox0=\vbox{\unvbox0 \setbox1=\lastbox
        \setbox1=\hbox to \textwidth
            {\hfill\unhbox1\hfill}\box1}\box0}\par
        \vspace*{3\p@}}
\makeatother

\begin{document}
\runningtitle{Rank two quaternionic reflection groups}
\title{Systems of imprimitivity for\\[4pt]
rank two quaternionic reflection groups}

\author{D. E. Taylor}
\address{School of Mathematics and Statistics, 
The University of Sydney
\email{Donald.Taylor@sydney.edu.au}}

\authorheadline{D. E. Taylor}

\version{27 July 2026}

\begin{abstract}
We revise the enumeration of the imprimitive rank two quaternionic reflection 
groups, adding missing groups and establishing isomorphisms between
groups in the published tables. The isomorphisms are obtained as a
consequence of the determination of the reflection groups with more than
one system of imprimitivity. We find that there are primitive complex
reflection groups which have infinitely many systems of imprimitivity
when represented as quaternionic reflection groups.
\end{abstract}

\def\classtag{2020}%
\classification{primary 20F55; secondary 51F15}
\keywords{quaternions, finite reflection groups, imprimitivity}

\maketitle

\section{Introduction}
In 1978 the finite quaternionic reflection groups were classified by
Arjeh Cohen~\cite{cohen:1978,cohen:1980}. When the group is irreducible and 
its complexification is primitive the results are derived from the 
classifications of Blichfeldt \cite{blichfeldt:1917}, Huffman and Wales 
\cite{huffman-wales:1977,wales:1978}. 

In recent years, the connection with the generalised McKay correspondence
and symplectic resolutions of quotient singularities has sparked renewed
interest in these groups.  See, for example, 
\cite{bellamy-schedler:2016,bellamy-schmitt-thiel:2023} and the references 
given there.

However, as pointed out by Yamagishi \cite{yamagishi:2018} and others, there
are omissions in the published tables and some groups appear more than once. 
The main results of the present paper are Theorems \ref{thm:bin_dihedral},
\ref{thm:binpol_11} and \ref{thm:binpol_theta}: the
determination of the rank two quaternionic reflection groups with more than 
one system of imprimitivity. In many cases the existence of more than one 
system corresponds to conjugacy between groups in the tables. In addition
there are irreducible complex reflection groups (both primitive and
imprimitive) which have infinitely many systems of imprimitivity when
represented as quaternionic reflection groups. (Remarks \ref{rem:inf2}
and \ref{rem:inf1}.)

Table \ref{tbl:5}, at the end of this paper, is the revised list of
imprimitive irreducible proper rank two quaternionic reflection groups.
See \cite{waldron:2025} for another approach.

\section{Notation}\label{sec:notation}
Let $\HH$ denote the division algebra of real quaternions. Its elements
are expressions $q = a_0 + a_1i + a_2 j + a_3 k$, where
$a_0$, $a_1$, $a_2$, $a_3\in\R$, and $i^2 = j^2 = k^2 = ijk = -1$.  The
conjugate of $q$ is $\overline q = a_0 - a_1i - a_2 j - a_3 k$, its norm 
is $\Norm(q) = q\bar q$ and its trace is $q + \bar q$. The norm is
multiplicative and therefore the 3-sphere $\sS^3$ of quaternions of norm 1
is a group. For more details see \cite{conway-smith:2003,voight:2021,
lehrer-taylor:2009}.

Let $V$ be a left $\HH$-space of dimension $n$ with hermitian inner product
$(-\mid-)$. Throughout this paper the inner product of row vectors 
$u = (u_1,\dots,u_n)$ and $v = (v_1,\dots,v_n)$ in $V$ is
$(u\mid v) = \sum_{h=1}^n u_h\overline v_h$. An $n\times n$ matrix of 
quaternions acts on a row vector by multiplication on the right.
The \emph{quaternionic unitary group} $\U_n(\HH)$ is the group of
$n\times n$ matrices acting on $V$ that preserve the inner product; 
i.e., matrices $A$ such that $A\overline{A}^\Tr = I$, where 
$\overline{A}^\Tr$ is the transposed conjugate of $A$.

A group $G\subseteq \U_n(\HH)$ acting on $V$ is \emph{imprimitive} if for 
some $m > 1$ the space $V$ is a direct sum $V_1\oplus \dots\oplus V_m$ of 
non-zero subspaces $V_h$ such that the action of $G$ permutes the $V_h$ 
among themselves. The set $\{V_1, \dots, V_m\}$ is a \emph{system of 
imprimitivity} for~$G$.

\begin{defn}
When the rank of $G$ is two and $G$ is imprimitive we denote the system of
imprimitivity $\{\(u\),\(v\)\}$ by $\soi{u,v}$. Thus $\soi{e_1,e_2}$ is the 
system of imprimitivity where $e_1 = (1,0)$ and $e_2 = (0,1)$ are the 
standard basis vectors. For reflection groups the vectors $u$ and $v$ are
orthogonal and so $\soi{u,v}$ is uniquely determined by $u$.
\end{defn}

A \emph{reflection} in $V$ is an element $s\in\U_n(\HH)$ of finite order
whose space $W$ of fixed points has dimension $n-1$. A \emph{root} of 
the reflection $s$ is a vector that spans the orthogonal complement of $W$.
If $a$ is a root of $s$, then $as = \xi a$ for some $\xi\in\sS^3$. It follows 
that
\[
  vs = v - \frac{(v\mid a)}{(a\mid a)}(1-\xi)a.
\]
Conversely, given a non-zero vector $a\in V$ and $\xi\in\sS^3$ let $s_{a,\xi}$ 
denote the linear transformation defined by the above formula. For $\xi\ne 1$, 
it is a reflection. A \emph{quaternionic reflection group} is a subgroup $G$ of 
$\U_n(\HH)$ that is generated by quaternionic reflections. A familiar argument
(for example, \cite[Theorem 1.27]{lehrer-taylor:2009}) shows that $G$ decomposes 
as a direct product of irreducible reflection subgroups. Furthermore, if $G$ is
irreducible and imprimitive, the subspaces in a system of imprimitivity are 
pairwise orthogonal of dimension 1; in this case the group can be 
represented by monomial matrices.

For all $g\in\U_n(\HH)$ we have $g^{-1}s_{a,\xi}g = s_{ag,\xi}$. For all
$\lambda\in \HH^\times$ we have $s_{\lambda a,\xi} = s_{a,\lambda^{-1}\xi\lambda}$
and for all $\lambda,\eta\in \sS^3\setminus\{1\}$ we have 
$s_{a,\xi}s_{a,\eta} = s_{a,\xi\eta}$.

For every $\theta\in\HH$ such that $\theta^2 = -1$, the subspace spanned by 1 
and $\theta$ can be identified with the field of complex numbers. In particular, 
$\C$ will denote the span of 1 and $i$ in $\HH$. Under this identification, a 
subgroup $G$ of $\U_n(\C)$ --- unitary matrices in the usual sense ---
becomes a subgroup $G^\sharp$ of $\U_n(\HH)$. Following \cite{cohen:1980} such a 
group is said to be of \emph{complex type} and a group not of complex type is
said to be a \emph{proper quaternionic group}.

The left $\HH$-space $V$ with standard basis $e_1$, $e_2$, \dots,~$e_n$
may be regarded as a $\C$-space $V_\C$ with basis $e_1$, $e_2$, \dots,~$e_n$, 
$je_1$, $je_2$, \dots,~$je_n$. If $A$ is an $n\times n$ quaternionic matrix,
then $A = A_1 + A_2j$, where $A_1$ and $A_2$ are $n\times n$ complex
matrices. The action of $A$ on $V_\C$ is given by the matrix
$A^\circ = \begin{pmatrix}A_1&A_2\\[2pt]-\overline A_2&\overline A_1\end{pmatrix}$.
The \emph{complexification} of a group $G$ of quaternionic matrices is
$G^\circ = \{\,A^\circ \mid A\in G\,\}$.

As indicated in \cite[\S4.1]{bellamy-schedler:2016} an irreducible quaternionic
reflection group $G$ is of complex type $H^\sharp$ for some irreducible
complex reflection group $H$ if and only if $G^\circ$ is reducible.
This is implicit in \cite{cohen:1980} and a short proof can be found in
\cite[\S1.2.2]{schmitt:2023}.

The irreducible primitive and imprimitive complex reflection groups $G_n$ and
$G(m,p,n)$ of Shephard and Todd \cite{shephard-todd:1954,lehrer-taylor:2009} 
will be denoted by $\ST(n)$ and $\ST(m,p,n)$ when considered as 
quaternionic reflection groups of complex type.

Depending on the context, $\id$ denotes the identity automorphism or the 
trivial group.

\section{The finite subgroups of $\sS^3$}\label{sec:binpol}
The finite subgroups of the multiplicative group of quaternions are the
cyclic groups and binary polyhedral groups \cite{lehrer-taylor:2009,voight:2021}. 
The following elements of $\sS^3$ are used in their definitions.
\begin{gather*}
  \zeta_m = \cos\,(2\pi/m) + i\sin\,(2\pi/m),\quad
  \varpi = \tfrac12(-1 + i + j + k),\\
  \gamma = \tfrac1{\sqrt2}(1 + i),\quad
  \sigma = \tfrac12(\tau^{-1} + i + \tau j), \text{ where } 
  \tau = \tfrac12(1 +\sqrt5).
\end{gather*}

\noindent
Every finite subgroup of $\HH^\times$ is conjugate in $\sS^3$ to one of
the following.
\begin{enumerate}[\enspace(i)]
\item The cyclic group $\mathcal C_m = \(\zeta_m\)$ of order $m \ge 1$.
\item The binary dihedral group $\mathcal D_m = \(\zeta_{2m}, j\)$
 of order $4m$, ($m \ge 1$).
\item The binary tetrahedral group $\mathcal T = \(i,j,\varpi\)$
 of order 24.
\item The binary octahedral group $\mathcal O = \(i,j,\varpi,\gamma\)$
 of order 48.
\item The binary icosahedral group $\mathcal I = \(i,\sigma\)$,
 of order 120.
\end{enumerate}
(For the convenience of later notation the cyclic group of order 4 occurs 
twice: as $\mathcal C_4 = \(i\)$ and $\mathcal D_1 = \(j\)$.)

\smallskip\noindent
The binary polyhedral groups modulo their central subgroup 
$\mathcal C_2 = \{\pm1\}$ are the following quotients.
\begin{enumerate}[\enspace(i)]
\item $\mathcal D_m/\mathcal C_2\simeq\D_m = \( u, v \mid u^m = v^2 = (uv)^2 = 1\)$,
the dihedral group of order $2m$.
\item $\mathcal T/\mathcal C_2\simeq \Alt(4)$, the alternating group of order 12.
\item $\mathcal O/\mathcal C_2\simeq \Sym(4)$, the symmetric group of order 24.
\item $\mathcal I/\mathcal C_2\simeq \Alt(5)$, the alternating group of order 60.
\end{enumerate}
(This notation includes $\D_1\simeq \mathcal C_2$ and
$\D_2\simeq \mathcal C_2\times\mathcal C_2$.)

\section{The irreducible imprimitive groups of rank two}\label{sec:imprim2}
We begin by reviewing the construction of the groups $G(K,H,\varphi)$ defined 
in \S2 of \cite{cohen:1980}, where $K$ is a finite group of quaternions, $H$ is a
normal subgroup of $K$ and $\varphi$ is an automorphism of $K/H$ such that 
$\varphi^2 = \id$.

Let $\mu : K \to K/H$ be the natural homomorphism and let $K\times_\varphi K$ 
be the equaliser of $\varphi\mu$ and $\mu$. That is, 
$K\times_\varphi K = \{\,(\xi,\eta)\in K\times K\mid\varphi(H\xi) = H\eta\,\}$. 
For $(\xi,\eta)\in K\times_\varphi K$ the map $\sigma(\xi,\eta) = (\eta,\xi)$ is an 
automorphism and the semidirect product $(K\times_\varphi K)\(\sigma\)$ has a faithful 
representation as a subgroup $G(K,H,\varphi)$ of $\U_2(\HH)$ where $(\xi,\eta)$ 
corresponds to $\begin{pmatrix}\xi&0\\0&\eta\end{pmatrix}$ and $\sigma$ corresponds 
to $\begin{pmatrix}0&1\\1&0\end{pmatrix}$. The order of $G(K,H,\varphi)$ is
$2|K|\;|H|$.

\begin{defn}\label{defn:L}
$L_\varphi = \{\,\xi\in K \mid\varphi(H\xi) = H\xi^{-1}\,\}$.
\end{defn}

\begin{lemma}\label{lemma:gen}
$G(K,H,\varphi)$ is a quaternionic reflection group if and only if $L_\varphi$ 
generates~$K$. 
\end{lemma}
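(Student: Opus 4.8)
The plan is to classify the reflections contained in $G = G(K,H,\varphi)$ explicitly, and then recast the condition $G = \langle R\rangle$ (where $R$ denotes the set of reflections in $G$) as a statement about $\langle L_\varphi\rangle$. Every element of $G$ is either diagonal, $\mathrm{diag}(\xi,\eta)$, or anti-diagonal, $\begin{pmatrix}0&\xi\\\eta&0\end{pmatrix}$, with $(\xi,\eta)\in K\times_\varphi K$, so I would first compute the fixed space of each type by solving $vg = v$ for a row vector $v=(u_1,u_2)$ acted on from the right. A diagonal element fixes a line precisely when exactly one of $\xi,\eta$ equals $1$; membership in $K\times_\varphi K$ then forces the non-identity entry to lie in $H$, because $\varphi$ fixes the trivial coset. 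Thus the diagonal reflections are the $\mathrm{diag}(h,1)$ and $\mathrm{diag}(1,h)$ with $h\in H\setminus\{1\}$. For an anti-diagonal element the fixed-point equations reduce to $u_2(\eta\xi-1)=0$, so it fixes a line exactly when $\xi=\eta^{-1}$; imposing the equaliser condition $\varphi(H\xi)=H\eta$ on $\xi=\eta^{-1}$ turns it into $\varphi(H\eta)=H\eta^{-1}$, which is precisely $\eta\in L_\varphi$. Hence the anti-diagonal reflections are the involutions $r_\eta=\begin{pmatrix}0&\eta^{-1}\\\eta&0\end{pmatrix}$ with $\eta\in L_\varphi$. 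I expect this reflection classification to be the crux: everything afterwards is routine once the defining condition of $L_\varphi$ has been matched to the requirement $\xi\eta=1$.

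Two easy observations drive the rest. Since $\varphi$ fixes the trivial coset, $1\in L_\varphi$ and $H\subseteq L_\varphi$; in particular $\sigma=r_1$ is a reflection, so $\langle R\rangle$ meets the anti-diagonal coset. As $K\times_\varphi K$ has index $2$ in $G$, this reduces the problem to its diagonal part: writing $\langle R\rangle_0=\langle R\rangle\cap(K\times_\varphi K)$ and comparing orders, one has $\langle R\rangle = G$ if and only if $\langle R\rangle_0 = K\times_\varphi K$.

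For the forward direction I would argue contrapositively. Set $K_0=\langle L_\varphi\rangle$ and suppose $K_0\neq K$. The collection $G_0$ of elements of $G$ all of whose quaternion entries lie in $K_0$ is a subgroup (closure is a direct check on the products of diagonal and anti-diagonal matrices, using that $K_0$ is a subgroup), and it contains every reflection because the entries $h,\eta,\eta^{-1}$ appearing above all lie in $H\cup L_\varphi\subseteq K_0$. Since the projection $\pi_1\colon K\times_\varphi K\to K$ is onto, some diagonal element of $G$ has first entry outside $K_0$, so $G_0$ is proper; hence $\langle R\rangle\subseteq G_0\subsetneq G$ and $G$ is not a reflection group. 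For the converse, assume $K_0=K$. The diagonal reflections give $\{1\}\times H\subseteq\langle R\rangle_0$, which is exactly the kernel of $\pi_1$ restricted to $K\times_\varphi K$, while the products $r_1 r_\eta=(\eta,\eta^{-1})$ show $\pi_1(\langle R\rangle_0)\supseteq\langle L_\varphi\rangle=K$. A subgroup of $K\times_\varphi K$ that contains $\ker\pi_1$ and surjects under $\pi_1$ must be all of $K\times_\varphi K$, so $\langle R\rangle_0=K\times_\varphi K$ and therefore $\langle R\rangle=G$, as required.
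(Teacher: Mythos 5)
Your proposal is correct and follows essentially the same route as the paper: the paper's entire proof is the observation that the reflections in $G(K,H,\varphi)$ are precisely the diagonal matrices with non-identity entry in $H$ and the anti-diagonal involutions $\begin{pmatrix}0&\xi\\\xi^{-1}&0\end{pmatrix}$ with $\xi\in L_\varphi$, which is exactly your classification. You merely spell out the deduction (via the subgroup of matrices with entries in $\langle L_\varphi\rangle$ and the projection/kernel argument) that the paper leaves as "this follows from the observation."
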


\begin{proof}
This follows from the observation that the reflections in 
$G(K,H,\varphi)$ are the matrices
\[
  \begin{pmatrix}1&0\\0&\xi\end{pmatrix},\enspace
  \begin{pmatrix}\xi&0\\0&1\end{pmatrix}\enspace
    1\ne\xi\in H\quad\text{and}\quad
  \begin{pmatrix}0&\xi\\\xi^{-1}&0\end{pmatrix}\enspace\text{for}\ \xi\in L_\varphi.
  \qedhere
\]
\end{proof}

\begin{example}\label{ex:1}
We have $H\subseteq L_\varphi$. If the commutator of $\xi,\eta\in L_\varphi$ is 
in $H$, then $\xi\eta\in L_\varphi$. Therefore, if $K/H$ is abelian, $L_\varphi$ 
is a subgroup of $K$ and in this case $L_\varphi$ generates $K$ if and only 
if~$L_\varphi = K$.

Consequently, if $H$ is a subgroup of index 1 or 2 in a binary polyhedral 
group $K$, it is always the case that $G(K,H,\id)$ is a quaternionic reflection 
group.
\end{example}

\begin{thm}[{Cohen \cite[Theorem (2.2)]{cohen:1980}}]
Suppose that $G$ is an irreducible imprimitive quaternionic reflection group 
acting on $V = \HH^2$ and $\{V_1,V_2\}$ is a system of imprimitivity for $G$.
Then $G$ is a group $G(K,H,\varphi)$ for suitable $K$, $H$ and $\varphi$.  
\end{thm}

Cohen shows that there is a reflection $s\in G$ 
interchanging $V_1$ and $V_2$ and that $e_1\in V_1$ and $e_2\in V_2$ may be chosen
so that the matrix of $s$ with respect to the basis $e_1$, $e_2$ is 
$\begin{pmatrix}0&1\\1&0\end{pmatrix}$. In fact $K$ is the set of $\xi$ such that 
$\begin{pmatrix}\xi&0\\0&\eta\end{pmatrix}\in G$ for some $\eta$ and $H$ is 
the subset of those $\xi\in K$ such that $\eta = 1$. The map $\xi\mapsto \eta$
extends to a well-defined automorphism of $K/H$ of order 1 or~2. Neither $K$, $H$, 
nor $\varphi$ is uniquely determined by~$G$. Indeed, the labelling depends on the 
choice of a system of imprimitivity.

\begin{defn}\label{defn:rho}
Suppose that $\xi\in\sS^3$ normalises both $K$ and $H$. Define $\rho(\xi)$ to
be the automorphism of $K/H$ that sends $H\eta$ to $H\xi\eta\xi^{-1}$.
\end{defn}

\begin{lemma}[{\cite[Lemma 2.4]{cohen:1980}}]\label{lemma:conj}
Suppose $\varphi\in\Aut(K/H)$ and $\varphi^2 = \id$.
\begin{enumerate}[\enspace(i)]
\item If $\xi\in L_\varphi$ and $h = \begin{pmatrix}1&0\\0&\xi\end{pmatrix}$, 
then $hG(K,H,\varphi)h^{-1} = G(K,H,\rho(\xi)\varphi)$.
\item If $\xi\in\sS^3$ normalises $K$ and $H$, then $hG(K,H,\varphi)h^{-1}
  = G(K,H,\rho(\xi)\varphi\rho(\xi)^{-1})$, where 
  $h = \begin{pmatrix}\xi&0\\0&\xi\end{pmatrix}$.\qed
\end{enumerate}
\end{lemma}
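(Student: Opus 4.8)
The plan is to prove both parts by the same recipe and only at the end appeal to a counting argument. Recall from the proof of Lemma~\ref{lemma:gen} that $G(K,H,\varphi)$ is generated by the diagonal matrices $\begin{pmatrix}\alpha&0\\0&\beta\end{pmatrix}$ with $(\alpha,\beta)\in K\times_\varphi K$, i.e.\ $\varphi(H\alpha)=H\beta$, together with the swap $\sigma=\begin{pmatrix}0&1\\1&0\end{pmatrix}$, and that every group of this shape has order $2|K|\,|H|$. Since conjugation by the invertible matrix $h$ is an isomorphism onto its image, $hG(K,H,\varphi)h^{-1}$ is generated by the conjugates of these generators and still has order $2|K|\,|H|$. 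So once I show that each conjugate lies in the proposed target $G(K,H,\psi)$ — which also has order $2|K|\,|H|$ — the inclusion $hG(K,H,\varphi)h^{-1}\subseteq G(K,H,\psi)$ is forced to be an equality. The one preliminary I must not skip is checking that the prescribed $\psi$ is a genuine involution of $K/H$, for otherwise $G(K,H,\psi)$ is undefined; this is exactly where the hypotheses on $\xi$ are spent.

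For (i), write $h=\begin{pmatrix}1&0\\0&\xi\end{pmatrix}$ with $\xi\in L_\varphi$. Because $H\normal K$ and $\xi\in L_\varphi\subseteq K$, the element $\xi$ normalises both $K$ and $H$, so $\rho(\xi)$, and hence $\psi=\rho(\xi)\varphi$, are automorphisms of $K/H$. I would first verify $\psi^2=\id$: expanding $(\rho(\xi)\varphi)^2$ and pushing $\varphi$ past $\rho(\xi)$ with $\varphi^2=\id$ produces a conjugation by $\varphi(H\xi)$, which collapses to the identity precisely because $\varphi(H\xi)=H\xi^{-1}$. Next I would conjugate the two kinds of generator. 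A diagonal generator becomes $\begin{pmatrix}\alpha&0\\0&\xi\beta\xi^{-1}\end{pmatrix}$, and applying $\rho(\xi)$ to $\varphi(H\alpha)=H\beta$ gives $\psi(H\alpha)=H\xi\beta\xi^{-1}$, so it lies in the target. The swap becomes the anti-diagonal matrix $\begin{pmatrix}0&\xi^{-1}\\\xi&0\end{pmatrix}$, whose membership requires $\psi(H\xi^{-1})=H\xi$; this reduces, via $\varphi(H\xi^{-1})=H\xi$ (the defining property of $L_\varphi$) followed by $\rho(\xi)(H\xi)=H\xi$, to an identity. The counting argument then closes part (i).

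For (ii) the matrix $h=\begin{pmatrix}\xi&0\\0&\xi\end{pmatrix}$ is the scalar $\xi I$, with $\xi$ normalising $K$ and $H$ by hypothesis, so $\rho(\xi)$ is defined and $\psi=\rho(\xi)\varphi\rho(\xi)^{-1}$ is visibly an involution since $\varphi^2=\id$. Conjugation by a scalar fixes $\sigma$, so that generator needs no checking, while a diagonal generator becomes $\begin{pmatrix}\xi\alpha\xi^{-1}&0\\0&\xi\beta\xi^{-1}\end{pmatrix}$; unwinding $\psi(H\xi\alpha\xi^{-1})$ through $\rho(\xi)^{-1}$, then $\varphi$, then $\rho(\xi)$ returns $H\xi\beta\xi^{-1}$, placing it in the target. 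The same order comparison gives equality.

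The routine matrix multiplications are not where the difficulty lies. The two load-bearing points are (a) confirming the right-hand automorphism squares to the identity — immediate from $\varphi^2=\id$ in (ii), but in (i) dependent on $\xi\in L_\varphi$ — and (b) checking that the conjugated swap satisfies the target coset condition, which in (i) is once again exactly the statement $\xi\in L_\varphi$. I therefore expect the main obstacle to be tracking the $L_\varphi$ hypothesis cleanly through both checks in part~(i); the scalar case~(ii) is essentially formal.
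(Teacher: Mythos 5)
The paper does not actually prove this lemma: it is imported from Cohen's 1980 paper (Lemma 2.4 there) and stated without proof, so there is no internal argument to compare yours against. Judged on its own terms, your proof is correct and complete, and its structure is the natural one: (a) verify that the proposed automorphism squares to the identity, so that the target group is even defined; (b) conjugate the generators --- the diagonal copy of $K\times_\varphi K$ and the swap --- into the target; (c) use the fact that $|G(K,H,\psi)| = 2|K|\,|H|$ is independent of $\psi$ to promote containment to equality. You also spend the hypothesis $\xi\in L_\varphi$ in exactly the two places where it is needed in part (i): first, $(\rho(\xi)\varphi)^2=\id$ because $\varphi\rho(\xi)\varphi^{-1}$ is conjugation by $\varphi(H\xi)=H\xi^{-1}$, i.e.\ equals $\rho(\xi)^{-1}$; second, the conjugated swap $\begin{pmatrix}0&\xi^{-1}\\ \xi&0\end{pmatrix}$ lies in the target precisely because $\psi(H\xi^{-1}) = \rho(\xi)\bigl(\varphi(H\xi)^{-1}\bigr) = \rho(\xi)(H\xi) = H\xi$. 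Two small cosmetic points: the description of $G(K,H,\varphi)$ as the diagonal subgroup extended by $\sigma$ comes from the construction at the start of \S\ref{sec:imprim2} (the semidirect product $(K\times_\varphi K)\langle\sigma\rangle$), not from the proof of Lemma~\ref{lemma:gen}, which lists only the reflections; and in (ii) it is worth one clause to note that $\xi$ normalising $K$ and $H$ guarantees both that $\rho(\xi)$ is defined (Definition~\ref{defn:rho}) and that the conjugated diagonal entries $\xi\alpha\xi^{-1}$ still lie in $K$, so the target really is a group on the same pair $(K,H)$.
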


\begin{cor}\label{cor:id}
If $\xi\in L_{\rho(\xi)}$, then $G(K,H,\rho(\xi))$ is conjugate to $G(K,H,\id)$.
\end{cor}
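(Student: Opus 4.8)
The plan is to recognise the corollary as the special case $\varphi = \rho(\xi)$ of Lemma~\ref{lemma:conj}(i), so that the conjugating element is $h = \begin{pmatrix}1&0\\0&\xi\end{pmatrix}$ and the resulting automorphism is $\rho(\xi)\varphi = \rho(\xi)^2$. The whole argument then reduces to two checks: that $\rho(\xi)$ is an admissible automorphism for the lemma, i.e.\ $\rho(\xi)^2 = \id$, and that this square is genuinely the identity, so that the right-hand side collapses to $G(K,H,\id)$.

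First I would unpack the hypothesis. The very expression $L_{\rho(\xi)}$ presupposes, through Definition~\ref{defn:rho}, that $\xi$ normalises both $K$ and $H$, so the automorphism $\rho(\xi)$ of $K/H$ is well defined. Evaluating it at the coset $H\xi$ gives $\rho(\xi)(H\xi) = H\xi\xi\xi^{-1} = H\xi$, and hence the membership $\xi\in L_{\rho(\xi)}$ unwinds, via Definition~\ref{defn:L}, to $H\xi = H\xi^{-1}$, that is, to $\xi^2\in H$.

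Next I would verify $\rho(\xi)^2 = \id$. For any $\eta\in K$ one has $\rho(\xi)^2(H\eta) = H\xi^2\eta\xi^{-2}$, and since $\xi^2\in H$ with $H$ normal in $K$ the element $\xi^2\eta\xi^{-2}\eta^{-1} = \xi^2(\eta\xi^{-2}\eta^{-1})$ lies in $H$; thus $\rho(\xi)^2(H\eta) = H\eta$. So $\rho(\xi)$ is an involution of order $1$ or $2$ and satisfies the standing hypothesis of Lemma~\ref{lemma:conj}. Finally I would invoke Lemma~\ref{lemma:conj}(i) with $\varphi = \rho(\xi)$: since $\xi\in L_{\rho(\xi)}$ by assumption, conjugation by $h = \begin{pmatrix}1&0\\0&\xi\end{pmatrix}$ gives $hG(K,H,\rho(\xi))h^{-1} = G(K,H,\rho(\xi)\rho(\xi)) = G(K,H,\rho(\xi)^2) = G(K,H,\id)$, which is the claim.

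I expect no serious obstacle. The only point demanding care is the computation in the third step, where one must use \emph{both} $\xi^2\in H$ and the normality of $H$ in $K$ to conclude that $\rho(\xi)$ squares to the identity; without that verification the cited lemma would not strictly apply, since its hypothesis $\varphi^2 = \id$ would be unchecked for $\varphi = \rho(\xi)$.
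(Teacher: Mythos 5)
Your proof is correct and is precisely the argument the paper intends: the corollary is stated without proof as an immediate consequence of Lemma~\ref{lemma:conj}(i), applied with $\varphi = \rho(\xi)$ so that the conjugate group is $G(K,H,\rho(\xi)^2)$. Your extra verification that $\xi\in L_{\rho(\xi)}$ forces $\xi^2\in H$ and hence (by normality of $H$) $\rho(\xi)^2=\id$ supplies the one detail the paper leaves implicit, namely that the standing hypothesis $\varphi^2=\id$ of the lemma is satisfied.
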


\subsection{The cyclic family.}
\begin{thm}[{\cite[(2.5)]{cohen:1980}}]\label{thm:cyclic}
Suppose that $G(\mathcal C_m,H,\varphi)$ is an imprimitive quaternionic 
reflection group. Then $H = \mathcal C_\ell$ for a divisor 
$\ell$ of $m$ and $\varphi(H\xi) = H\xi^{-1}$. Furthermore, 
$G(\mathcal C_m,\mathcal C_\ell,\varphi)\simeq \ST(m,m/\ell,2)$.
\end{thm}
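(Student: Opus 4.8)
The plan is to handle the three assertions in turn, the first two being quick structural facts and the third an explicit identification of matrix groups, where the real bookkeeping lies. For the first assertion, $H$ is a subgroup of the cyclic group $\mathcal C_m$, hence cyclic and (since $\mathcal C_m$ is abelian) normal; writing $\ell = |H|$ we obtain $\ell \mid m$ and $H = \langle\zeta_m^{m/\ell}\rangle = \mathcal C_\ell$.

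Next I would determine $\varphi$ from the hypothesis that $G = G(\mathcal C_m, H, \varphi)$ is a reflection group. By Lemma~\ref{lemma:gen} this forces $L_\varphi$ to generate $\mathcal C_m$. The quotient $\mathcal C_m/H$ is cyclic, hence abelian, so Example~\ref{ex:1} shows that $L_\varphi$ is a subgroup and that generating $\mathcal C_m$ is equivalent to $L_\varphi = \mathcal C_m$. By Definition~\ref{defn:L}, the equality $L_\varphi = \mathcal C_m$ says exactly that $\varphi(H\xi) = H\xi^{-1}$ for every $\xi\in\mathcal C_m$; that is, $\varphi$ is the inversion automorphism of $\mathcal C_m/H$ (which automatically satisfies $\varphi^2 = \id$). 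This proves the second assertion and shows $\varphi$ is uniquely determined.

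For the isomorphism I would realise $G(\mathcal C_m, \mathcal C_\ell, \varphi)$ as a group of complex monomial matrices and compare it with the definition of $G(m,m/\ell,2)$. Since $\mathcal C_m \subseteq \C$, every matrix produced by the construction has entries in $\mathcal C_m \cup \{0\} \subseteq \C$, so the group is of complex type. With $\varphi$ equal to inversion the equaliser condition $\varphi(H\xi) = H\eta$ reads $\xi\eta\in H$, and therefore the elements of $G(\mathcal C_m,\mathcal C_\ell,\varphi)$ are precisely the matrices $\begin{pmatrix}\xi&0\\0&\eta\end{pmatrix}$ and $\begin{pmatrix}0&\xi\\\eta&0\end{pmatrix}$ with $\xi,\eta\in\mathcal C_m$ and $\xi\eta\in\mathcal C_\ell$. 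Identifying $\mathcal C_m$ with the group $\mu_m$ of $m$-th roots of unity through $\zeta_m = e^{2\pi i/m}$, and writing $p = m/\ell$ so that $\mathcal C_\ell$ corresponds to $\mu_\ell = \mu_{m/p}$, this is exactly the standard monomial description of $G(m,m/\ell,2)$: all $2\times 2$ monomial matrices whose nonzero entries lie in $\mu_m$ and whose product lies in $\mu_{m/p}$. Hence the two subgroups of $\U_2(\HH)$ coincide, giving $G(\mathcal C_m,\mathcal C_\ell,\varphi) \simeq \ST(m,m/\ell,2)$.

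The only delicate point is the matching in the last paragraph: one must check that the coset condition $\xi\eta\in H$ agrees with the ``product in $\mu_{m/p}$'' condition for the correct value $p = m/\ell$, i.e. that the admissible products form exactly the subgroup $H$. A convenient sanity check is the order: $|G(\mathcal C_m,\mathcal C_\ell,\varphi)| = 2|\mathcal C_m|\,|\mathcal C_\ell| = 2m\ell$, which agrees with $|G(m,m/\ell,2)| = m^2\cdot 2/(m/\ell) = 2m\ell$. Everything else is routine.
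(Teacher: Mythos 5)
Your proof is correct, and for the first two assertions it is in substance identical to the paper's: the paper also notes $H = \mathcal C_\ell$ is forced, then uses the abelianness of $\mathcal C_m$ (via Lemma~\ref{lemma:gen} and Example~\ref{ex:1}) to conclude $L_\varphi = \mathcal C_m$, which pins $\varphi$ down as inversion. Where you diverge is the final identification. The paper exhibits three explicit generators $g_1 = \begin{pmatrix}0&1\\1&0\end{pmatrix}$, $g_2 = \begin{pmatrix}\zeta_m&0\\0&\zeta_m^{-1}\end{pmatrix}$, $g_3 = \begin{pmatrix}\zeta_m^n&0\\0&1\end{pmatrix}$ and matches $g_1$, $g_1g_2$, $g_3$ against the generating set for $\ST(m,n,2)$ given in Lehrer--Taylor, Ch.~2, \S7; you instead describe the entire element set of $G(\mathcal C_m,\mathcal C_\ell,\varphi)$ --- monomial matrices with entries in $\mu_m$ whose product of nonzero entries lies in $\mu_\ell$ --- and observe that this is precisely the defining monomial description of $G(m,m/\ell,2)$. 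Your route is more self-contained (it needs only the definition of $G(m,p,n)$, not a specific presentation from the literature) and yields the slightly stronger conclusion that the two groups coincide as subgroups of $\U_2(\HH)$ under the complex-type embedding, rather than merely being conjugate or abstractly isomorphic; the paper's route is shorter if one takes the cited generator description as given. Your verification that the equaliser condition $\varphi(H\xi)=H\eta$ unwinds to $\xi\eta\in H$, and the order check $2m\ell = m^2\cdot 2/(m/\ell)$, close the only points where the matching could have gone wrong.
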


\begin{proof}
Certainly $H = \mathcal C_\ell$ where $m = n\ell$. Since $\mathcal C_m$ is 
abelian, $L_\varphi = \mathcal C_m$ and the only choice for $\varphi$ is 
$\varphi(H\xi) = H\xi^{-1}$ for all $\xi\in\mathcal C_m$. Consequently, 
the matrices
\[
  g_1 = \begin{pmatrix}0&1\\1&0\end{pmatrix},\quad
  g_2 = \begin{pmatrix}\zeta_m&0\\0&\zeta_m^{-1}\end{pmatrix},\quad
  g_3 = \begin{pmatrix}\zeta_m^n&0\\0&1\end{pmatrix}
\]
generate $G(\mathcal C_m,\mathcal C_\ell,\varphi)$.  Then $g_1$, $g_1g_2$ 
and $g_3$ are generators of $\ST(m,n,2)$ described in 
\cite[Ch.\ 2, \S7]{lehrer-taylor:2009} and thus
$G(\mathcal C_m,\mathcal C_\ell,\varphi)\simeq\ST(m,n,2)$.
\end{proof}

The Shephard and Todd group $\ST(2m,m,2)$ for $m > 2$, regarded as a complex 
reflection group, has only one system of imprimitivity whereas, as can be
seen in \S\ref{sec:imprim}, Table \ref{tbl:3}, when represented as a quaternionic 
reflection group of complex type, it has infinitely many. 

\subsection{The binary dihedral family $G(\mathcal D_m,H,\varphi)$.}
As an abstract group the binary dihedral group $\mathcal D_m$ has a presentation
\begin{equation}\label{eq:K}
  K = \( x, y\mid x^m = y^2 = (xy)^2 \).
\end{equation}
It follows that $y^{-1}xy = x^{-1}$, hence $x^m = y^{-1}y^2y = y^{-1}x^my = x^{-m}$ 
and consequently $z^2 = 1$ where $z = x^m = y^2$. Therefore $(x^hy)^2 = z$ for 
all $h$ and the elements of $K$ are $x^r$ and $x^ry$ for $0\le r < 2m$. There is
an isomorphism between $K$ and $\mathcal D_m$ mapping $x$ to $\zeta_{2m}$, $y$ 
to $j$ and $z$ to $-1$.

\subsubsection*{Case 1: $H$ cyclic.}
Suppose that $H$ is a cyclic normal subgroup of $K$. The subgroup $\(x\)$ is 
normal in $K$ and for $m\ne 2$ it is characteristic. For $m > 2$,
$H\subseteq \(x\)$ and so $H = \(x^n\)\simeq\mathcal C_\ell$ where $2m = n\ell$.
If $m = 2$, $K$ is the quaternion group of order 8 and conjugacy in $\mathcal T$ 
shows that we may assume $H\subseteq \(x\)$.

If $\ell$ is odd, then $n$ is even and we claim that $K/H\simeq\mathcal D_{m_0}$,
where $m_0 = n/2$. Certainly $(Hy)^{-1}(Hx)(Hy) = Hx^{-1}$ and since $\ell$ is odd, 
$y^2\notin H$. We need to show that $(Hx)^{m_0} = (Hy)^2$. Indeed,
$y^{-2}x^{m_0} = x^{-m}x^{m_0} = x^{n(\ell-1)/2}\in H$, as required.
(Note that if $m$ is odd, $K = \mathcal D_m$ and $H = \mathcal C_m$, then 
$K/H\simeq\mathcal D_1\simeq\mathcal C_4$.) 

If $\ell$ is even, then $z\in H$ and a similar calculation shows that 
$K/H\simeq\D_n$, the dihedral group of order $2n$. (Recall that 
according to the conventions of \S\ref{sec:binpol},
$\D_1\simeq\mathcal C_2$ and $\D_2\simeq\mathcal C_2\times\mathcal C_2$.)

For $r \ge 1$, $s\ge 0$, the map $K\to K : x^ay^b\mapsto x^{ar+bs}y^b$ for 
$0\le a < 2m$, $b \in\{0,1\}$ preserves $H$ and induces an automorphism 
$\varphi_{r,s}$ of $K/H$, which has order 1 or 2 if and only if
\[
  \gcd(r,n) = 1,\enspace n\mid r^2-1\text{ and }n\mid s(r+1). 
\]
In fact the maps $\varphi_{r,s}$ account for all the automorphisms of
$K/H$ of order 1 or 2.

By definition, $\xi\in K$ belongs to $L_{\varphi_{r,s}}$ if and only if 
$\varphi_{r,s}(H\xi) = H\xi^{-1}$ and so $L_{\varphi_{r,s}} = L'\dotcup L''$, 
where
\begin{equation}\label{eq:LL}
  L' = \bigcup_{\eta\in H}\,\{\,\eta x^h\mid x^{h(r+1)}\in H\,\}
  \enspace\text{and}\enspace
  L'' = \bigcup_{\eta\in H}\,\{\,\eta x^h y\mid x^{h(r-1)+s+m}\in H\,\}.
\end{equation}

\begin{lemma}\label{lemma:L}
We have $H\subseteq L' = \(x^\kappa\)\normal K$, where $\kappa = n/\gcd(n,r+1)$.
\end{lemma}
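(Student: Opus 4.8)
The plan is to unwind the defining condition on $L'$ in \eqref{eq:LL} into a divisibility statement and then identify the resulting set explicitly. First I would recall that $x$ has order $2m$ and that $H = \langle x^n\rangle$, so that $x^t\in H$ precisely when $n\mid t$ (here using $n\mid 2m$, which holds since $2m = n\ell$). Applying this to the membership condition $x^{h(r+1)}\in H$, the inner set in \eqref{eq:LL} becomes $\{\,x^h\mid n\mid h(r+1)\,\}$. A routine divisibility argument then shows that $n\mid h(r+1)$ if and only if $\kappa\mid h$, where $\kappa = n/\gcd(n,r+1)$, so that $\{\,x^h\mid n\mid h(r+1)\,\} = \langle x^\kappa\rangle$.

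Next I would dispose of the union over $\eta\in H$ appearing in \eqref{eq:LL}. Since $\kappa$ divides $n$, we have $x^n\in\langle x^\kappa\rangle$, and hence $H\subseteq\langle x^\kappa\rangle$. Consequently every product $\eta x^h$ with $\eta\in H$ and $\kappa\mid h$ again lies in $\langle x^\kappa\rangle$, while conversely every power $x^{k\kappa}$ already occurs with $\eta = 1$. This gives $L' = \langle x^\kappa\rangle$ and, in particular, the inclusion $H\subseteq L'$.

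Finally, for normality I would note that $\langle x\rangle\normal K$ and that the relation $y^{-1}xy = x^{-1}$ derived from \eqref{eq:K} yields $y^{-1}x^\kappa y = x^{-\kappa}\in\langle x^\kappa\rangle$; since $x$ centralises $\langle x^\kappa\rangle$, the subgroup is invariant under both generators of $K$ and is therefore normal, establishing $\langle x^\kappa\rangle\normal K$.

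There is no serious obstacle here. The only points requiring care are the bookkeeping of exponents modulo $2m$ versus modulo $n$, and the verification that enlarging $\langle x^\kappa\rangle$ by the factor $H$ in \eqref{eq:LL} produces nothing new --- both of which reduce to the single observation that $\kappa\mid n$.
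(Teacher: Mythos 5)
Your proof is correct and takes essentially the same route as the paper's: both identify $L'$ with $\langle x^\kappa\rangle$ by reducing the membership condition $x^{h(r+1)}\in H$ to gcd arithmetic giving $\kappa = n/\gcd(n,r+1)$, and then absorb the factor $H$ using $\kappa\mid n$. The only cosmetic differences are that the paper first observes $L'$ is a subgroup and pins down $\kappa$ via an explicit B\'ezout identity, whereas you use the divisibility equivalence $n\mid h(r+1)\iff\kappa\mid h$ directly and also spell out the (immediate) normality check that the paper leaves implicit.
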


\begin{proof}
The elements of $L'$ commute and therefore $L'$ is a subgroup of $\(x\)$.
Thus $L' = H\(x^\kappa\)$, where $\kappa$ is the smallest divisor of $n$
such that $x^{\kappa(r+1)}\in H$. Write $\gcd(n,r+1) = an + b(r+1)$ for 
some $a,b$. Suppose that $h\mid n$ and $x^{h(r+1)}\in H$. Then
$h\gcd(n,r+1) = ahn + bh(r+1)$, whence $h\ge n/\gcd(n,r+1)$ since
$n\mid h(r+1)$. Therefore $\kappa = n/\gcd(n,r+1)$ and so $L' = H\(x^\kappa\) = 
\(x^n\)\(x^\kappa\) = \(x^{\gcd(n,\kappa)}\) = \(x^\kappa\)$.
\end{proof}

\begin{lemma}
Given $K = \(x,y\)$ as in \eqref{eq:K} and a divisor $n$ of $2m$, let
$H = \(x^n\)$. If $G(K,H,\varphi_{r,s})$ is a quaternionic reflection 
group, then up to conjugacy we may suppose that $s = m$ and either 
$r = 1$ or $2r \le n$.
\end{lemma}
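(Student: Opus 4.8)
The plan is to convert the two conjugations of Lemma~\ref{lemma:conj} into explicit operations on the pair $(r,s)$, and then to run these operations until $(r,s)$ reaches the stated normal form. Writing $\varphi_{r,s}$ as the map $Hx\mapsto Hx^{r}$, $Hy\mapsto Hx^{s}y$ of $K/H$, I would first record the values of $\rho$ (Definition~\ref{defn:rho}) that occur: $\rho(x^{t})$ fixes $x$ and sends $y\mapsto x^{2t}y$; $\rho(x^{h}y)$ sends $x\mapsto x^{-1}$, $y\mapsto x^{2h}y$; and $\rho(y)$ sends $x\mapsto x^{-1}$, $y\mapsto y$. Feeding these into Lemma~\ref{lemma:conj} yields four moves on $(r,s)\pmod n$: conjugating by $\mathrm{diag}(1,x^{t})$ with $x^{t}\in L'$ gives $(r,s)\mapsto(r,s+2t)$; by $\mathrm{diag}(1,x^{h}y)$ with $x^{h}y\in L''$ gives $(r,s)\mapsto(-r,2h-s)$; by $\mathrm{diag}(x^{t},x^{t})$ gives $(r,s)\mapsto(r,s+2t(1-r))$; and by $\mathrm{diag}(y,y)$ gives $(r,s)\mapsto(r,-s)$. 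By Lemma~\ref{lemma:L} the first move is available precisely for $t\in\kappa\Z$ with $\kappa=n/\gcd(n,r+1)$, whereas the third and fourth are available unconditionally.

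Next I would normalise $s$ to $m$, allowing $r$ to flip freely in the process since the reduction of $r$ comes afterwards. Two facts drive this. First, $n\mid 2m$ gives $m\equiv-m\pmod n$, so the target $m$ is unchanged by the sign change $s\mapsto-s$. Second, Lemma~\ref{lemma:gen} says $L_{\varphi}$ generates $K$; unwinding \eqref{eq:LL} and Lemma~\ref{lemma:L}, this forces $L''\neq\emptyset$ --- whence $\gcd(n,r-1)\mid s+m$, and so, using $m\equiv-m$, $s\equiv m\pmod{\gcd(n,r-1)}$ --- and also a coprimality condition relating $m$ to $n/\gcd(n,r-1)$ and $n/\gcd(n,r+1)$. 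The available $s$-shifts, taken over both $r$ and $-r$, generate the subgroup of $\Z/n$ spanned by $2(r-1)$, $2(r+1)$, and $2n/\gcd(n,r\pm1)$, together with negation; the claim is that $s-m$, already a multiple of $\gcd(n,r-1)$, lies in this subgroup exactly when the generation condition holds. Granting this, $s$ can be driven to $m$.

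Finally I would reduce $r$. Once $s=m$ the element $y=x^{0}y$ lies in $L''$, because $n\mid 0\cdot(r-1)+m+m=2m$; hence the second move with $h=0$ is legitimate and sends $\varphi_{r,m}$ to $\varphi_{-r,-m}=\varphi_{-r,m}$, again by $-m\equiv m$. Thus $r$ may be exchanged with $n-r$ while keeping $s=m$, and one of $r,\,n-r$ satisfies $2r\le n$ (the residue class of $\pm1$ being represented by $r=1$), which gives the dichotomy. The main obstacle is the middle step: the hypothesis of Lemma~\ref{lemma:gen} enters not merely as $L''\neq\emptyset$ but through its full coprimality content, and the heart of the argument is the $2$-adic gcd computation showing that this content is precisely what places $s-m$ in the lattice of reachable shifts. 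Away from $2$ the odd prime factors fall into line from $n\mid(r-1)(r+1)$, but a stubborn factor of two can survive, and only the generation condition eliminates it.
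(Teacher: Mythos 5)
Your dictionary of moves is accurate (each of the four formulas checks out, as does the availability condition $t\in\kappa\Z$ for the first), and your endgame is fine: once $s=m$ we have $y\in L''$ because $n\mid 2m$, and the flip $(r,m)\mapsto(-r,-m)=(n-r,m)$ then yields $r=1$ or $2r\le n$. The genuine gap is exactly the step you flag yourself: the assertion that the generation hypothesis places $s-m$ in the lattice of reachable shifts is stated (``the claim is that\dots'', ``granting this\dots'') but never proved, and it is the entire substance of the lemma. A proof whose self-identified heart is deferred to an unexecuted ``$2$-adic gcd computation'' is not a proof. For the record, the claim is true, and in a stronger form than you need: the order-two condition $n\mid s(r+1)$ says precisely that $\kappa\mid s$, and $L''\ne\emptyset$ says precisely that $\gcd(n,r-1)\mid s+m$; since $\gcd(\kappa,\nu)=1$ forces $r\equiv\pm1$ modulo every prime power $p^e$ dividing $n$, a prime-by-prime check shows that $s\equiv m$ modulo the subgroup $\langle 2(r-1),\,2\kappa\rangle$ of $\Z/n$ generated by your $r$-preserving shifts alone --- at primes where $r\equiv 1\pmod{p^e}$ the two conditions above already pin $s\equiv m\pmod{p^e}$ (using $2m\equiv 0$), while at primes where $r\equiv -1\pmod{p^e}$ the shift $2(r-1)$ (for $p$ odd) or $2\kappa$ (for $p=2$) moves $s$ freely within the congruence class that $s$ and $m$ share. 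So negation and the $r$-flip are not even needed for this step; but someone has to carry out this computation, and your proposal does not.

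You should also see how the paper sidesteps the lattice analysis entirely, because the device it uses is the one conspicuously absent from your toolkit: Lemma~\ref{lemma:conj}\,(ii) permits conjugation by \emph{any} $\xi\in\sS^3$ normalising $K$ and $H$, not only by elements of $K$. The paper first reduces $r$ (your second move, with any $x^hy\in L''$), and then embeds $K$ in $\widetilde K=\langle w,y\rangle\simeq\mathcal D_{2m}$ with $w^2=x$. Since $\rho(w)$ fixes $x$ and sends $y$ to $xy$, conjugation by $\mathrm{diag}(w^hy,\,w^hy)$ realises $s\mapsto 2m-h(r-1)-s$, a shift by \emph{odd} as well as even multiples of $r-1$, which your $K$-internal moves can never produce. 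Taking for $h$ the very exponent witnessing $x^hy\in L''$, i.e.\ $n\mid h(r-1)+s+m$, this lands on $\varphi_{r,m}$ in a single step. Your restriction to conjugating by elements of $K$ itself is precisely what halves the available shifts and creates the ``stubborn factor of two''; the square root $w$ of $x$ is what dissolves it.
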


\begin{proof}
If $n = 1$, the only automorphism of $K/H$ is the identity, which can be 
written as $\varphi_{1,m}$ since $x^m\in H$. Thus we may suppose that $1\le r < n$.
By Lemma \ref{lemma:gen}, $L_{\varphi_{r,s}}$ generates $K$ and it follows from 
Lemma~\ref{lemma:L} that $L''\ne\emptyset$. For $x^hy\in L''$ a straightforward 
calculation shows that $\rho(x^hy)\varphi_{r,s} = \varphi_{2m-r,2m+2h-s}$.  Since 
$x^{2m-r}\equiv x^{n-r}\pmod H$, the map $\rho(x^hy)\varphi_{r,s}$ is the
automorphism $\varphi_{n-r,n+2h-s}$ of $K/H$. It follows from 
Lemma~\ref{lemma:conj}\,(i) that up to conjugacy we may suppose $1\le r\le n/2$.

To carry out the next step, embed $K$ in 
\[
  \widetilde K = \( w, y\mid w^{2m} = y^2 = (wy)^2 \) \simeq \mathcal D_{2m}
\]
by defining $x$ to be $w^2$. Then $w$ normalises both $K$ and $H$ and by
direct calculation we have 
$\rho(w^hy)\varphi_{r,s}\rho(w^hy)^{-1} = \varphi_{r,2m-h(r-1)-s}$,
If $x^hy\in L''$, then $x^{h(r-1)+s+m}\in H$ whence 
$\varphi_{r,2m-h(r-1)-s} = \varphi_{r,m}$.  It follows from 
Lemma~\ref{lemma:conj}\,(ii) that $G(K,H,\varphi_{r,s})$ is conjugate to 
$G(K,H,\varphi_{r,m})$ in~$\U_2(\HH)$.
\end{proof}

\begin{lemma}
If $L''$ is the subset of $K$ associated with $G(K,H,\varphi_{r,m})$
as in \eqref{eq:LL}, then $L'' = \(x^\nu\)y$ where $\nu = n/\gcd(n,r-1)$.
\end{lemma}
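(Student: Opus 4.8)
The plan is to mimic the proof of Lemma~\ref{lemma:L}, working now with the coset $x^h y$ and the specific automorphism $\varphi_{r,m}$. From the description of $L''$ in \eqref{eq:LL}, an element $\eta x^h y$ with $\eta\in H$ belongs to $L''$ precisely when $x^{h(r-1)+s+m}\in H$. First I would put $s = m$, so that the exponent becomes $h(r-1)+2m$; since $x$ has order $2m$ we have $x^{2m}=1$, and the membership condition collapses to $x^{h(r-1)}\in H$.

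Next I would convert this membership into a divisibility condition on $h$. Since $H = \(x^n\)$ with $n\mid 2m$, a power $x^a$ lies in $H$ if and only if $n\mid a$, a condition that is well defined on the residue of $a$ modulo $2m$ precisely because $n\mid 2m$. Thus $x^{h(r-1)}\in H$ is equivalent to $n\mid h(r-1)$, and dividing through by $\gcd(n,r-1)$ this becomes $\nu\mid h$ with $\nu = n/\gcd(n,r-1)$. This is exactly the arithmetic used in Lemma~\ref{lemma:L}, with $r-1$ in the role of $r+1$.

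Finally I would absorb the union over $\eta\in H$. Writing $\eta = x^{kn}$ we have $\eta x^h y = x^{kn+h}y$, so $L''$ consists of all $x^{kn+h}y$ with $\nu\mid h$ and $k$ arbitrary. The step where care is needed---and essentially the only obstacle---is to verify that this union does not enlarge the set of admissible exponents beyond the multiples of $\nu$. This rests on the observation that $\nu\mid n$: from $n = \nu\gcd(n,r-1)$ every multiple of $n$ is already a multiple of $\nu$, so the exponents $kn+h$ sweep out exactly the multiples of $\nu$ modulo $2m$, neither more nor fewer. Since $\nu\mid n\mid 2m$, these residues are precisely the powers appearing in $\(x^\nu\)$, and hence $L'' = \(x^\nu\)y$, as required.
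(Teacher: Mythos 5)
Your proposal is correct and follows essentially the same route as the paper: set $s=m$ so the membership condition reduces to $x^{h(r-1)}\in H$, identify the admissible exponents as the multiples of $\nu = n/\gcd(n,r-1)$, and absorb the union over $H$ using $\nu\mid n$, which is exactly the paper's ``adaptation of the proof of Lemma~\ref{lemma:L}'' ending in $H\(x^\nu\)y = \(x^\nu\)y$. The only cosmetic difference is that you derive $\nu$ by dividing the divisibility $n\mid h(r-1)$ through by $\gcd(n,r-1)$, where the paper's Lemma~\ref{lemma:L} phrases the same computation via a smallest-divisor characterisation and B\'ezout's identity.
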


\begin{proof}
For $\eta\in H$ it follows from \eqref{eq:LL} with $s = m$ that $\eta x^hy\in L''$
if and only if $x^{h(r-1)}\in H$. Therefore $L'' = H\(x^\nu\)y$, where
$\nu$ is the smallest divisor of $n$ such that $x^{\nu(r-1)}\in H$. An
adaptation of the proof of Lemma~\ref{lemma:L} shows that $\nu = n/\gcd(n,r-1)$
and $L'' = H\(x^\nu\)y = \(x^n\)\(x^\nu\)y = \(x^{\gcd(n,\nu)}\)y = \(x^\nu\)y$.
\end{proof}

\begin{cor}
$L_{\varphi_{r,m}}$ generates $K$ if and only if $\gcd(\kappa,\nu) = 1$.
\end{cor}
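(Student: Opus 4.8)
The plan is to compute the subgroup $\(L_{\varphi_{r,m}}\)$ generated by $L_{\varphi_{r,m}} = L'\dotcup L''$ directly from the two structural descriptions just obtained, namely $L' = \(x^\kappa\)$ and $L'' = \(x^\nu\)y$, and then to compare this subgroup with $K = \(x,y\)$. Since $K$ is generated by $x$ and $y$, and $\(x\)$ is cyclic of order $2m$, everything reduces to identifying the largest power of $x$ contained in the generated subgroup.

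First I would extract explicit generators from $L''$. Because $y\in L''$ (as $1\in\(x^\nu\)$) and $x^\nu y\in L''$, the product $(x^\nu y)y^{-1} = x^\nu$ lies in $\(L''\)$, so $\(L''\) = \(x^\nu,y\)$. Combining this with $L' = \(x^\kappa\)$ and using $\(x^\kappa,x^\nu\) = \(x^{\gcd(\kappa,\nu)}\)$ in the cyclic group $\(x\)$ gives
\[
  \(L_{\varphi_{r,m}}\) = \(x^\kappa, x^\nu, y\) = \(x^{\gcd(\kappa,\nu)}, y\).
\]

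The delicate point is the relation $y^2 = z = x^m$, which feeds an extra power of $x$ into the subgroup and could in principle make $\(x^{\gcd(\kappa,\nu)},y\)\cap\(x\)$ strictly larger than $\(x^{\gcd(\kappa,\nu)}\)$. To rule this out I would show $\gcd(\kappa,\nu)\mid m$. Here I use that $\varphi_{r,m}$ is a genuine automorphism of order at most $2$, so the congruence $n\mid m(r+1)$ holds; combined with $n\mid 2m$ this also yields $n\mid m(r-1)$. Writing $d_+ = \gcd(n,r+1)$ so that $\kappa = n/d_+$ and $\gcd(\kappa,(r+1)/d_+) = 1$, the divisibility $n\mid m(r+1)$ forces $\kappa\mid m$, and symmetrically $\nu\mid m$. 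In particular $\gcd(\kappa,\nu)\mid m$, so $x^m\in\(x^{\gcd(\kappa,\nu)}\)$ and the relation $y^2 = x^m$ contributes nothing new. Consequently $\(L_{\varphi_{r,m}}\)\cap\(x\) = \(x^{\gcd(\kappa,\nu)}\)$.

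Finally, since $x$ has order $2m$ and $\gcd(\kappa,\nu)\mid m\mid 2m$, the cyclic subgroup $\(x^{\gcd(\kappa,\nu)}\)$ equals $\(x\)$ precisely when $\gcd(\kappa,\nu) = 1$. As $y$ already lies in $\(L_{\varphi_{r,m}}\)$, this is exactly the condition for $\(L_{\varphi_{r,m}}\) = \(x,y\) = K$. The main obstacle is the middle step: the clean statement $\gcd(\kappa,\nu) = 1$, rather than the weaker $\gcd(\kappa,\nu,m) = 1$ that the relation $y^2 = x^m$ would otherwise produce, depends entirely on the divisibility $\gcd(\kappa,\nu)\mid m$, and establishing this is where the automorphism constraints $n\mid m(r+1)$ and $n\mid 2m$ must be invoked.
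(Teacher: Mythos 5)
Your proof is correct, and it is actually more complete than the paper's own argument. The paper's proof is a single line: the subgroup generated by $L_{\varphi_{r,m}}$ contains $L'L'' = \(x^\kappa\)\(x^\nu\)y = \(x^{\gcd(\kappa,\nu)}\)y$, which immediately gives the ``if'' direction (when $\gcd(\kappa,\nu)=1$ this coset contains $y$ and $xy$, hence the subgroup is all of $K$), but it leaves the ``only if'' direction implicit. You identified exactly the point on which that converse depends: the subgroup generated is $\(x^{g},y\)$ with $g = \gcd(\kappa,\nu)$, and because of the relation $y^2 = x^m$ this subgroup meets $\(x\)$ in $\(x^{\gcd(g,m)}\)$, so it could equal $K$ even with $g>1$ if $g$ and $m$ were coprime --- for instance $\(x^2,y\) = K$ whenever $m$ is odd. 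Your derivation of $\kappa\mid m$ and $\nu\mid m$ from the conditions $n\mid m(r+1)$ and $n\mid 2m$ that make $\varphi_{r,m}$ an automorphism of order at most $2$ is precisely what rules this out, and it is the justification that the paper's one-line proof silently assumes. Apart from this, the two arguments perform the same computation (reducing everything to $\(x^{\gcd(\kappa,\nu)},y\)$ via the structure of $L'$ and $L''$), so what your write-up buys is rigour in the converse direction; that rigour is genuinely needed, since without the divisibility $g\mid m$ the stated equivalence would be false.
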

\begin{proof}
The subgroup generated by $L_{\varphi_{r,m}}$ contains 
$L'L'' = \(x^\kappa\)\(x^\nu\)y = \(x^{\gcd(\kappa,\nu)}\)y$.
\end{proof}

\begin{defn}
Let $\psi_r$ denote the automorphism $\varphi_{r,m}$ of $K/H$. That is,
$\psi_r(Hx) = Hx^r$ and $\psi_r(Hy) = Hy^{-1}$.
\end{defn}
The results of this section for cyclic $H$ are summarised in the following theorem.

\begin{thm}\label{thm:A}
Suppose that $G = G(K,H,\varphi)$ where $K$ is a binary dihedral group of order
$4m$ and $H$ is a cyclic subgroup of order $\ell$ such that $2m = n\ell$.
Then $G$ is a quaternionic reflection group if and only if it is 
conjugate in $\U_2(\HH)$ to $G(\mathcal D_m,\mathcal C_\ell,\psi_r)$ with $r = 1$ or 
$1 < r \le n/2$ and $\gcd(r,n) = \gcd(\kappa,\nu) = 1$, where 
$\kappa = n/\gcd(n,r+1)$ and $\nu = n/\gcd(n,r-1)$.

As generators we may take
\begin{equation}\label{eq:std-gens}
  \begin{pmatrix}0&1\\1&0\end{pmatrix},\quad
  \begin{pmatrix}x^n&0\\0&1\end{pmatrix},\quad
  \begin{pmatrix}x&0\\0&x^r\end{pmatrix},\quad
  \begin{pmatrix}y&0\\0&y^{-1}\end{pmatrix}.
\end{equation}
\end{thm}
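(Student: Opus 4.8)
The plan is to assemble Theorem~\ref{thm:A} directly from the sequence of lemmas just proved, treating it as a summary rather than a fresh argument. The logical skeleton is: (1) reduce the classification of cyclic normal subgroups $H$ and admissible automorphisms $\varphi$ to the standard form $\varphi_{r,s}$; (2) normalise the parameters $s$ and $r$ by conjugacy; and (3) translate the reflection-group condition from Lemma~\ref{lemma:gen} into the arithmetic conditions on $r$, $n$, $\kappa$, and $\nu$.

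First I would observe that the preceding discussion already establishes, for $m>2$, that any cyclic normal $H$ equals $\(x^n\)\simeq\mathcal C_\ell$ with $2m=n\ell$, and that every order-$1$-or-$2$ automorphism of $K/H$ arises as some $\varphi_{r,s}$ subject to $\gcd(r,n)=1$, $n\mid r^2-1$, and $n\mid s(r+1)$. The second lemma (the one embedding $K$ in $\widetilde K\simeq\mathcal D_{2m}$) then shows that, up to conjugacy in $\U_2(\HH)$, we may take $s=m$, so that $\varphi_{r,s}=\psi_r$, and that the range of $r$ may be cut down to $r=1$ or $1<r\le n/2$. This is exactly the normalisation claimed. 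I would flag that the hypothesis $G(K,H,\varphi)$ is a reflection group is used at this stage precisely to guarantee $L''\ne\emptyset$ (via Lemma~\ref{lemma:gen} and Lemma~\ref{lemma:L}), which is what makes the $\rho(x^hy)$- and $\rho(w^hy)$-conjugations available.

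Next I would invoke Lemma~\ref{lemma:gen}, which says $G(\mathcal D_m,\mathcal C_\ell,\psi_r)$ is a reflection group exactly when $L_{\psi_r}$ generates $K$, and combine it with the corollary stating that $L_{\psi_r}$ generates $K$ if and only if $\gcd(\kappa,\nu)=1$, where $\kappa=n/\gcd(n,r+1)$ and $\nu=n/\gcd(n,r-1)$. Together with the constraint $\gcd(r,n)=1$ inherited from the admissibility of $\varphi_{r,s}$, these are precisely the stated conditions, so the equivalence in the theorem follows by concatenation. Finally, for the explicit generators \eqref{eq:std-gens}, I would note that the construction of $G(K,H,\varphi)$ gives the reflection $\begin{pmatrix}0&1\\1&0\end{pmatrix}$ together with the diagonal matrices coming from generators of $K\times_{\psi_r}K$: the images of $z=x^m$, of $x$, and of $y$ under $\xi\mapsto(\xi,\eta)$ with $\psi_r(H\xi)=H\eta$. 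Since $\psi_r(Hx)=Hx^r$ and $\psi_r(Hy)=Hy^{-1}$, these yield the three diagonal matrices displayed, and I would only need to check that these generate the full group $G$ together with the swap—a routine verification that $\langle z,x,y\rangle=K$ and that the fibre condition is met.

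The main obstacle, such as it is, is bookkeeping rather than mathematics: the theorem is a collation, so the real work lies entirely in the lemmas, and the risk is a mismatch between the parameter conventions (the roles of $m$, $n$, $\ell$, and the two edge cases $m=2$ and $\ell$ odd versus even) as they are threaded through the individual results. I would therefore spend most of the write-up carefully matching the normalised automorphism $\psi_r$ and the subgroups $L'=\(x^\kappa\)$, $L''=\(x^\nu\)y$ to the generation criterion, and confirming that the displayed generators lie in $G(\mathcal D_m,\mathcal C_\ell,\psi_r)$ and generate it. No single step is genuinely hard once the lemmas are granted.
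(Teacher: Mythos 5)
Your overall route is precisely the paper's route: the paper gives no separate proof of Theorem~\ref{thm:A} beyond the sentence ``the results of this section are summarised in the following theorem'', and your collation --- normal form $H=\langle x^n\rangle$, the automorphisms $\varphi_{r,s}$, normalisation to $s=m$ and $r=1$ or $2r\le n$ via Lemma~\ref{lemma:conj}, then Lemma~\ref{lemma:gen} combined with the corollary $\gcd(\kappa,\nu)=1$ --- is exactly that summary, including the correct observation that the reflection-group hypothesis enters only to guarantee $L''\ne\emptyset$.

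There is, however, a genuine flaw in your justification of the generators \eqref{eq:std-gens}. First, the matrix $\begin{pmatrix}x^n&0\\0&1\end{pmatrix}$ is the element $(x^n,1)$, i.e.\ the image of the generator $x^n$ of $H$ paired with $1$; it is not ``the image of $z=x^m$''. Indeed, when $\ell$ is odd, $n$ does not divide $m$, so $\psi_r(Hx^m)=Hx^{mr}\ne H$ (using $\gcd(r,n)=1$), and no element of $G$ with first coordinate $x^m$ has second coordinate $1$. Second, and more seriously, your proposed verification --- that the first coordinates generate $K$ and the fibre condition is met --- is not a sufficient criterion for generation. Take $r=1$ and drop $(x^n,1)$: the matrices $s$, $(x,x)$, $(y,y^{-1})$ have first coordinates generating $K$ and satisfy the fibre condition, yet every word in $(x,x)$, $(y,y^{-1})$ and their $s$-conjugates $(x,x)$, $(y^{-1},y)$ lies in the graph of the involutory automorphism $\rho(i)\colon x\mapsto x$, $y\mapsto y^{-1}$, so together with $s$ they generate a group of order $2|K|$, not $2|K|\,|H|$; for $\ell>1$ this is a proper subgroup. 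The correct (still routine) argument must use $(x^n,1)$ explicitly: it and its conjugate $s(x^n,1)s^{-1}=(1,x^n)$ generate $H\times H$; then for any $(\xi,\eta)\in K\times_\varphi K$, a word in $(x,x^r)$, $(y,y^{-1})$ whose first coordinate is $\xi$ gives some $(\xi,\eta_0)\in G$ with $H\eta_0=\psi_r(H\xi)=H\eta$, whence $(\xi,\eta)=(1,\eta\eta_0^{-1})(\xi,\eta_0)$ is a product of the displayed generators. With this repair the generation claim, and hence the theorem as you assemble it, goes through.
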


\begin{rems}\leavevmode
\begin{enumerate}[1)]
\item 
The groups $G(\mathcal D_m,\mathcal C_\ell,\psi_r)$ where $2m = n\ell$ 
with $n > 1$, and where $\ell$ does not divide $m$, are not listed in
Table I of \cite{cohen:1980}.
\item
It follows from the descriptions of $L'$ and $L''$ that 
$G(\mathcal D_m,\mathcal C_\ell,\psi_r)$ is generated by the reflections
\begin{equation}\label{eq:rk2gen}
  \begin{pmatrix}x^n&0\\0&1\end{pmatrix},\quad
  \begin{pmatrix}0&x^n\\x^{-n}&0\end{pmatrix},\quad
  \begin{pmatrix}0&x^\kappa\\x^{-\kappa}&0\end{pmatrix},\quad
  \begin{pmatrix}0&y\\y^{-1}&0\end{pmatrix},\quad
  \begin{pmatrix}0&x^\nu y\\x^\nu y^{-1}&0\end{pmatrix}
\end{equation}

The groups $G(\mathcal D_{2m-1},\mathcal C_{2m-1},\psi_1)$ and
$G(\mathcal D_{2m},\mathcal C_{4m},\psi_1)$ can be generated by
three reflections whereas $G(\mathcal D_m,\mathcal C_m,\psi_1)$ requires 
four.  Some groups such as $G(\mathcal D_{15},\mathcal C_2,\psi_4)$
and $G(\mathcal D_{21},\mathcal C_2,\psi_8)$ cannot be generated by fewer 
than five reflections.
\item
The first two rows of Table \ref{tbl:1} list the imprimitive groups 
$G(\mathcal D_m,H,\varphi)$. Not every group is proper: for all $m$
$G(\mathcal D_m,\id,\psi_1)$ is of complex type and isomorphic to 
$\ST(2m,m,2)$. Moreover there are pairs of groups which are conjugate
in $\U_2(\HH)$. See \S\ref{sec:bconj} for the details.
\end{enumerate}
\end{rems}

\begin{defn}\label{defn:stdDC}
If $2m = n\ell$, the group $G(\(\zeta_m,j\),\(\zeta_m^n\),\psi_r)$ where
$r$ satisfies the conditions of Theorem \ref{thm:A} is the 
\emph{standard copy} of $G(\mathcal D_m,\mathcal C_\ell,\psi_r)$.
\end{defn}

\subsubsection*{Case 2: $H$ non-cyclic.}
To complete the description of the groups $G(K,H,\varphi)$ where $K$ is
a binary dihedral group, suppose that $K$ is the group defined in 
\eqref{eq:K} where $m \ge 2$ and $H$ is a non-cyclic normal subgroup of $K$. 
Then $x^sy\in H$ for some $s$ and $x^{-2} = x^{-1}(x^sy)^{-1}x(x^sy)\in H$. 
If $m$ is odd, then $H = K$. But if 
$m$ is even and $H\ne K$, then $m > 2$ and $H$ is $\(x^2,y\)$ or $\(x^2,xy\)$.
The two possibilities for $H$ are conjugate in $\mathcal D_{2m}$ and 
isomorphic to $\mathcal D_{m/2}$. 

\begin{table}[ht]
\caption{The imprimitive rank two quaternionic reflection groups 
$G(\mathcal D_m,H,\varphi)$}
\label{tbl:1}
\medskip
\centering
\begin{tabular}{ccccc}
$K$&$H$&$K/H$&$\varphi$\\
\toprule
$\mathcal D_m$&$\mathcal C_\ell$&$\mathcal D_{m/\ell}$&$\psi_r$&$\ell$ odd, $\ell\mid m$\\
$\mathcal D_m$&$\mathcal C_\ell$&$\D_{2m/\ell}$&$\psi_r$&$\ell$ even, $\ell\mid 2m$\\
$\mathcal D_m$&$\mathcal D_m$&$\id$&$\id$&all $m$\\
$\mathcal D_m$&$\mathcal D_{m/2}$&$\mathcal C_2$&$\id$&$m$ even\\
\bottomrule
\end{tabular}

\medskip
$r = 1$ or $1 < r \le n/2$ and $\gcd(r,n) = \gcd(\kappa,\nu) = 1$.\\
$\kappa = n/\gcd(n,r+1)$, $\nu = n/\gcd(n,r-1)$, where $n = 2m/\ell$
\end{table}

\begin{thm}\label{thm:B}
The group $G(K,H,\varphi)$ --- where $H\normal K$ and $H$ and $K$ are non-cyclic 
binary dihedral groups --- is a quaternionic reflection group if and only if 
for some $m$ it is conjugate in $\U_2(\HH)$ to $G(\mathcal D_m,\mathcal D_m,\id)$ 
or $G(\mathcal D_{2m},\mathcal D_m,\id)$.
\end{thm}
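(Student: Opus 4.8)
The plan is to read the result off from the structural analysis carried out immediately before the statement, together with Lemma~\ref{lemma:gen} and Example~\ref{ex:1}; no fresh computation of $L_\varphi$ is required. Write $K = \(x,y\)$ as in \eqref{eq:K} with $K\simeq\mathcal D_m$ and $m\ge 2$. The preceding discussion shows that a non-cyclic normal subgroup $H$ of $K$ is either $K$ itself, or---when $m$ is even and $m > 2$---one of the two subgroups $\(x^2,y\)$ and $\(x^2,xy\)$, each isomorphic to $\mathcal D_{m/2}$ and of index $2$ in $K$. In every case $K/H$ has order $1$ or $2$, so $\Aut(K/H)$ is trivial and the hypothesis $\varphi^2 = \id$ forces $\varphi = \id$; this removes $\varphi$ from the problem at once.

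For the ``if'' direction I would appeal directly to Example~\ref{ex:1}: a subgroup of index $1$ or $2$ in a binary polyhedral group always yields a quaternionic reflection group $G(K,H,\id)$. Hence both $G(\mathcal D_m,\mathcal D_m,\id)$ (index $1$) and $G(\mathcal D_{2m},\mathcal D_m,\id)$ (index $2$, with $m\ge 2$ so that $H=\mathcal D_m$ is non-cyclic) are quaternionic reflection groups.

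For the ``only if'' direction the case $H = K$ yields $G(\mathcal D_m,\mathcal D_m,\id)$ verbatim. In the index-$2$ case, setting $m' = m/2$ identifies $K = \mathcal D_{2m'}$ and $\(x^2,y\) = \(\zeta_m,j\) = \mathcal D_{m'}$, so that $G(K,\(x^2,y\),\id) = G(\mathcal D_{2m'},\mathcal D_{m'},\id)$. The single point needing argument is that the alternative choice $H = \(x^2,xy\)$ gives a conjugate group. For this I would use the embedding $K\hookrightarrow\widetilde K\simeq\mathcal D_{2m}$ with $x = w^2$ from the proof of the previous lemma, where $w = \zeta_{4m}\in\sS^3$. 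A direct check gives $wx^2w^{-1} = x^2$ and $wyw^{-1} = xy$, hence $w\(x^2,y\)w^{-1} = \(x^2,xy\)$; since $w$ also normalises $K$, a direct block computation (in the spirit of Lemma~\ref{lemma:conj}\,(ii)) shows that conjugation by $h = \begin{pmatrix}w&0\\0&w\end{pmatrix}$ carries $G(K,\(x^2,y\),\id)$ to $G(K,\(x^2,xy\),\id)$, the transported automorphism being again forced to be $\id$. Both index-$2$ groups are therefore conjugate in $\U_2(\HH)$ to $G(\mathcal D_{2m'},\mathcal D_{m'},\id)$.

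The main obstacle is exactly this last conjugacy. Lemma~\ref{lemma:conj}\,(ii) as stated applies only when $w$ normalises $H$, which here it does not; instead I would carry out the analogous direct computation with the block matrices, conjugating the diagonal elements $\begin{pmatrix}\alpha&0\\0&\beta\end{pmatrix}$ and the anti-diagonal reflections of $G(K,\(x^2,y\),\id)$ by $h$ and reading off that the result is $G(K,w\(x^2,y\)w^{-1},\id)$. Everything else is routine bookkeeping with the two admissible shapes of $H$ and the triviality of $\Aut(K/H)$.
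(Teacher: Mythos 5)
Your proof is correct and follows essentially the same route as the paper: the classification of non-cyclic normal subgroups of $K$ from the discussion preceding the theorem, the observation that $|K/H|\le 2$ forces $\varphi=\id$, and Example~\ref{ex:1} for the reflection-group property. The only difference is one of explicitness: you carry out the block computation with $h = \begin{pmatrix}w&0\\0&w\end{pmatrix}$ showing $G(K,\langle x^2,y\rangle,\id)$ and $G(K,\langle x^2,xy\rangle,\id)$ are conjugate in $\U_2(\HH)$ (correctly noting that Lemma~\ref{lemma:conj}\,(ii) does not literally apply since $w$ does not normalise $H$), whereas the paper disposes of this point with the prior remark that the two subgroups are conjugate in $\mathcal D_{2m}$.
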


\begin{proof}
The order of $K/H$ is 1 or 2 and $\id$ is its only
automorphism. Thus $G(\mathcal D_m,\mathcal D_m,\id)$ and 
$G(\mathcal D_{2m},\mathcal D_m,\id)$ are reflection groups for all 
$m\ge 2$. (See Example \ref{ex:1}).
\end{proof}

\begin{defn}\label{defn:stdDD}\relax\leavevmode
\begin{enumerate}[\enspace 1)]
\item
The group $G(\(\zeta_m,j\),\(\zeta_m,j\),\id)$ is the \emph{standard copy} of 
$G(\mathcal D_m,\mathcal D_m,\id)$.
\item
The group $G(\(\zeta_{2m},j\),\(\zeta_m,j\),\id)$ is the 
\emph{standard copy} of $G(\mathcal D_{2m},\mathcal D_m,\id)$.
\end{enumerate}
\end{defn}

Conjugacy between standard copies is dealt with in \S\ref{sec:conj}.

\subsection{The families of the binary polyhedral groups $\mathcal T$, 
$\mathcal O$ and $\mathcal I$.}

The cyclic group $\mathcal C_2 = \{\pm1\}$ is the centre of $\mathcal T$, 
$\mathcal O$ and $\mathcal I$ and there are unique chief series
\[
  \id\normal\mathcal C_2\normal\mathcal D_2\normal\mathcal T\normal\mathcal O
  \quad\text{and}\quad
  \id\normal\mathcal C_2\normal\mathcal I.
\]

\begin{lemma}
Suppose that $K$ is $\mathcal T$, $\mathcal O$ or $\mathcal I$. Then
$G = G(K,H,\id)$ is a quaternionic reflection group if and only if $G$ is
$G(\mathcal T,\mathcal T,\id)$,
$G(\mathcal O,\mathcal O,\id)$,
$G(\mathcal O,\mathcal T,\id)$,
$G(\mathcal O,\mathcal D_2,\id)$,
$G(\mathcal O,\mathcal C_2,\id)$,
$G(\mathcal I,\mathcal I,\id)$ or
$G(\mathcal I,\mathcal C_2,\id)$.
\end{lemma}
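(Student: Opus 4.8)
The plan is to combine Lemma~\ref{lemma:gen} with an elementary reduction that transfers the generation question from $K$ to the quotient $K/H$. First I would specialise Definition~\ref{defn:L} to $\varphi = \id$. Since $H\xi = H\xi^{-1}$ is equivalent to $\xi^2\in H$, this gives
\[
  L_{\id} = \{\,\xi\in K\mid \xi^2\in H\,\}.
\]
Writing $\mu:K\to Q$ for the natural map onto $Q = K/H$ and $I = \{\,q\in Q\mid q^2 = 1\,\}$ for the set of elements of order dividing~$2$, this reads $L_{\id} = \mu^{-1}(I)$. As $\mu^{-1}(I)\supseteq\ker\mu = H$, the subgroup it generates contains $H$, hence equals $\mu^{-1}(\langle I\rangle)$. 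By Lemma~\ref{lemma:gen}, $G(K,H,\id)$ is a quaternionic reflection group precisely when $L_{\id}$ generates $K$, so the criterion becomes simply
\[
  \langle I\rangle = Q;
\]
that is, the involutions of $K/H$ generate $K/H$.

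Next I would list the admissible $H$. Because $\mathcal C_2 = \{\pm1\}$ is the unique minimal normal subgroup of each of $\mathcal T$, $\mathcal O$ and $\mathcal I$, the chief series above show that the normal subgroups are $\id$, $\mathcal C_2$, $\mathcal D_2$, $\mathcal T$ for $K = \mathcal T$; these together with $\mathcal O$ for $K = \mathcal O$; and $\id$, $\mathcal C_2$, $\mathcal I$ for $K = \mathcal I$. The subgroups properly between $\mathcal C_2$ and $K$ are the preimages of the proper nontrivial normal subgroups of $\Alt(4)$ and $\Sym(4)$, while $\Alt(5) = \mathcal I/\mathcal C_2$ is simple.

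Finally I would test $\langle I\rangle = Q$ quotient by quotient. The case $H = K$ gives $Q = \id$ and holds vacuously, while $H = \id$ gives $Q = K$, whose only involution is $-1$, so $\langle I\rangle = \mathcal C_2\ne K$ and the criterion fails. For the intermediate $H$ the quotient $Q$ is one of $\mathcal C_2$, $\mathcal C_3$, $\Alt(4)$, $\Sym(3)$, $\Sym(4)$ or $\Alt(5)$, and the criterion is then a standard fact: the involutions generate $\mathcal C_2$, $\Sym(3)$ and $\Sym(4)$ (the transpositions already suffice) and $\Alt(5)$ (a nontrivial class in a simple group), but they fail to generate $\mathcal C_3$ (there are none) and $\Alt(4)$ (the three involutions span only a Klein four-group). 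Reading off which pairs $(K,H)$ produce a surviving quotient yields exactly the seven groups in the statement. The computations are routine; the two points needing care are the reduction $\langle\mu^{-1}(I)\rangle = \mu^{-1}(\langle I\rangle)$ of the first step, where normality of $H$ is used, and the correct enumeration of the normal subgroups $H$---it is this list, not any delicate generation estimate, that decides the outcome.
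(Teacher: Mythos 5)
Your proof is correct and follows essentially the same route as the paper: both rest on Lemma~\ref{lemma:gen}, reduce the condition ``$L_{\id}$ generates $K$'' to generation of $K/H$ by its involutions, enumerate the normal subgroups via the chief series, and then observe that among the quotients only $\id$, $\mathcal C_2$, $\Sym(3)$, $\Sym(4)$ and $\Alt(5)$ are generated by elements of order two while $\mathcal C_3$ and $\Alt(4)$ are not. Your write-up merely makes explicit the step $\langle\mu^{-1}(I)\rangle = \mu^{-1}(\langle I\rangle)$ that the paper's proof uses implicitly.
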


\begin{proof}
From Lemma~\ref{lemma:gen}, $G(K,H,\id)$ is a quaternionic reflection group 
if and only if $L = \{\,\xi\in K\mid\xi^2\in H\,\}$ generates $K$. 
If $|K/H|\le 2$, then $L = K$; if $H = \id$, then $L = \mathcal C_2$;
if $K = \mathcal T$ and $H = \mathcal D_2$, then $L = H$. This reduces the
proof to the cases 
$\mathcal T/\mathcal C_2\simeq \Alt(4)$, $\mathcal O/\mathcal D_2\simeq\Sym(3)$, 
$\mathcal O/\mathcal C_2\simeq\Sym(4)$ and $\mathcal I/\mathcal C_2\simeq\Alt(5)$. 
The result follows from the observation that only $\Alt(4)$ is not generated 
by its elements of order two.
\end{proof}

\begin{lemma}\label{lemma:1}
Suppose that $K$ is $\mathcal T$ or $\mathcal O$, $H\normal K$ and $\varphi$ is
an automorphism of $K/H$ of order 2 such that $G(K,H,\varphi)$ is a reflection group
not conjugate to $G(K,H,\id)$. Let $\delta = \frac{1}{\sqrt2}(i - j)$.
\begin{enumerate}[\enspace(i)]
\item If $K = \mathcal T$, then $H$ is $\id$, $\mathcal C_2$ or $\mathcal D_2$
and $G(\mathcal T,H,\varphi)$ is conjugate to $G(\mathcal T,H,\rho(\delta))$.
\item If $K = \mathcal O$, then $H = \id$ and $G(\mathcal O,\id,\varphi)$ is
conjugate to $G(\mathcal O,\id,\rho(\delta))$ or $G(\mathcal O,\id,\beta)$,
where $\beta$ is the central automorphism of order 2.
\end{enumerate}
\end{lemma}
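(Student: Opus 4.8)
The plan is to treat the normal subgroups $H$ of $K$ one at a time. By the uniqueness of the chief series above, every nontrivial normal subgroup contains the centre $\mathcal C_2$, so the candidates are $H\in\{\id,\mathcal C_2,\mathcal D_2\}$ for $K=\mathcal T$ and $H\in\{\id,\mathcal C_2,\mathcal D_2,\mathcal T\}$ for $K=\mathcal O$; the quotients $K/H$ are then $\mathcal T,\Alt(4),\mathcal C_3$ and $\mathcal O,\Sym(4),\Sym(3),\mathcal C_2$ respectively. Conjugacy among the groups $G(K,H,-)$ is produced by the two moves of Lemma \ref{lemma:conj}: (i) replacing $\varphi$ by $\rho(\xi)\varphi$ for $\xi\in L_\varphi$, and (ii) conjugating $\varphi$ by $\rho(\xi)$ for $\xi$ normalising $K$ and $H$. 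Since the normaliser of $\mathcal T$ and of $\mathcal O$ in $\sS^3$ is $\mathcal O$, move (ii) conjugates $\varphi$ by $\rho(\xi)$ with $\xi\in\mathcal O$; as $\xi$ varies these run through all of $\Aut(\mathcal T)\cong\Sym(4)$ when $K=\mathcal T$ and through $\Inn(\mathcal O)\cong\Sym(4)$ when $K=\mathcal O$. The organising remark is that the image of $\varphi$ in the abelian group $\Aut(K)/\Inn(K)$ is fixed by both moves and is therefore a conjugacy invariant.

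The cases $H\ne\id$ are handled by Corollary \ref{cor:id}. Each quotient $\Alt(4)$, $\Sym(4)$, $\Sym(3)$ has trivial centre, so an inner automorphism has order two only when the coset $H\xi$ does; then $\xi^2\in H$, hence $\xi\in L_{\rho(\xi)}$, and Corollary \ref{cor:id} makes $G(K,H,\rho(\xi))$ conjugate to $G(K,H,\id)$. Because $\Sym(4)$ and $\Sym(3)$ are complete, for $K=\mathcal O$ every order-two automorphism with $H=\mathcal C_2$ or $H=\mathcal D_2$ is inner and so absorbed, while $\mathcal O/\mathcal T\cong\mathcal C_2$ has no involution; this forces $H=\id$ in part~(ii). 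For $K=\mathcal T$ the same device removes the inner involutions when $H=\mathcal C_2$, and when $H=\mathcal D_2$ the group $\mathcal C_3$ has inversion as its only involution, which I would identify with $\rho(\delta)$ from the computation that $\delta$ acts by $i\mapsto-j$, $j\mapsto-i$, $k\mapsto-k$, whence $\delta\varpi\delta^{-1}=\varpi^{-1}$.

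For $H=\id$ the condition $\xi^2\in H$ forces $\xi\in\{\pm1\}$, so Corollary \ref{cor:id} is silent; instead I would compute the root set $L_\varphi=\{\xi\in K:\varphi(\xi)=\xi^{-1}\}$ and apply the generation criterion of Lemma \ref{lemma:gen}. Here $\mathcal T$ and $\mathcal O$ diverge. As $\mathcal T^{\mathrm{ab}}\cong\mathcal C_3$ has no nontrivial homomorphism to $\mathcal C_2$, the group $\mathcal T$ has no central automorphism of order two and $\Aut(\mathcal T)\cong\Sym(4)$; its inner involutions are double transpositions $\rho(\eta)$ with $\eta$ of order four, and each inverts only a copy of $\mathcal D_2$, so none is a reflection group, leaving the single outer class of transpositions, represented by $\rho(\delta)$. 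By contrast $\mathcal O^{\mathrm{ab}}\cong\mathcal C_2$ produces the central involution $\beta$, giving $\Aut(\mathcal O)\cong\Inn(\mathcal O)\times\(\beta\)$, and the invariant image in $\(\beta\)$ splits the count into two cosets. In the trivial coset both the transposition $\rho(\delta)$ and the double transposition $\rho(\eta)$ generate $\mathcal O$, and move (i) merges them, since for a suitable $\xi\in L_{\rho(\eta)}$ the product $\rho(\xi)\rho(\eta)=\rho(\xi\eta)$ is a transposition; the coset collapses to $\rho(\delta)$. In the $\beta$-coset the double-transposition type again inverts only $\mathcal D_2$ and is discarded, while $\beta$ and the transposition type $\rho(\delta)\beta$ are reflection groups; as $\delta\in L_{\rho(\delta)\beta}$, move (i) gives $\rho(\delta)\,\rho(\delta)\beta=\rho(\delta^{2})\beta=\rho(-1)\beta=\beta$, so this coset collapses to $\beta$.

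The main obstacle is the finite but delicate bookkeeping for $K=\mathcal O$, $H=\id$: one has to pin down exactly which elements of $\mathcal O$ are inverted by each candidate, both to decide the generation question of Lemma \ref{lemma:gen} and to produce the elements $\xi$ that effect the two merges. The conceptual crux that keeps this honest is the invariance of the image in $\Aut(K)/\Inn(K)$: it separates $\rho(\delta)$ from $\beta$, and, together with the fact that for $H=\id$ move (i) sends the identity automorphism only to itself (so its class is a singleton), it shows that neither $\rho(\delta)$ nor $\beta$ is conjugate to $G(K,H,\id)$. Thus the two families of part~(ii) are distinct and exhaust the order-two reflection groups not conjugate to the identity one.
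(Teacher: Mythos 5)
Your proof is organised quite differently from the paper's (by the subgroup $H$ and by the coset decomposition $\Aut(\mathcal O)\simeq\Inn(\mathcal O)\times\langle\beta\rangle$, rather than by the order of the conjugating quaternion $\gamma$), and most of it checks out; in the $\beta$-coset you are in fact more explicit than the paper, which never bothers to discard $\rho(k)\beta$. However, there is a genuine gap in part (i): the case $K=\mathcal T$, $H=\mathcal C_2$ is left unfinished. Your second paragraph disposes only of the \emph{inner} involutions of $\mathcal T/\mathcal C_2\simeq\Alt(4)$ (via Corollary \ref{cor:id}) and then moves on to $H=\mathcal D_2$. But $\Aut(\Alt(4))\simeq\Sym(4)$ also contains \emph{outer} involutions, the transposition type, and these are precisely the automorphisms that the conclusion of (i) is about when $H=\mathcal C_2$: they produce the order-96 reflection group $G(\mathcal T,\mathcal C_2,\rho(\delta))$ of Table \ref{tbl:2}. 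Nothing in your text shows that an arbitrary outer involution $\varphi$ of $\mathcal T/\mathcal C_2$ yields a group conjugate to $G(\mathcal T,\mathcal C_2,\rho(\delta))$. The repair uses exactly the tools you deploy for $H=\id$: every automorphism of $\mathcal T/\mathcal C_2$ is $\rho(\xi)$ for some $\xi\in\mathcal O$ (such $\xi$ automatically normalises the central subgroup $\mathcal C_2$), so move (ii) realises all of $\Inn(\Sym(4))$; transpositions form a single class; and $\rho(\delta)$ induces a transposition since $\delta\in\mathcal O\setminus\mathcal T$. As written, though, part (i) is proved only for $H=\id$ and $H=\mathcal D_2$. (A smaller slip of the same kind: the set of elements inverted by an inner involution $\rho(\eta)$, $\eta$ of order 4 in $\mathcal D_2$, is $\{\pm1,\pm i,\pm j\}$ when $\eta=k$ --- a proper subset of $\mathcal D_2$ and not a subgroup, not ``a copy of $\mathcal D_2$''; the correct statement, and the one you need, is that it is contained in $\mathcal D_2$.)

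A second, lesser point: your ``organising remark'' that the image of $\varphi$ in $\Aut(K)/\Inn(K)$ is a conjugacy invariant is established only for conjugations produced by moves (i) and (ii); it is not an invariant of conjugacy in $\U_2(\HH)$, because nothing guarantees that a conjugation between two groups $G(K,H,\varphi)$ and $G(K,H,\varphi')$ is implemented by those moves. Indeed, Theorem \ref{thm:TO} of this paper (the counterexample to Cohen's Lemma (2.3)) exhibits a $\U_2(\HH)$-conjugation that does not even preserve $K$ and $H$. Fortunately the lemma does not require the non-conjugacy statements you extract from the invariant: your case analysis (once the gap above is filled) already shows that every order-two $\varphi$ whose group is a reflection group is conjugate to $\id$, to $\rho(\delta)$ or to $\beta$, which is all that is claimed. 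Non-vacuousness of part (ii) follows for free: $L_{\id}=\{\pm1\}$ does not generate $\mathcal O$, so $G(\mathcal O,\id,\id)$ is not a reflection group and cannot be conjugate to either representative. So present the invariant as a bookkeeping check on the moves, not as the ``conceptual crux that keeps this honest.''
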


\begin{proof}
We have
\begin{gather*}
  \Aut(\mathcal T)\simeq\Aut(\mathcal T/\mathcal C_2)\simeq
  \Aut(\mathcal O/\mathcal C_2)\simeq\Sym(4),\\
  \Aut(\mathcal O/\mathcal D_2)\simeq\Sym(3)
  \enspace\text{and}\enspace
  \Aut(\mathcal O)\simeq\(\beta\)\times\Sym(4),
\end{gather*} 
where $\beta$ is the central automorphism of $\mathcal O$ defined by 
$\beta(\xi) = \xi$ for $\xi\in\mathcal T$ and
$\beta(\xi) = -\xi$ for $\xi\in\mathcal O\setminus\mathcal T$.

Suppose at first that $\varphi = \rho(\gamma)$ for some $\gamma\in\mathcal O$
so that $L_\varphi = \{\,\xi\in K\mid \gamma\xi\gamma^{-1}\xi \in H\,\}$.
Then the order of $\gamma$ is 4 or 8. If the order is 8, then $K = \mathcal O$
and $\gamma^2\in\mathcal D_2$, hence $\mathcal D_2\subseteq H$. Thus 
$\gamma\in L_\varphi$ and from Corollary~\ref{cor:id}, $G(\mathcal O,H,\varphi)$ 
is conjugate to $G(\mathcal O,H,\id)$, contrary to assumption. Thus we may 
suppose that $\gamma$ is an element of order 4, hence 
$\gamma^2 = -1$ and $L_\varphi = \{\,\xi\in K\mid -(\gamma\xi)^2\in H\,\}$.

Suppose that $K = \mathcal T$. If $\gamma\in\mathcal D_2$, then 
$L_\varphi\subseteq\mathcal D_2$ whence $G(\mathcal T,H,\rho(\gamma))$ is not 
a reflection group. However, if $\gamma\in\mathcal O\setminus\mathcal T$,
then $L_\varphi$ generates $\mathcal T$ and so $G(\mathcal T,H,\rho(\gamma))$ 
is a reflection group for all $H$. There is a single conjugacy class of elements of
order 4 in $\mathcal O\setminus\mathcal T$, hence from Lemma~\ref{lemma:conj}\,(ii) 
we may take $\varphi = \rho(\delta)$. This completes the proof of~(i).

Suppose that $K = \mathcal O$. If $H\ne\id$, then $\gamma\in L_\varphi$.
From Corollary~\ref{cor:id}, $G(\mathcal O,H,\varphi)$ is conjugate to 
$G(\mathcal O,H,\id)$, contrary to assumption, hence $H = \id$. Since 
$k$ and $\delta$ represent the conjugacy classes of elements of order 4, 
$G(\mathcal O,\id,\varphi)$ is conjugate to $G(\mathcal O,\id,\rho(k))$ 
or $G(\mathcal O,\id,\rho(\delta))$.  But $k\in L_{\rho(\delta)}$ and 
from Lemma~\ref{lemma:conj}\,(i) we find that $G(\mathcal O,\id,\rho(k))$ 
is conjugate to $G(\mathcal O,\id,\rho(k\delta))$, which is conjugate to 
$G(\mathcal O,\id,\rho(\delta))$.

If the central automorphism $\beta$ acts non-trivially on $\mathcal O/H$, then
$H = \id$. In this case $L_\beta$ is the union of 
$\{1,-1\}$ and the conjugacy class of $\delta$. Thus $G(\mathcal O,\id,\beta)$ 
is a reflection group and from Lemma~\ref{lemma:conj}\,(i) it is conjugate to 
$G(\mathcal O,\id,\rho(\xi)\beta)$ for all conjugates $\xi$ of~$\delta$.
\end{proof}

\begin{table}[ht]
\caption{The reflection groups $G(K,H,\varphi)$ 
where $K$ is $\mathcal T$, $\mathcal O$ or $\mathcal I$}\label{tbl:2}
\smallskip
\centering
\begin{tabular}{lclr}
$G(K,H,\varphi)$&$K/H$&\qquad isomorphisms&order\\
\toprule
$G(\mathcal T,\mathcal T,\id)$&$\id$&&1\,152\\
$G(\mathcal T,\mathcal D_2,\rho(\delta))$&$\mathcal C_3$&&384\\
$G(\mathcal T,\mathcal C_2,\rho(\delta))$&$\Alt(4)$&
   ${}\simeq G(\mathcal O,\id,\beta)\simeq\mathcal C_4\boxdot_2\mathcal O$
   &96\\
$G(\mathcal T,\id,\rho(\delta))$&$\mathcal T$&
  ${}\simeq\ST(12)$ complex type&48\\
\midrule
$G(\mathcal O,\mathcal O,\id)$&$\id$&&4\,608\\
$G(\mathcal O,\mathcal T,\id)$&$\mathcal C_2$&&2\,304\\
$G(\mathcal O,\mathcal D_2,\id)$&$\Sym(3)$&&768\\
$G(\mathcal O,\mathcal C_2,\id)$&$\Sym(4)$&
  ${}\simeq\mathcal C_4\boxdot\mathcal O$&192\\
$G(\mathcal O,\id,\beta)$&$\mathcal O$&
  ${}\simeq G(\mathcal T,\mathcal C_2,\rho(\delta))\simeq\
  \mathcal C_4\boxdot_2\mathcal O$&96\\
$G(\mathcal O,\id,\rho(\delta))$&$\mathcal O$&
  ${}\simeq\ST(13)$ complex type&96\\
\midrule
$G(\mathcal I,\mathcal I,\id)$&$\id$&&28\,800\\
$G(\mathcal I,\mathcal C_2,\id)$&$\Alt(5)$&
  ${}\simeq\mathcal C_4\boxdot\mathcal I$&480\\
$G(\mathcal I,\mathcal C_2,\Theta)$&$\Alt(5)$&&480\\
$G(\mathcal I,\id,\Theta)$&$\mathcal I$&&240\\
$G(\mathcal I,\id,\rho(j))$&$\mathcal I$&
  ${}\simeq\ST(22)$ complex type&240\\
\bottomrule
\end{tabular}

\smallskip
$\delta = \frac{1}{\sqrt2}(i-j)$,\enspace $\beta\in\Aut(\mathcal O)$ is central,
\enspace $\Theta\in\Aut(\mathcal I)\setminus\Inn(\mathcal I)$
\end{table}

\begin{lemma}\label{lemma:theta}
Suppose that $H\normal\mathcal I$ and $\varphi$ is an automorphism of 
$\mathcal I/H$ of order 2 such that $G(\mathcal I,H,\varphi)$ is a reflection 
group not conjugate to $G(\mathcal I,H,\id)$. Then, up to conjugacy,
$H = \mathcal C_2$ and $\varphi = \Theta$ or $H = \id$ and 
$\varphi$ is either $\Theta$ or $\rho(j)$, where $\Theta$ is the
automorphism of $\mathcal I$ such that $\Theta(i) = -i$ and
$\Theta(\sigma) = -\tfrac12(\tau + i - \tau^{-1}k)$, where 
$\sigma = \tfrac12(\tau^{-1} + i + \tau j)$.
\end{lemma}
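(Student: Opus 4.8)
The plan is to follow the pattern of the proof of Lemma~\ref{lemma:1}, the essential new feature being that $\mathcal{I}$ is self-normalising in $\sS^3$, so every $\rho(\xi)$ at our disposal is inner and the outer automorphism $\Theta$ must be introduced by hand. First I would fix the admissible $H$. The unique chief series forces the normal subgroups of $\mathcal{I}$ to be $\id$, $\mathcal{C}_2$ and $\mathcal{I}$; since $\mathcal{I}/H$ must carry an automorphism of order~$2$, only $H = \id$ and $H = \mathcal{C}_2$ survive, with quotients $\mathcal{I}$ and $\Alt(5)$. Because $\mathcal{C}_2$ is the centre and $\mathcal{I}$ is perfect, reduction modulo the centre gives $\Aut(\mathcal{I}) \cong \Aut(\Alt(5)) \cong \Sym(5)$ with $\Inn(\mathcal{I}) \cong \Alt(5)$ in both cases.

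Next I would count involutions up to the conjugation actually available. The involutions of $\Sym(5)$ are the transpositions and the products of two disjoint transpositions, and since the $\Sym(5)$-centraliser of each contains an odd permutation, neither class splits under $\Alt(5)$. Hence there are just two involutions up to conjugation by $\Inn(\mathcal{I})$: an inner one, represented by $\rho(\gamma)$ for an element $\gamma$ of order~$4$ (these forming a single conjugacy class of $\mathcal{I}$, so I take $\gamma = j$), and an outer one, represented by $\Theta$ (from its definition one checks $\Theta$ is an involution lying outside $\Inn(\mathcal{I})$). As $\mathcal{I}$ is self-normalising in $\sS^3$, the element $\xi$ in Lemma~\ref{lemma:conj}(ii) must lie in $\mathcal{I}$, so that lemma lets me replace $\varphi$ by any $\Inn(\mathcal{I})$-conjugate without changing the conjugacy class of $G(\mathcal{I}, H, \varphi)$. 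Thus it suffices to treat $\varphi \in \{\rho(j), \Theta\}$.

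Then I would separate the two values of $H$. For $H = \mathcal{C}_2$ and $\varphi = \rho(\gamma)$ with $\gamma^2 = -1 \in \mathcal{C}_2$, one checks $\gamma \in L_{\rho(\gamma)}$ (since $\gamma^2 \in H$), so Corollary~\ref{cor:id} identifies $G(\mathcal{I}, \mathcal{C}_2, \rho(\gamma))$ with $G(\mathcal{I}, \mathcal{C}_2, \id)$ and the hypothesis excludes it; only $\varphi = \Theta$ remains. For $H = \id$ this escape is blocked, as $\gamma^2 = -1 \notin \id$; moreover $G(\mathcal{I}, \id, \id)$ is not a reflection group, its set $L$ being only $\mathcal{C}_2$, so it cannot be conjugate to the reflection group under construction and the exclusion imposes nothing. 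Both $\rho(j)$ and $\Theta$ then survive, which yields exactly the three triples of the statement.

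It remains to confirm these are reflection groups, which by Lemma~\ref{lemma:gen} means verifying that $L_\varphi = \{\xi \in \mathcal{I} : \varphi(\xi) \equiv \xi^{-1} \pmod{H}\}$ generates $\mathcal{I}$. Since $\varphi(i) = -i = i^{-1}$ for both $\rho(j)$ and $\Theta$, each $L_\varphi$ contains $i$; the explicit value of $\Theta(\sigma)$ then lets me locate enough further elements to obtain a generating set, and for $H = \mathcal{C}_2$ the corresponding $L_\Theta$ only enlarges, so generation is inherited from the $H = \id$ case. I expect this last step to be the main obstacle: because $\Theta$ is genuinely outer and is not induced by any quaternion conjugation, there is no structural shortcut of the kind used for $\mathcal{T}$ and $\mathcal{O}$, and one must compute $\Theta$ on concrete elements of $\mathcal{I}$ to pin down $L_\Theta$ and check that it generates.
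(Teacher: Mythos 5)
Your proposal is correct, and its skeleton matches the paper's: restrict $H$ to $\id$ or $\mathcal C_2$ via the chief series, reduce $\varphi$ to the representatives $\rho(j)$ and $\Theta$, kill the inner case with $H = \mathcal C_2$ by Corollary \ref{cor:id}, and observe that $G(\mathcal I,\id,\id)$ is not a reflection group so the non-conjugacy hypothesis is vacuous there. Where you genuinely diverge is the uniqueness of the outer representative. The paper does not argue that all outer involutions of $\Aut(\mathcal I)\simeq\Sym(5)$ are $\Inn(\mathcal I)$-conjugate; instead it writes an arbitrary outer involution as $\varphi = \rho(\xi)\Theta$, deduces from $\varphi^2 = \id$ that $\xi\Theta(\xi)\in\mathcal C_2$, upgrades this to $\xi\Theta(\xi) = 1$ by a counting argument (the image of $L_\Theta$ in $\Alt(5)$ is $\{\,g\mid (tg)^2 = 1\,\}$, so $|L_\Theta| = 20$ for both choices of $H$), and then applies Lemma \ref{lemma:conj}\,(i) with $\xi\in L_\Theta$. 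You instead note that the transposition class of $\Sym(5)$ does not split under $\Alt(5)$ (its centraliser contains odd permutations), so every outer involution is already $\Inn(\mathcal I)$-conjugate to $\Theta$ and Lemma \ref{lemma:conj}\,(ii) finishes; for this step your route is cleaner and more elementary. What the paper's route buys, and what you lose by discarding it, is that the $L_\Theta$ analysis does double duty: the same ten-element image $\{\,g\in\Alt(5)\mid (tg)^2=1\,\}$ --- consisting of $1$, six $3$-cycles and three double transpositions --- generates $\Alt(5)$, and since $\mathcal I$ is perfect its preimage generates $\mathcal I$; this proves at a stroke that $G(\mathcal I,\id,\Theta)$ and $G(\mathcal I,\mathcal C_2,\Theta)$ are reflection groups. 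So your closing claim that ``there is no structural shortcut'' and that one must compute $\Theta$ on concrete quaternions is wrong: the transposition correspondence is exactly such a shortcut, and it is the part of your plan you left undone. This incompleteness is not fatal to the lemma as stated, since being a reflection group appears as a hypothesis, but it is needed to justify the entries of Table \ref{tbl:2}, which is why the paper carries it out.
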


\begin{proof}
Suppose at first that $\varphi = \rho(\gamma)$ for some $\gamma\in\mathcal I$,
which must have order 4. If $H = \mathcal C_2$, then $\gamma^2\in H$ and 
hence $\gamma\in L_\varphi$. From Corollary~\ref{cor:id},
$G(\mathcal I,\mathcal C_2,\rho(\gamma))$ is conjugate to 
$G(\mathcal I,\mathcal C_2,\id)$, contrary to assumption. However, if $H = \id$, 
then $L_\varphi$ generates $\mathcal I$ and so $G(\mathcal I,\id,\rho(\gamma))$ is a
reflection group. There is only one class of elements of order 4 in $\mathcal I$,
therefore, by Lemma~\ref{lemma:conj}\,(ii), we may choose $\gamma = j$.

The automorphism 
$\Theta\in\Aut(\mathcal I)\simeq\Aut(\mathcal I/\mathcal C_2)\simeq\Sym(5)$
corresponds to conjugation by a transposition $t\in\Sym(5)$. If $H$ is $\id$
or $\mathcal C_2$, the image of $L_\Theta$ in the group of inner automorphisms 
$\Inn(\mathcal I)\simeq\mathcal I/\mathcal C_2\simeq\Alt(5)$ is 
$\{\, g\in \Alt(5)\mid (tg)^2 = 1\,\}$. Since $\Theta(-1) = -1$, 
$\xi\in L_\Theta$ if and only if $-\xi\in L_\Theta$. In both cases it follows 
that $|L_\Theta| = 20$. Consequently $\Theta(\xi)\xi\in\mathcal C_2$ if and 
only if $\Theta(\xi)\xi = 1$. Moreover, $L_\Theta$ generates $\mathcal I$, 
thus $G(\mathcal I,\mathcal C_2,\Theta)$ and $G(\mathcal I,\id,\Theta)$ are 
reflection groups.

Suppose that $\varphi$ is another automorphism of order 2 in
$\Aut(\mathcal I)\setminus\Inn(\mathcal I)$. Then 
$\varphi\Theta\in\Inn(\mathcal I)$ and so $\varphi\Theta = \rho(\xi)$ for 
some $\xi\in\mathcal I$. For all $\eta\in\mathcal I$ we have 
$\varphi(\eta) = \xi\Theta(\eta)\xi^{-1}$, whence 
$\eta = \varphi^2(\eta) = (\xi\Theta(\xi))\eta(\xi\Theta(\xi))^{-1}$. 
Thus $\xi\Theta(\xi)\in\mathcal C_2$ and from the previous paragraph
$\xi\Theta(\xi) = 1$. Therefore $\varphi\in L_\Theta$ and from 
Lemma~\ref{lemma:conj}\,(i), $G(\mathcal I,H,\varphi)$ is conjugate to 
$G(\mathcal I,H,\Theta)$.
\end{proof}

The results of this section are summarised in Table \ref{tbl:2}. The 
isomorphisms are described in Theorems \ref{thm:extbin} and \ref{thm:primST}.
The groups are those of Table I of \cite{cohen:1980}. Note that there is a 
misprint in that table: the order of $G(\mathcal I,\mathcal I,\id)$ should 
be 28\,800.

\section{Primitive groups with imprimitive complexification}
\label{sec:extST}

From \cite[Theorem 3.6]{cohen:1980}, if $G$ is a primitive 
quaternionic reflection group whose complexification is imprimitive, 
then the rank of $G$ is at most two.
The rank two examples can be obtained from
\begin{enumerate}[\enspace(i)]
\item a cyclic group $\mathcal C_d$.
\item binary polyhedral groups $H\normal K$, and 
\item a surjective homomorphism $\psi : \mathcal C_d\to K/H$.
\end{enumerate}
Let $\mathcal C_d\times_\psi K$ denote the pullback
\[
\begin{tikzpicture}[scale=1.25]
\node (A) at (0,1) {$\mathcal C_d\times_\psi K$};
\node (B) at (1.5,1) {$K$};
\node (C) at (0,0) {$\mathcal C_d$};
\node (D) at (1.5,0) {$K/H$};
\path[->,font=\scriptsize,>=angle 90]
(A) edge node[above]{$\pi_K$} (B)
(A) edge node[right]{$\pi_d$} (C)
(B) edge node[right]{$\mu$} (D)
(C) edge node[above]{$\psi$} (D);
\end{tikzpicture}
\]
Its elements are the pairs $(\alpha,\xi)\in\mathcal C_d\times K$ such
that $\psi(\alpha) = H\xi$.

For $\alpha\in\sS^1$, $\xi\in\sS^3$ the maps $L(\alpha) : h \mapsto \alpha h$ 
and $R(\xi) : h\mapsto h\overline{\xi}$ preserve the $\C$-structure of $\HH$.
The transformation $\theta : (\alpha,\xi)\mapsto L(\alpha)R(\xi)$ is a surjective 
homomorphism $\sS^1\times\sS^3\to\U_2(\C)$ with kernel $\{\pm(1,1)\}$. If $K$ 
is not a cyclic or binary dihedral group, then $f = |K/H|$ is 1, 2 or 3 and $f$
divides $d$. In this case we may unambiguously denote 
$\theta(\mathcal C_d\times_\psi K)\subset\U_2(\C)$ by $\mathcal C_d\circ_f K$. 
If $d$ is odd, then $\mathcal C_d\circ_f K = \mathcal C_{2d}\circ_f K$ so 
from now on we require $d$ to be even and a multiple of $f$.  Therefore 
$|\mathcal C_d\circ_f K| = \dfrac{d}{2f}|K|$.

\smallskip
In \cite[\S3]{cohen:1976} Cohen proves that the 19 primitive rank 
two Shephard and Todd groups are conjugate in $\GL_2(\C)$ to 
$\mathcal C_d\circ_f K$ where $K = \mathcal T$, $f\in\{1,3\}$ and
$d\in\{6,12\}$ or $K = \mathcal O$, $f\in\{1,2\}$ and $d\in\{4,8,12,24\}$
or $K = \mathcal I$, $f = 1$ and $d\in\{4,6,10,12,20,30,60\}$. 

\begin{lemma}\label{lemma:s}
If $G\subset\U_2(\C)$, the matrix 
$s = \begin{pmatrix}\phantom{-}0&j\\-j&0\end{pmatrix}$ normalises the 
quaternionic group $G^\sharp$ of complex type if and only if 
$\det(g)^{-1}g\in G$ for all $g\in G$. In particular, if $K$ is 
$\mathcal T$, $\mathcal O$ or $\mathcal I$ and $d$ is even, all 
groups $(\mathcal C_d\circ_f K)^\sharp$ except 
$(\mathcal C_d\circ_3\mathcal T)^\sharp$ are normalised by $s$.
\end{lemma}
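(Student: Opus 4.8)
The plan is to prove the two assertions in turn: the equivalence by a direct matrix computation, and the ``in particular'' clause by specialising that computation to the groups $\mathcal C_d\circ_f K$.

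For the equivalence, first note that $s^2 = I$, so $s^{-1}=s$. Writing a typical element of $G^\sharp$ as $g=\begin{pmatrix}a&b\\c&d\end{pmatrix}$ with $a,b,c,d\in\C$, I would compute $sgs$ using the identity $jz=\bar z\,j$ for $z\in\C$, which carries each factor $j$ past the complex entries while conjugating them. The outcome is the complex matrix $sgs=\begin{pmatrix}\bar d&-\bar c\\-\bar b&\bar a\end{pmatrix}$. Since $g$ is unitary we have $g^{-1}=\overline{g}^{\Tr}$, and comparing this with $\operatorname{adj}(g)=\det(g)\,g^{-1}$ --- equivalently, reading off the entries of $g^{-1}$ in two ways --- shows that the four entries displayed are exactly $\det(g)^{-1}$ times $a,b,c,d$. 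Hence $sgs=\det(g)^{-1}g$, so conjugation by $s$ sends $g^\sharp$ to $(\det(g)^{-1}g)^\sharp$. The map $\phi\colon g\mapsto\det(g)^{-1}g$ is a homomorphism of $\U_2(\C)$ (the scalar commutes with everything) and is injective, since $\phi(g)=I$ forces $g$ to be a scalar equal to its own determinant, whence $g=I$. Therefore $sG^\sharp s^{-1}=(\phi(G))^\sharp$, and since $G$ is finite this equals $G^\sharp$ if and only if $\phi(G)\subseteq G$, i.e.\ $\det(g)^{-1}g\in G$ for all $g\in G$.

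For the second assertion I would evaluate $\phi$ on $G=\mathcal C_d\circ_f K=\theta(\mathcal C_d\times_\psi K)$. Because $R(\xi)\in\SU_2(\C)$ while $L(\alpha)=\alpha I$, one has $\det\theta(\alpha,\xi)=\alpha^2$, and so $\phi(\theta(\alpha,\xi))=\alpha^{-2}\theta(\alpha,\xi)=\theta(\alpha^{-1},\xi)$. Using $\ker\theta=\{\pm(1,1)\}$, the membership $\theta(\alpha^{-1},\xi)\in G$ amounts to asking whether $(\alpha^{-1},\xi)$ or $(-\alpha^{-1},-\xi)$ lies in the pullback $\mathcal C_d\times_\psi K$, that is, whether $\psi(\alpha)^{-1}=\mu(\xi)$ or $\psi(-1)\psi(\alpha)^{-1}=\mu(-\xi)$. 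When $f\in\{1,2\}$ the group $K/H$ is cyclic of order at most $2$, so every element equals its inverse; then $\psi(\alpha)^{-1}=\psi(\alpha)=\mu(\xi)$, the first alternative already holds, and $s$ normalises $(\mathcal C_d\circ_f K)^\sharp$.

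The remaining case $f=3$ occurs only for $K=\mathcal T$ with $H=\mathcal D_2$, since among $\mathcal T,\mathcal O,\mathcal I$ the only cyclic quotient of order $3$ is $\mathcal T/\mathcal D_2\simeq\mathcal C_3$. Here $K/H$ has odd order, so the order-two elements $-1$ of $\mathcal C_d$ and of $K$ map to the identity, giving $\psi(-1)=\mu(-1)=1$; both alternatives then collapse to the single condition $\psi(\alpha)^2=1$, which fails whenever $\psi(\alpha)$ generates $\mathcal C_3$. Thus $\phi(G)\ne G$ and $s$ does not normalise $(\mathcal C_d\circ_3\mathcal T)^\sharp$. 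The step needing the most care is exactly this one: one must check that the sign ambiguity in $\ker\theta$ does not furnish a second route to closure when $f=3$, and this is precisely where the oddness of $|\mathcal C_3|$, forcing $\psi(-1)=1$, is decisive.
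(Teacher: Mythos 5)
Your proposal is correct and takes essentially the same route as the paper: both rest on the identity $sg^\sharp s^{-1}=\overline{\det(g)}\,g^\sharp=\det(g)^{-1}g^\sharp$ (which the paper asserts and you verify by direct computation with $jz=\bar z j$ and the adjugate), followed by checking closure of $\mathcal C_d\circ_f K$ under $g\mapsto\det(g)^{-1}g$. The only cosmetic difference is in that closure step --- the paper argues via the scalar $\zeta_d^2\id$ belonging to $G$ when $f\in\{1,2\}$ and not belonging to $\mathcal C_d\circ_3\mathcal T$, while you argue via membership of $(\alpha^{-1},\xi)$ in the pullback $\mathcal C_d\times_\psi K$ --- which is equivalent bookkeeping.
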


\begin{proof}
In general, if $g\in \U_2(\C)$, then 
$sg^\sharp s^{-1} = \overline{\det(g)}g^\sharp = \det(g)^{-1}g^\sharp$.
For the groups $G = (\mathcal C_{2d}\circ_f K)^\sharp$ with $f\in \{1,2\}$
the determinant of every element is a power of $\zeta_{2d}^2$ and 
$\zeta_{2d}^2\id\in G$. However, if $K = \mathcal T$ and $f = 3$ there 
exists $\varpi\in K$ of order 3 such that 
$g = \theta(\zeta_d,\varpi)\in\mathcal C_d\circ_3\mathcal T$. Its determinant
is $\zeta_d^2$ but $\zeta_d^{-2}g \notin\mathcal C_d\circ_3\mathcal T$
since $\zeta_d^2\id\notin\mathcal C_d\circ_3\mathcal T$.
\end{proof}

\begin{defn}
Suppose that $K$ is $\mathcal T$, $\mathcal O$ or $\mathcal I$, $d$ is even
and $f$ is 1 or 2.  The \emph{extended binary polyhedral group} 
$\mathcal C_d\boxdot_f K$ is the semidirect product 
$(\mathcal C_d\circ_f K)^\sharp\(s\)$ where $s$ is defined in Lemma 
\ref{lemma:s}. If $f = 1$, 
we omit the subscript $f$. From its construction, the complexification 
$(\mathcal C_d\boxdot_f K)^\circ$ is imprimitive.
\end{defn}
 
\begin{thm}\label{thm:extST}
The extended binary polyhedral group $\mathcal C_d\boxdot_f K$ is 
generated by its quaternionic reflections if and only if it is one of 
the following.
\begin{enumerate}[\enspace(i)]
\item $\mathcal C_d\boxdot\mathcal T$, order $24d$ with $d$ a multiple of 6.
\label{i:a}
\item $\mathcal C_d\boxdot\mathcal O$, order $48d$ with $d$ a multiple of 4.
\label{i:b}
\item $\mathcal C_d\boxdot_2\mathcal O$, order $24d$ with $d$ a multiple 
of 4 but not divisible by 16.\label{i:c}
\item $\mathcal C_d\boxdot\mathcal I$, order $120d$ with $d$ a multiple of 4, 
6 or 10.\label{i:d}
\end{enumerate}
The groups are primitive except for $\mathcal C_4\boxdot\mathcal O$,
$\mathcal C_4\boxdot_2\mathcal O$ and $\mathcal C_4\boxdot\mathcal I$
(see Theorem \ref{thm:extbin}).
\end{thm}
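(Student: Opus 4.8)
The plan is to reduce the statement to a purely group-theoretic generation question about the rotation group $K/\{\pm1\}$ and then settle that question case by case. Write $N = (\mathcal C_d\circ_f K)^\sharp$, a normal subgroup of index $2$ in $G = \mathcal C_d\boxdot_f K = N\langle s\rangle$. Since $s$ is itself a reflection — its fixed space is the quaternionic line spanned by $(-j,1)$, so that $s = s_{(1,-j),-1}$ — the subgroup $R$ generated by all reflections meets both cosets of $N$, and therefore $R = G$ if and only if $R\cap N = N$. Moreover, whenever $hs$ is a reflection with $h\in N$, the product $(hs)s = h$ lies in $R$; so $R\cap N$ is generated by the complex-type reflections lying in $N$ together with $\{\,h\in N : hs\text{ is a reflection}\,\}$. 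First I would pin down these two families of reflections.

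For the coset $Ns$ I would use the identity $s\,g^\sharp s^{-1} = \det(g)^{-1}g^\sharp$ from Lemma~\ref{lemma:s}. Writing $g = \theta(\alpha,\xi) = L(\alpha)R(\xi) = \alpha R(\xi)$, one gets $(gs)^2 = g\,(sgs^{-1}) = \det(g)^{-1}g^2 = \alpha^{-2}g^2 = R(\xi^2)$, independently of $\alpha$. Complexifying, the eigenvalues of $(gs)^\circ$ lie among $\{\pm e^{\pm i\phi}\}$, where $e^{\pm i\phi}$ are the eigenvalues of $R(\xi)$; for $gs$ to be a quaternionic reflection the eigenvalue $1$ must occur, and this forces $\phi\in\{0,\pi\}$, i.e.\ $\xi = \pm1$. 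Hence the reflections in $Ns$ are exactly the involutions $\beta s$ with $\beta I$ a scalar of $N$, and each such $\beta I = (\beta s)s$ lies in $R$. Consequently $R$ contains every scalar of $N$, and $R = G$ if and only if the complex-type reflections of $\mathcal C_d\circ_f K$, taken modulo scalars, project onto the full rotation group $K/\{\pm1\}$ — that is, onto $\Alt(4)$, $\Sym(4)$ or $\Alt(5)$ according as $K$ is $\mathcal T$, $\mathcal O$ or $\mathcal I$.

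Next I would classify the complex-type reflections. Since $L(\alpha) = \alpha I$, the element $\theta(\alpha,\xi)$ has eigenvalues $\alpha e^{\pm i\phi}$, so it is a reflection precisely when $\alpha^{-1}$ is an eigenvalue of $R(\xi)$ and $\xi\ne\pm1$; equivalently $\alpha = \zeta_m^{\pm1}$, where $m$ is the order of $\xi$. Such a reflection actually occurs in $N$ only when $\zeta_m^{\pm1}\in\mathcal C_d$ (that is, $m\mid d$) and the pullback condition $\psi(\alpha) = H\xi$ holds, and its image in $K/\{\pm1\}$ is the class of $\xi$. Thus the task becomes: determine for which $d$ the admissible $\xi$ generate $K/\{\pm1\}$. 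When $f = 1$ the pullback is trivial and the answer is immediate: the order-$3$ elements generate $\Alt(4)$ (forcing $6\mid d$, since $d$ is even and the order-$4$ elements of $\mathcal T$ project only onto the Klein four-group); the order-$4$ elements of $\mathcal O$ project onto a full class of transpositions and so generate $\Sym(4)$ (forcing $4\mid d$); and in $\mathcal I$ the elements of order $4$, $6$ or $10$ project onto generating classes of $\Alt(5)$ (forcing $d$ to be a multiple of $4$, $6$ or $10$).

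The hard part will be the case $K = \mathcal O$, $f = 2$, where $H = \mathcal T$ and the pullback ties the parity of $\alpha$ in $\mathcal C_d/\langle\zeta_d^2\rangle$ to whether $\xi$ lies in $\mathcal T$: an admissible reflection projects to an odd permutation in $\Sym(4)\setminus\Alt(4)$ only when $\xi\in\mathcal O\setminus\mathcal T$, which requires $\alpha$ to be an odd power of $\zeta_d$. I would check that the order-$4$ elements of $\mathcal O\setminus\mathcal T$ yield reflections exactly when $d\equiv 4\pmod 8$, and the order-$8$ elements exactly when $d\equiv 8\pmod{16}$; in either case these alone project onto a generating set of $\Sym(4)$ (all transpositions, respectively all $4$-cycles). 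Hence a reflection meeting $\Sym(4)\setminus\Alt(4)$ exists — and then all of $\Sym(4)$ is generated — if and only if $4\mid d$ and $16\nmid d$, whereas for $16\mid d$ every admissible reflection projects into $\Alt(4)$ and $G$ fails to be a reflection group. Assembling these computations, together with the order formula $|\mathcal C_d\circ_f K| = \tfrac{d}{2f}|K|$, yields exactly the list (i)--(iv); the only genuinely delicate bookkeeping is this $2$-adic analysis of the pullback in case~(iii).
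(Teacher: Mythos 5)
Your proposal is correct, but it takes a genuinely different route from the paper. The paper's proof rests on a central product decomposition: the scalar $z=\theta(\zeta_d,1)$ and the reflection $s$ generate a dihedral group $D$ of order $2d$ commuting with $\theta(1,\xi)$ for all $\xi\in K$, so that $\mathcal C_d\boxdot K = D\circ(\mathcal C_e\circ K)$ for every divisor $e$ of $d$; this reduces the problem to deciding whether $\mathcal C_e\circ K$ is a \emph{complex} reflection group for some divisor $e$ of $d$, which for $f=1$ is read off from Cohen's 1976 classification of the primitive rank two complex reflection groups, while the case $f=2$ is settled by explicit matrix identifications ($\mathcal C_4\circ_2\mathcal O\simeq\ST(12)$ when $d\equiv 4\pmod 8$, $\mathcal C_8\circ_2\mathcal O\simeq\ST(8)$ when $d\equiv 8\pmod{16}$) together with a count showing that for $16\mid d$ the reflections generate only the proper subgroup $E\circ\ST(4,2,2)$, where $E=\langle z^2,s\rangle$. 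You avoid any appeal to the complex classification: you determine the quaternionic reflections of $G$ in both cosets of $N$ from scratch (your determinant/eigenvalue argument that the reflections in $Ns$ are exactly the scalar multiples of $s$ is sound, and it in fact substantiates steps the paper leaves implicit --- both the ``only if'' half of its reduction and its assertion that the reflections of $\mathcal C_d\boxdot_2\mathcal O$ outside $(\mathcal C_d\circ_2\mathcal O)^\sharp$ are just those of $E$), and you then convert the question into the elementary one of which admissible conjugacy classes generate $K/\{\pm1\}\in\{\Alt(4),\Sym(4),\Alt(5)\}$, with your parity analysis of the pullback ($d\equiv4\pmod8$ for order $4$, $d\equiv8\pmod{16}$ for order $8$) replacing the paper's matrix work in case (iii). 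The paper's route buys brevity and explicit identifications with Shephard--Todd groups that are reused in \S\ref{sec:conj}; yours buys self-containedness and, as a byproduct, the complete list of reflections of $\mathcal C_d\boxdot_f K$. Two small repairs: the parenthetical ``$\alpha=\zeta_m^{\pm1}$'' should read ``$\alpha^{-1}$ is a primitive $m$-th root of unity'' (the eigenvalues of $R(\xi)$ are $\zeta_m^{\pm t}$ with $\gcd(t,m)=1$; this changes nothing, since for $m\in\{4,8\}$ all such exponents $t$ are odd and the congruence conditions are unaffected); and your identification of $R\cap N$ as the subgroup generated by the scalars and the reflections in $N$ tacitly uses that this subgroup is normalised by $s$, which is immediate but should be said.
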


\begin{proof}
The matrices $s = \begin{pmatrix}0&j\\-j&0\end{pmatrix}$ and 
$z = \theta(\zeta_d,1) = \begin{pmatrix}\zeta_d&0\\0&\zeta_d\end{pmatrix}$ 
generate a dihedral group $D$ of order $2d$ and they commute with 
$\theta(1,\xi)$ for all $\xi\in K$. If $f = 1$, then for each divisor 
$e$ of $d$, we have $\mathcal C_d\boxdot K = D\circ(\mathcal C_e\circ K)$.  
Therefore $\mathcal C_d\boxdot K$ is generated by quaternionic reflections 
if and only if $\mathcal C_e\circ K$ is a complex reflection group for some 
divisor $e$ of~$d$. Thus cases \eqref{i:a}, \eqref{i:b} and \eqref{i:d} 
follow from \cite[\S3]{cohen:1976}.

From \cite[Theorem 5.14]{lehrer-taylor:2009} we may suppose that the image
of $\mathcal O$ in $\SU_2(\C)$ is generated by
\[
  g_1 = \tfrac12\begin{pmatrix}-1+i&\phantom{-}1+i\\-1+i&-1-i\end{pmatrix},
  \enspace\text{and}\enspace
  g_2 = \tfrac{1}{\sqrt2}\begin{pmatrix}1+i&0\\0&1-i\end{pmatrix}.
\]
and $\mathcal T$ is the subgroup $\(g_1,g_2^2\)$. Then 
$\mathcal C_d\circ\mathcal O = \(z,g_1,g_2\)$,
$\mathcal C_d\boxdot\mathcal O = \(z,g_1,g_2,s\)$ and
$\mathcal C_d\boxdot_2\mathcal O = \(z^2, g_1, g_2^2, zg_2, s\)$. The dihedral 
group $E = \(z^2,s\)$ is a subgroup of $\mathcal C_d\boxdot_2\mathcal O$.

If $d = 4e$ and $e$ is odd, then $\mathcal C_4\circ_2\mathcal O =
\(z^{2e}, g_1, g_2^2, z^eg_2\)\simeq\ST(12)$ and $\mathcal C_d\boxdot_2\mathcal 
O = E\circ (\mathcal C_4\circ_2\mathcal O)$. Similarly, if $d = 8e$ and $e$ is odd, 
then $\mathcal C_8\circ_2\mathcal O = \(z^{2e}, g_1, g_2^2, z^eg_2\)\simeq\ST(8)$
and $\mathcal C_d\boxdot_2\mathcal O = E\circ (\mathcal C_8\circ_2\mathcal O)$. 
On the other hand, if $d = 16e$, it follows from the description in
\cite[Ch.~5, \S3]{lehrer-taylor:2009} that there are only 6 reflections in 
$\mathcal C_d\circ_2\mathcal O$ and they generate $\ST(4,2,2)$. The other 
reflections in $\mathcal C_d\boxdot_2\mathcal O$ are the $d/2$ reflections in 
$E$; therefore $\mathcal C_{16e}\boxdot_2\mathcal O$ is not generated by
reflections. This completes the proof of~\eqref{i:c}.
\end{proof}

\begin{rem}
Lemma (3.3) of \cite{cohen:1980} states that if $G\subset\U_2(\HH)$ is a 
rank two primitive quaternionic reflection group such that its 
complexification is imprimitive and not monomial, then $G$ is conjugate
to one of the groups listed in Theorem \ref{thm:extST}.

However, in cases \eqref{i:b}, \eqref{i:c} and \eqref{i:d} with $d = 4$,
the group is imprimitive and conjugate to a group in Table \ref{tbl:2}. 
(The assumption in \cite[Lemma (3.2)]{cohen:1980} that a primitive complex
reflection group $H$ remains primitive as a quaternionic group $H^\sharp$
of complex type does not hold. See Theorem \ref{thm:boxdot} below for the 
details.)
\end{rem}

\section{Systems of imprimitivity}\label{sec:imprim}
\begin{thm}
If standard copies $G_1 = G(K_1,H_1,\psi_{r_1})$ and 
$G_2 = G(K_2,H_2,\psi_{r_2})$ are conjugate in $\U_2(\HH)$ and they each 
have a unique system of imprimitivity, then $K_1 = K_2$, $H_1 = H_2$ and
$r_1 = r_2$. 
\end{thm}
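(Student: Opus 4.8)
The plan is to use the fact that any conjugating element must carry the unique system of imprimitivity of $G_1$ to that of $G_2$, which pins the conjugator down to a diagonal matrix; one then reads off how the labelling data $(K,H,r)$ transform, recovering $K$ and $H$ from order invariants and $r$ from a sign analysis on the cyclic part of $K/H$.

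Suppose $g\in\U_2(\HH)$ with $g^{-1}G_1g=G_2$. Since $g$ maps systems of imprimitivity of $G_1$ to those of $G_2$, and each group has the single system $\soi{e_1,e_2}$, the element $g$ permutes the two lines $\langle e_1\rangle$, $\langle e_2\rangle$. The swap $w=\bigl(\begin{smallmatrix}0&1\\1&0\end{smallmatrix}\bigr)$ is the element $\sigma$ and hence lies in both $G_1$ and $G_2$; so if $g$ interchanges the lines I may replace it by $gw$, which is diagonal and still conjugates $G_1$ to $G_2$ because $w\in G_2$. Thus I may assume $g=\mathrm{diag}(\lambda,\mu)$ with $\lambda,\mu\in\sS^3$. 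Projecting the diagonal subgroup of each $G_i$ to its two components and conjugating by $g$ gives $K_2=\lambda^{-1}K_1\lambda$ and $H_2=\lambda^{-1}H_1\lambda$ (and likewise with $\mu$). As $K_1,K_2$ are standard binary dihedral groups, equality of orders forces $m_1=m_2$ and hence $K_1=K_2=K$; as $H_1,H_2$ are the standard cyclic subgroups $\langle\zeta_m^{n_i}\rangle$, equality of orders forces $\ell_1=\ell_2$, so $n_1=n_2$ and $H_1=H_2=H$. In particular $\lambda,\mu\in N_{\sS^3}(K)\cap N_{\sS^3}(H)$, so $\rho(\lambda),\rho(\mu)\in\Aut(K/H)$.

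It remains to prove $r_1=r_2$. Matching the anti-diagonal reflection elements $\bigl(\begin{smallmatrix}0&\eta\\\xi&0\end{smallmatrix}\bigr)$ under conjugation by $g$ requires $\mu^{-1}K\lambda=K$, and since $\lambda,\mu$ normalise $K$ this says $\nu:=\lambda^{-1}\mu\in K$. Tracking the diagonal elements $(\xi,\eta)\in K\times_{\varphi_1}K$ through the conjugation shows that $\varphi_2=\rho(\mu)^{-1}\varphi_1\rho(\lambda)$, in agreement with Lemma~\ref{lemma:conj}. I now restrict to the cyclic subgroup $\bar C=\langle x\rangle/H\cong\Z/n$. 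For $m\ne2$ the subgroup $\langle x\rangle$ is characteristic in $K$, so $\rho(\lambda),\rho(\mu)$ preserve $\bar C$ and act there as multiplication by units $a,c$, while $\varphi_i=\psi_{r_i}$ acts as multiplication by $r_i$. The displayed relation becomes $r_2\equiv a\,c^{-1}r_1\pmod n$.

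The main obstacle is the sign. Because $\nu\in K$, the automorphism $\rho(\nu)=\rho(\lambda)^{-1}\rho(\mu)$ is inner, and an inner automorphism of $K/H$ acts on $\bar C$ as $\pm1$ (trivially when $\nu\in\langle x\rangle$, by inversion when $\nu$ lies in the other coset); hence $a^{-1}c\equiv\pm1$ and therefore $r_2\equiv\pm r_1\pmod n$. This is where the normalisation of Theorem~\ref{thm:A} is decisive: both $r_1,r_2$ satisfy $r=1$ or $1<r\le n/2$ with $\gcd(r,n)=1$, and the competing value $n-r_1$ lies in this range only in degenerate small cases, where $r=1$ is forced in any event. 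Thus the sign is $+$ and $r_1=r_2$. Finally, the excluded case $m=2$ has $n\le 4$, so the same constraints force $r=1$ and the conclusion holds trivially.
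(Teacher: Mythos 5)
Your proof is correct and takes essentially the same route as the paper: uniqueness of the system of imprimitivity forces the conjugating element to preserve $\soi{e_1,e_2}$, hence to be diagonal (or anti-diagonal, reducible to diagonal via the swap $\sigma\in G_2$), after which the data $K$, $H$, $r$ are tracked through the conjugation and pinned down by the standard-copy normalisation. The only difference is one of completeness: where the paper defers to the proof of Cohen's Lemma 2.4 for the relations $g^{-1}K_1g=K_2$, $g^{-1}H_1g=H_2$ and $r_1=r_2$, you supply the details explicitly --- the relation $\varphi_2=\rho(\mu)^{-1}\varphi_1\rho(\lambda)$, the observation that $\lambda^{-1}\mu\in K$ makes the discrepancy inner and hence $\pm1$ on $\(x\)/H$, and the constraint $r\le n/2$ of Theorem~\ref{thm:A} eliminating the sign --- together with a correct separate treatment of the non-characteristic case $m=2$.
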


\begin{proof}
Suppose that $g^{-1}G_1g = G_2$. By uniqueness, $g$ fixes the standard
system of imprimitivity. Thus $g$ is monomial.
As in the proof of \cite[Lemma 2.4]{cohen:1980}, it follows that
$g^{-1}K_1g = K_2$, $g^{-1}H_1g = H_2$ and $r_1 = r_2$. From the
assumption that $G_1$ and $G_2$ are standard copies (see Definitions 
\ref{defn:stdDC}, \ref{defn:stdDD}) it follows that $K_1 = K_2$ and $H_1 = H_2$.
\end{proof}

\begin{lemma}\label{lemma:systems}
Let $G = G(K,H,\varphi)$ and suppose that $\soi{u,v}\ne\soi{e_1,e_2}$ is a 
system of imprimitivity for $G$. Then $|H|\le 2$ and we may suppose that 
either $u = (1,1)$, $v = (1,-1)$ or $u = (1,\theta)$, $v = (\theta,1)$,
$\theta\ne 0$ and $\theta+\overline\theta = 0$. If $|H| = 2$, then 
$\theta^2 = -1$. Furthermore, for all $\eta\in K$ either 
$\theta^{-1}\eta\theta\in\varphi(H\eta)$ or
$-\theta^{-1}\eta\theta\in\varphi(H\eta)$.
\end{lemma}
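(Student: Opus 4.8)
The plan is to analyze what it means for $\soi{u,v}$ to be a second system of imprimitivity, using the explicit generators of $G(K,H,\varphi)$. Recall that $G$ is generated by the diagonal matrices $(\xi,\eta)$ with $\varphi(H\xi)=H\eta$ together with the swap $\sigma=\begin{pmatrix}0&1\\1&0\end{pmatrix}$. A system of imprimitivity $\{\(u\),\(v\)\}$ is preserved by $G$ exactly when every generator either fixes each line or interchanges the two. First I would normalize the pair $(u,v)$. Since the lines must be orthogonal of dimension one (as noted in the reflection-group discussion), and since $\soi{u,v}\ne\soi{e_1,e_2}$, I would argue that both $u$ and $v$ have both coordinates nonzero; scaling on the left by quaternions (which does not change the line) lets me assume $u=(1,\theta)$ for some $\theta\in\HH^\times$. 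Orthogonality $(u\mid v)=0$ together with the one-dimensionality then forces $v$ to be a scalar multiple of $(\theta,-1)$ or $(-\bar\theta,1)$-type vector; the stated normal forms $u=(1,1),\,v=(1,-1)$ and $u=(1,\theta),\,v=(\theta,1)$ with $\theta+\bar\theta=0$ should emerge as the two cases depending on whether $\sigma$ fixes the new lines or swaps them.

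\medskip

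Next I would impose invariance under the diagonal subgroup. The diagonal element $\operatorname{diag}(\xi,\eta)$ sends $u=(1,\theta)$ to $(\xi,\theta\eta)$ (acting on the right), and for the line $\(u\)$ to be permuted we need $(\xi,\theta\eta)$ to be a left scalar multiple of $u$ or of $v$. In the case $u=(1,\theta)$, $v=(\theta,1)$ this means $(\xi,\theta\eta)=\lambda(1,\theta)$ or $\mu(\theta,1)$ for some $\lambda,\mu\in\HH^\times$. The first gives $\lambda=\xi$ and $\theta\eta=\xi\theta$, i.e. $\eta=\theta^{-1}\xi\theta$; the second gives $\mu\theta=\xi$ and $\theta\eta=\mu$, i.e. $\eta=\theta^{-1}\cdot\theta^{-1}\xi\theta$—I would recompute carefully, but the upshot is a condition of the form $\eta\in\{\theta^{-1}\xi\theta,\,-\theta^{-1}\xi\theta\}$ up to the sign ambiguity that $\theta^2=-1$ introduces. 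Running this over all generators $(\xi,\eta)$ with $\eta$ ranging over $\varphi(H\xi)$ yields precisely the final conclusion: for all $\eta\in K$, either $\theta^{-1}\eta\theta\in\varphi(H\eta)$ or $-\theta^{-1}\eta\theta\in\varphi(H\eta)$. (Here I am matching the paper's indexing where the first coordinate ranges over $K$.)

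\medskip

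\emph{The main obstacle} will be pinning down $\theta$ and establishing $|H|\le 2$, together with $\theta^2=-1$ when $|H|=2$. For the size bound, I would examine the reflections $\operatorname{diag}(\xi,1)$ with $1\ne\xi\in H$ (these are the reflections of $G$ from Lemma~\ref{lemma:gen}); such a reflection fixes the line $\(e_1\)$ pointwise-direction-wise but moves a generic line $\(u\)$ with $u=(1,\theta)$ to $\((\xi,\theta)\)$. For $\soi{u,v}$ to be preserved this image must again be $\(u\)$ or $\(v\)$, and I expect that for $|H|\ge 3$ no single $\theta$ can simultaneously satisfy the constraints coming from all such $\xi$, forcing $|H|\le 2$. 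The subtle point is the quaternionic non-commutativity: the condition $(\xi,\theta)\in\lambda(1,\theta)$ reads $\lambda=\xi$ and $\theta=\xi\theta$, which is impossible for $\xi\ne1$, so the image must instead lie on $\(v\)$; juggling these line-membership conditions for two or more distinct $\xi\in H$ is where the real work lies. When $|H|=2$, say $H=\{1,z\}$ with $z=-1$ central, the reflection $\operatorname{diag}(-1,1)$ acting on $u=(1,\theta)$ gives $(-1,\theta)$, and matching this against $\(v\)$ should force the relation $\theta^2=-1$. I would carry out these case analyses explicitly but briefly, treating the two normal forms $u=(1,1)$ and $u=(1,\theta)$ separately, and conclude by recording the universal constraint on $\theta$ displayed in the statement.
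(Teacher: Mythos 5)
Correct, and essentially the same proof as the paper's: you normalize $u=(1,\theta)$, use the swap matrix to split into the two normal forms, impose invariance under the diagonal elements to obtain the conjugation-with-sign condition, and use the reflections $\begin{pmatrix}\xi&0\\0&1\end{pmatrix}$, $\xi\in H$, to bound $|H|$. The two steps you defer close exactly as you predict, and more easily than you fear: if $(\xi,\theta)$ lies on the line spanned by $v=(\theta,1)$ then $\lambda=\theta$ and $\xi=\theta^2$, so $H$ has at most one non-identity element (no further juggling is needed to rule out $|H|\ge 3$), while in the case that a diagonal element $\begin{pmatrix}\xi&0\\0&\eta\end{pmatrix}$ sends $u$ to a multiple of $v$, the relation $\xi=\theta\eta\theta$ upon taking norms gives $\Norm(\theta)=1$, hence $\theta^{-1}=-\theta$ and $\theta^2=-1$, which is precisely what produces the minus sign in the final alternative (your displayed formula $\eta=\theta^{-1}\cdot\theta^{-1}\xi\theta$ should read $\eta=\theta^{-1}\xi\theta^{-1}=-\theta^{-1}\xi\theta$).
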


\begin{proof}
The matrices $s = \begin{pmatrix}0&1\\1&0\end{pmatrix}$ and
$f = \begin{pmatrix}\xi&0\\0&1\end{pmatrix}$ are elements of $G$ (when $\xi\in H$).

By replacing $u$ with a scalar multiple we may suppose that $u = (1,\theta)$ 
for some $\theta\ne 0$.  If $us = (\theta,1)$ is a multiple of $u$, then 
$\theta = \pm1$ and we may take $u = (1,1)$ and $v = (1,-1)$. On the other 
hand, if $\theta\ne \pm$1, then $v = us = (\theta,1)$ is orthogonal to $u$ and 
hence $\theta + \overline\theta = 0$.

If $u = (1,1)$ and $uf$ is a multiple of $u$, then $\xi = 1$; if $uf$ is a
multiple of $(1,-1)$, then $\xi = -1$.  If $u = (1,\theta)$ for some 
$\theta\ne\pm 1$ and $uf$ is a multiple of $u$, then $\xi = 1$. However, 
if $uf$ is a multiple of $(\theta,1)$, then $\xi = \theta^2$ is uniquely 
determined by $\theta$ and therefore $\xi = -1$ since $\theta\ne\pm 1$. 
Thus $|H|\le 2$ and $|H| = 2$ implies $\theta^2 = -1$.

Suppose that $u = (1,\theta)$, $v = (\theta,1)$, $\theta\ne 0$ and 
$\theta+\overline\theta = 0$. If 
$\eta\in K$ and $\lambda\in\varphi(H\eta)$, then
$g = \begin{pmatrix}\eta&0\\0&\lambda\end{pmatrix}\in G$.
If $ug$ is a multiple of $u$, then $\theta^{-1}\eta\theta = \lambda$. 
However, if $ug$ is a multiple of $v$, then $\theta\lambda\theta = \eta$
and on taking norms we find that $\Norm(\theta)^2 = 1$; therefore 
$\theta^2 = -1$ and so $\theta^{-1}\eta\theta = -\lambda$. This completes
the proof.
\end{proof}

\begin{cor}
If $G(K,H,\varphi)$ has more then one system of imprimitivity, then every
reflection has order 2.
\end{cor}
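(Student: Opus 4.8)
The plan is to read the orders of the reflections directly off the list in Lemma~\ref{lemma:gen}, after using Lemma~\ref{lemma:systems} to bound $H$.

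First I would note that $G = G(K,H,\varphi)$ always admits the standard system $\soi{e_1,e_2}$, so the hypothesis that $G$ has more than one system of imprimitivity supplies a second system $\soi{u,v}\ne\soi{e_1,e_2}$. Lemma~\ref{lemma:systems} then yields $|H|\le 2$, and since $-1$ is the only element of order $2$ in $\sS^3$, the only possible non-identity element of $H$ is $-1$.

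Next I would dispose of the two types of reflection described in Lemma~\ref{lemma:gen}. An anti-diagonal reflection $\begin{pmatrix}0&\xi\\\xi^{-1}&0\end{pmatrix}$ with $\xi\in L_\varphi$ squares to the identity, so it has order $2$ whatever $\xi$ is. A diagonal reflection $\begin{pmatrix}\xi&0\\0&1\end{pmatrix}$ with $1\ne\xi\in H$ has order equal to the order of $\xi$; by the previous paragraph $\xi = -1$, so this reflection also has order $2$.

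There is essentially no obstacle here: the corollary is an immediate consequence of the two preceding lemmas. The only point worth recording is that the anti-diagonal reflections are involutions by construction, so a reflection can fail to have order $2$ only when it is diagonal, and such reflections are ruled out by the bound $|H|\le 2$.
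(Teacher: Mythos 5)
Your proof is correct and is exactly the argument the paper intends: the corollary follows immediately by combining Lemma~\ref{lemma:systems} (which forces $|H|\le 2$ once a second system of imprimitivity exists) with the description of the reflections in the proof of Lemma~\ref{lemma:gen} (anti-diagonal reflections are involutions, diagonal ones have order equal to that of an element of $H$). The paper leaves this deduction implicit, and your write-up fills it in along the same lines, including the correct observation that $-1$ is the only involution in $\sS^3$.
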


\subsection{The binary dihedral groups.}
\begin{thm}\label{thm:bin_dihedral}
Suppose that $\soi{u,v}\ne\soi{e_1,e_2}$ is a system of imprimitivity for
the standard copy 
$G = G(\mathcal D_m,\mathcal C_\ell,\psi_r)$. Then $v$ is orthogonal to $u$, 
$\ell$ is 1 or 2 and the possible representatives for $u$ are $(1,1)$, 
$(1,ai)$, $(1,j)$ and $(1,ck)$ for some non-zero $a, c\in\R$.
\begin{enumerate}[\enspace(i)]
\item $\soi{(1,1),(1,-1)}$ is a system of imprimitivity for $G$
if and only if $r = 1$.
\item If $\soi{(1,ai),(ai,1)}$ is a system of imprimitivity for $G$, then
$r = 1$. If $\ell = 1$, it is a system of imprimitivity for all $m$; if
$\ell = 2$, it is a system of imprimitivity for $G$ if and only if 
$a \in\{-1,0,1\}$.
\item
If $\ell = 1$, then $\soi{(1,j),(j,1)}$ is a system of 
imprimitivity for $G$ if and only if $m\equiv 2\pmod 4$ and $r = m-1$ or 
$m = r = 1$. If $\ell = 2$, it is a system of imprimitivity for $G$ if and 
only if $m \in \{1,2\}$ and $r = 1$.
\item 
$\soi{(1,ck),(ck,1)}$ is a system of imprimitivity for $G$ for all $c\in\R$
if and only if $m = \ell = 1$. Otherwise, if $\ell = 1$ and $m \ge 2$, 
$\soi{(1,k),(k,1)}$ is a system of imprimitivity for $G$ if and only if 
$m\equiv 2\pmod 4$ and  $r = m-1$. If $\ell = 2$, $\soi{(1,k),(k,1)}$ is 
a system of imprimitivity for $G$ if and only if $m \in \{1,2\}$ and $r = 1$.
\end{enumerate}
\end{thm}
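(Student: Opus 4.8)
The plan is to analyse the standard copy $G = G(\mathcal D_m,\mathcal C_\ell,\psi_r)$ through the lens of Lemma~\ref{lemma:systems}. Since we are given a second system of imprimitivity $\soi{u,v}\ne\soi{e_1,e_2}$, that lemma immediately forces $|H|\le 2$, i.e.\ $\ell\in\{1,2\}$, and supplies the normal form: either $u=(1,1)$, $v=(1,-1)$, or $u=(1,\theta)$, $v=(\theta,1)$ with $\theta+\overline\theta=0$ (a pure quaternion). In the latter case we may write $\theta = ai+bj+ck$ and, by conjugating $G$ by a diagonal element $\begin{pmatrix}\xi&0\\0&\xi\end{pmatrix}$ with $\xi\in\sS^3$ normalising $K=\mathcal D_m=\(\zeta_{2m},j\)$, reduce $\theta$ to lie in one of the coordinate directions. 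The four claimed representatives $(1,1)$, $(1,ai)$, $(1,j)$, $(1,ck)$ should correspond precisely to these reduced forms, so the first task is to pin down which $\theta$ survive conjugation and why $\theta=bj$ is absorbed into the $j$-direction while $i$ and $k$ directions behave asymmetrically (reflecting that $\zeta_{2m}$ lives in the $\C=\R\(1,i\)$ plane).

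**The core mechanism.**
The heart of the argument is the last conclusion of Lemma~\ref{lemma:systems}: for every $\eta\in K$, either $\theta^{-1}\eta\theta\in\psi_r(H\eta)$ or $-\theta^{-1}\eta\theta\in\psi_r(H\eta)$. I would apply this to the two generators $x\mapsto\zeta_{2m}$ and $y\mapsto j$ of $K$, using the explicit action $\psi_r(Hx)=Hx^r$, $\psi_r(Hy)=Hy^{-1}$. For each candidate $\theta$ one computes $\theta^{-1}\zeta_{2m}\theta$ and $\theta^{-1}j\theta$ by hand and tests membership in the relevant coset of $H=\(\zeta_{2m}^n\)$ (where $2m=n\ell$), up to sign. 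Case~(i), $\theta=\pm1$, is the degenerate branch where $u=(1,1)$: conjugation is trivial, the condition reads $\eta\in\psi_r(H\eta)$ for all $\eta$, which on $x$ forces $x\equiv x^r\pmod H$, i.e.\ $r\equiv1$, giving exactly $r=1$. The remaining cases are the genuine $\theta^2=-1$ or pure-imaginary computations, and each part (ii)--(iv) amounts to solving the resulting congruences modulo $2m$ for the allowed $r$, together with the $\ell=1$ versus $\ell=2$ dichotomy that changes $H$ from trivial to $\{\pm1\}$.

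**Carrying out the cases.**
For part~(ii), $\theta=ai$: since $i$ commutes with $\zeta_{2m}\in\C$, the $x$-condition gives $\zeta_{2m}\equiv\zeta_{2m}^r$ up to sign and coset, forcing $r=1$; the $y$-condition becomes $i^{-1}ji=-j\in\pm\psi_r(Hj)=\pm Hj^{-1}$, which holds when $\ell=1$ unconditionally but when $\ell=2$ (so $-1\in H$) constrains the real parameter $a$ through a norm computation, yielding $a\in\{-1,0,1\}$. For parts~(iii) and~(iv), $\theta=bj$ or $\theta=ck$: here $j$ and $k$ \emph{anticommute} with $i$, so $\theta^{-1}\zeta_{2m}\theta=\overline{\zeta_{2m}}=\zeta_{2m}^{-1}$, and the $x$-condition reads $\zeta_{2m}^{-1}\equiv\pm\zeta_{2m}^r\pmod H$. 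With $H$ trivial ($\ell=1$) this forces $r\equiv m-1\pmod{2m}$ together with a parity condition $m\equiv2\pmod4$ (arising from whether the sign $-1=\zeta_{2m}^m$ is available), while the $\ell=2$ case collapses to the very small groups $m\in\{1,2\}$, $r=1$. The infinite family in~(iv) for $m=\ell=1$ is the trivial group case where every $c$ works.

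**Main obstacle.**
The principal difficulty I anticipate is \emph{not} any single congruence but rather the bookkeeping of the $\pm$ sign throughout: Lemma~\ref{lemma:systems} only guarantees $\theta^{-1}\eta\theta\in\pm\psi_r(H\eta)$, and the sign is entangled with whether $-1$ already lies in $H$. When $\ell=1$ the sign carries genuine information (it distinguishes $m\equiv2$ from $m\equiv0\pmod4$), whereas when $\ell=2$ the sign is free and the constraints tighten in a completely different way. Keeping the two regimes separate, and verifying that the candidate $\theta$ actually yields a \emph{consistent} choice of sign simultaneously for all $\eta$ (not just the generators), is where the calculation must be done carefully rather than mechanically. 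A secondary subtlety is confirming that the reduction to the four coordinate representatives is exhaustive --- that no ``mixed'' pure quaternion $\theta=ai+bj+ck$ gives a new system not conjugate to one of these --- which I would settle by observing that the normaliser of $\mathcal D_m$ in $\sS^3$ acts on the pure-imaginary directions with orbits $\{1\}$-axis fixed (the $i$-direction), $\{j,k\}$-plane rotated, so that a generic $\theta$ is conjugate to one in the $i$- or $j$-axis, reducing the genuinely new possibilities to those tabulated.
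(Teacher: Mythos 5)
Your outline follows the paper's strategy at the top level (invoke Lemma \ref{lemma:systems} to force $\ell\le 2$ and the normal form for $u,v$, then test generators against the condition $\theta^{-1}\eta\theta\in\pm\psi_r(H\eta)$ and solve congruences), but there is a genuine gap at the step on which everything else rests: the reduction of $\theta$ to the coordinate directions. You propose to conjugate $G$ by $\begin{pmatrix}\xi&0\\0&\xi\end{pmatrix}$ with $\xi$ in the normaliser of $\mathcal D_m$ in $\sS^3$, claiming a generic pure quaternion is thereby moved onto the $i$- or $j$-axis. This fails twice over. First, for $m\ge 2$ that normaliser is \emph{finite} ($\mathcal D_{2m}$ for $m\ge 3$, $\mathcal O$ for $m=2$), so its orbits on directions are finite: it rotates the $(j,k)$-plane only through the discrete angles $\pi/m$, and for $m\ge 3$ it preserves both the $i$-axis and the $(j,k)$-plane, so a mixed direction $\theta = ai+bj+ck$ with $a\ne 0$ and $(b,c)\ne(0,0)$ is \emph{never} conjugate to a coordinate axis. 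Second, even when a suitable $\xi$ exists, Lemma \ref{lemma:conj}\,(ii) shows conjugation replaces $\psi_r$ by $\rho(\xi)\psi_r\rho(\xi)^{-1}$, which in general is not the automorphism of a standard copy; the theorem is about systems of imprimitivity of one fixed standard copy, so conclusions about a conjugate group do not transfer back. The paper needs no conjugation at all: exhaustiveness is squeezed out of Lemma \ref{lemma:systems} itself. Writing $\theta = ai+bj+ck$, the condition $\theta^{-1}\zeta_{2m}\theta = \pm\zeta_{2m}^r$ forces $\theta^{-1}i\theta$ to lie in the real span of $1$ and $i$ (this uses $m\ge 2$, so that $\zeta_{2m}\notin\R$), and the explicit formula \eqref{eq:i} then gives $ab=ac=0$; likewise $\theta^{-1}j\theta=\pm j$ together with \eqref{eq:j} gives $ab=bc=0$. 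Hence $\theta$ is $ai$, $bj$ or $ck$ outright, and the generator $f$ (present when $\ell=2$) forces $\theta^2=-1$. Your proof is repaired by applying your own ``core mechanism'' to a general mixed $\theta$, rather than only to the already-reduced candidates.

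Two secondary points. The congruence analysis in your third paragraph otherwise matches the paper, but the source of the condition $m\equiv 2\pmod 4$ in parts (iii) and (iv) is misattributed: the imprimitivity condition alone only forces $r=m-1$ or $r=2m-1$, and the parity restriction comes from Theorem \ref{thm:A} --- one needs $\gcd(\kappa,\nu)=1$, with $\kappa = 2m/\gcd(2m,r+1)$ and $\nu = 2m/\gcd(2m,r-1)$, for $r=m-1$ to be an admissible parameter of a standard copy at all; it is not about ``availability of the sign $-1=\zeta_{2m}^m$''. Also, your worry about consistency of signs across all $\eta\in K$ is not where the difficulty lies: in the paper each generator is tested independently (a multiple of $u$ or of $v$, each branch giving its own equation), and no global sign choice is ever needed.
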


\begin{proof}
From the previous lemma $\ell\le 2$. It follows from \eqref{eq:std-gens} with 
$x = \eta = \zeta_{2m}$ and $y = j$ that $G(\mathcal D_m,\mathcal C_2,\psi_r)$
is generated by
\begin{equation}\label{eq:dc-gens}
  s = \begin{pmatrix}0&1\\1&0\end{pmatrix},\quad
  f = \begin{pmatrix}-1&0\\0&1\end{pmatrix},\quad
  g = \begin{pmatrix}\eta&0\\0&\eta^r\end{pmatrix},\quad
  h = \begin{pmatrix}j&0\\0&-j\end{pmatrix}
\end{equation}
and $G(\mathcal D_m,\id,\psi_r)$ is the subgroup generated by $s$, 
$g$ and $h$.

\medskip
\noindent (i)\enspace
If $u = (1,1)$, then $ug$ is a multiple of $u$ or $v = (1,-1)$ if and
only if $\eta^{r-1} = \pm1$. Therefore $r$ is 1 or $m+1$. But $r = m+1$ 
is excluded by Theorem \ref{thm:A} (we assume that $r \le m/2$).
If $\ell = 1$, the group is generated by $s$, $g$ and $h$.  Thus the 
same argument implies $r = 1$. This completes the proof of (i).

\smallskip
Suppose that $u = (1,\theta)$, $v = (\theta,1)$, $\theta\ne 0$ and 
$\theta+\overline\theta = 0$. Writing $\theta = ai+bj+ck$ we have 
\begin{align}\label{eq:i}
  \theta i\theta &= (-a^2+b^2+c^2)i - 2abj - 2ack\enspace\text{and}\\
  \label{eq:j}
  \theta j\theta &= -2abi + (a^2-b^2+c^2)j - 2bck.
\end{align}
We also have $-\theta^2 = \Norm(\theta) = a^2 + b^2 + c^2$. It follows 
from Lemma \ref{lemma:systems} that $\theta^{-1}\eta\theta = \pm \eta^r$
and so $\theta^{-1}i\theta$ is a real linear combination of 1 and $i$.
Therefore, from \eqref{eq:i} we have $ab = ac = 0$.  Furthermore, 
$\theta^{-1}j\theta = \pm j$ and from \eqref{eq:j} we have $ab = bc = 0$.
Thus $\theta$ is $ai$, $bj$ or $ck$. If $uf$ is a multiple of $u$, then
$\theta = 0$. But then $u$ and $v$ are the standard basis vectors.  If $uf$
is a multiple of $v$, then $\theta^2 = -1$. Consequently, if $\ell = 2$,
then $\theta\in\{\pm i,\pm j,\pm k\}$ and interchanging $u$ and $v$, if
necessary, we may suppose that $\theta\in\{i,j,k\}$.

\medskip
\noindent (ii)\enspace
If $u = (1,\theta)$ and $\theta = ai$, it follows from Lemma \ref{lemma:systems}
that $\pm\eta^r = \theta^{-1}\eta\theta = \eta$, hence $r = 1$. The matrices $s$,
$g$ and $h$ of \eqref{eq:dc-gens} generate $G(\mathcal D_m,\id,\psi_r)$
and a straightforward calculation shows that they preserve $\soi{u,v}$.

We have shown above that $\theta = \pm i$ when $\ell = 2$, and we may choose 
$\theta = i$. In this case $uf$ is a multiple of $u$.  This completes the
proof of (ii).

\medskip
\noindent (iii)\enspace
Suppose that $u = (1,\theta)$ and $\theta = bj$. If $uh$ is a multiple of $u$, 
then $b = 0$ and $u, v$ is the standard basis. If $uh$ is a multiple of $v$, 
then $b = \pm 1$. Thus we may suppose that $u = (1,j)$.

From Lemma \ref{lemma:systems} we have $\pm\eta^r = \theta^{-1}\eta\theta$
and in this case $\theta^{-1}\eta\theta = \eta^{-1}$. Therefore either 
$\eta^r = \eta^{-1} = \eta^{2m-1}$ or $\eta^r = \eta^{m-1}$, whence $r = 2m-1$
or $r = m-1$. However, it follows from Theorem \ref{thm:A} that either $r = 1$ 
or $r\le n/2$, where $n = 2m/\ell$. Therefore, if $\ell = 2$, the groups 
$G(\mathcal D_1,\mathcal C_2,\psi_1)$ and $G(\mathcal D_2,\mathcal C_2,\psi_1)$ 
are the only possibilities. It can be checked directly that $\soi{(1,j),(j,1)}$ 
is a system of imprimitivity for both groups.

If $\ell = 1$ and $r = 1$, then $m\in\{1,2\}$ and 
$G(\mathcal D_1,\id,\psi_1)$ and $G(\mathcal D_2,\id,\psi_1)$ 
are the only possibilities. This leaves the case where $\ell = 1$ and $m > 2$.
Then $r = m-1$ and from Theorem \ref{thm:A} we require $\gcd(\kappa,\nu) = 1$, 
where $\kappa = 2m/\gcd(2m,r+1)$ and $\nu = 2m/\gcd(2m,r-1)$.  This holds if
and only if $m \equiv 2\pmod 4$.

\medskip
\noindent (iv)\enspace
Suppose that $u = (1,\theta)$ and $\theta = ck$. We have 
$\pm\eta^r = \theta^{-1}\eta\theta = \eta^{-1}$. Thus $r = 1$ implies
$\eta^2 = \pm 1$ and hence $m\le 2$. If $\ell = 2$, the argument of 
part~(iii) shows that $G(\mathcal D_1,\mathcal C_2,\psi_1)$ and 
$G(\mathcal D_2,\mathcal C_2,\psi_1)$ are the only possibilities.  
However $\soi{(1,ck),(ck,1)}$ is a system of imprimitivity for 
$G(\mathcal D_1,\id,\psi_1)$ without restriction on~$c$ whereas 
$G(\mathcal D_2,\id,\psi_1)$ has a system of 
imprimitivity of this form if and only if $c\in\{-1, 0, 1\}$.

Suppose that $m > 2$. As in the proof of part (iii) we deduce from 
Lemma \ref{lemma:systems} that $r = 2m-1$ or $r = m-1$. Thus there
are no further possibilities when $\ell = 2$.

Finally, if $\ell = 1$ and $m > 2$, then $r = m-1$ and $\soi{(1,k),(k,1)}$
is a system of imprimitivity for $G(\mathcal D_m,\id,\psi_{m-1})$ 
if and only if $m \equiv 2 \pmod 4$.
\end{proof}	

Table \ref{tbl:3} summarises these results.
See Theorems \ref{thm:ST} and \ref{thm:DmC2} for the isomorphisms 
$G(\mathcal D_m,\id,\psi_1)\simeq\ST(2m,m,2)$ and 
$G(\mathcal D_{2m},\id,\psi_{2m-1})\simeq G(\mathcal D_m,\mathcal C_2,\psi_1)$, 
$m$ odd.

\begin{table}[ht]
\caption{Systems of imprimitivity $\soi{u,v}$ for rank two quaternionic 
reflection groups}
\label{tbl:3}
\smallskip
\centering
\begin{tabular}{l|l|l}
&\small Representatives for $u\ne (1,0)$\\[2pt]
\toprule
$G(\mathcal D_1,\id,\psi_1)$& $(1,1)$, $(1,ai)$, $(1,j)$, $(1,ck)$&
  $\simeq\ST(2,1,2)$\\
$G(\mathcal D_2,\id,\psi_1)$& $(1,1)$, $(1,ai)$, $(1,j)$, $(1,k)$&
  $\simeq\ST(4,2,2)$\\
$G(\mathcal D_1,\mathcal C_2,\psi_1)$& $(1,1)$, $(1,i)$, $(1,j)$, $(1,k)$&
  $\simeq\ST(4,2,2)$\\
$G(\mathcal D_2,\mathcal C_2,\psi_1)$& $(1,1)$, $(1,i)$, $(1,j)$, $(1,k)$\\
\midrule
$G(\mathcal D_m,\id,\psi_1)$&$(1,1)$, $(1,ai)$&$\simeq\ST(2m,m,2)$\\
$G(\mathcal D_m,\mathcal C_2,\psi_1)$&$(1,1)$, $(1,i)$&$m > 2$\\
$G(\mathcal D_{2m},\id,\psi_{2m-1})$&$(1,j)$, $(1,k)$&
  $\simeq G(\mathcal D_m,\mathcal C_2,\psi_1)$, $m$ odd\\
\bottomrule
\end{tabular}

\smallskip
$a, c\in \R$
\end{table}

\begin{rem}
The matrix $T$ defined in \eqref{eq:RST} is a reflection. It normalises
$G(\mathcal D_m,\mathcal C_2,\psi_1)$ and preserves $\soi{(1,i),(i,1)}$.
Thus $G(\mathcal D_m,\mathcal C_2,\psi_1)\(T\)$ is imprimitive and
isomorphic to
$G(\mathcal D_m,\mathcal C_4,\psi_1)$,
$G(\mathcal D_{4m},\id,\psi_{2m-1})$,
$G(\mathcal D_{2m},\mathcal C_2,\psi_{m-1})$ or
$G(\mathcal D_{4m},\id,\psi_{2m+1})$ 
for $m \equiv 0,1,2,3\pmod 4$. 

\end{rem}

\subsection{The binary tetra-, octa- and icosahedral groups.}
Throughout this section $\delta = \frac{1}{\sqrt2}(i-j)$, $\beta$
is the central automorphism of $\mathcal O$ and $\Theta$ is
an outer automorphism of $\mathcal I$ defined in Lemma \ref{lemma:theta}.

If $G(K,H,\varphi)$ is a quaternionic reflection group with a system 
of imprimitivity $\Sigma$ in addition to $\soi{e_1,e_2}$, it follows
from Lemma \ref{lemma:systems} $H$ is $\id$ or $\mathcal C_2$ and
$\Sigma = \soi{(1,1),(1,-1)}$ or $\soi{(1,\theta),(\theta,1)}$, where
$\theta\ne 0$ and $\theta+\overline\theta = 0$.

We consider the groups from Table \ref{tbl:2} with $|H|\le 2$ and use 
the matrices
\begin{equation}\label{eq:mats}
  s = \begin{pmatrix}0&1\\1&0\end{pmatrix},\quad
  f = \begin{pmatrix}-1&0\\0&1\end{pmatrix},\quad
  g_\xi = \begin{pmatrix}\xi&0\\0&\varphi(\xi)\end{pmatrix}\enspace (\xi\in K).
\end{equation}
If $H = \mathcal C_2$, then $G$ is generated by $s$, $f$ and $g_\xi$ for 
$\xi$ in a set of generators of $K$. (If $H = \id$, omit $f$.)

\begin{thm}\label{thm:binpol_11}
Suppose that $K$ is $\mathcal T$, $\mathcal O$ or $\mathcal I$.
Then $\Sigma = \soi{(1,1),(1,-1)}$ is a system of imprimitivity for a
quaternionic reflection group $G = G(K,H,\varphi)$ if and only if $G$ is 
\[
  G(\mathcal O,\mathcal C_2,\id),\enspace G(\mathcal O,\id,\beta)\enspace
  \text{or}\enspace G(\mathcal I,\mathcal C_2,\id).
\]
\end{thm}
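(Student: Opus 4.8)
The plan is to turn the geometric condition into an algebraic condition on the diagonal part of $G$ and then to test the finitely many candidate groups. Set $u=(1,1)$ and $v=(1,-1)$. First I would observe that the generators $s$ and $f$ of \eqref{eq:mats} always preserve $\Sigma$: one computes $us=u$ and $vs=-v$, so $s$ fixes both lines, while $uf=-v$ and $vf=-u$, so $f$ interchanges them. Hence $\Sigma$ is a system of imprimitivity for $G$ if and only if every diagonal element preserves it. For a diagonal element $\begin{pmatrix}\xi&0\\0&\eta\end{pmatrix}\in G$ (so $H\eta=\varphi(H\xi)$) we have $u\begin{pmatrix}\xi&0\\0&\eta\end{pmatrix}=(\xi,\eta)$, which is a left scalar multiple of $u$ precisely when $\eta=\xi$ and of $v$ precisely when $\eta=-\xi$. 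Since these diagonal matrices with $\eta=\pm\xi$ form a subgroup, $\Sigma$ is a system of imprimitivity if and only if every diagonal element $\begin{pmatrix}\xi&0\\0&\eta\end{pmatrix}$ of $G$ satisfies $\eta=\pm\xi$.

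Next I would record what this says about $\varphi$, distinguishing the two cases $|H|\le 2$ permitted by Lemma~\ref{lemma:systems}. When $H=\mathcal C_2$ the second coordinate $\eta$ runs over the whole coset $\varphi(H\xi)=H\eta$, so the requirement $\{\eta,-\eta\}\subseteq\{\xi,-\xi\}$ forces $\varphi(H\xi)=H\xi$ for every $\xi$; that is, $\varphi=\id$. When $H=\id$ the automorphism $\varphi$ of $K$ must satisfy $\varphi(\xi)=\epsilon(\xi)\,\xi$ with $\epsilon(\xi)\in\{\pm1\}$, and since $\varphi$ is multiplicative the map $\epsilon\colon K\to\{\pm1\}$ is a homomorphism. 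So the $H=\id$ case reduces to describing the homomorphisms from $K$ to $\{\pm1\}$, equivalently the $2$-torsion of $K^{\mathrm{ab}}$.

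Finally I would run through the reflection groups $G(K,H,\varphi)$ of Table~\ref{tbl:2} with $|H|\le 2$ --- the only candidates, again by Lemma~\ref{lemma:systems}. For $H=\mathcal C_2$ the criterion $\varphi=\id$ keeps exactly $G(\mathcal O,\mathcal C_2,\id)$ and $G(\mathcal I,\mathcal C_2,\id)$ and discards $G(\mathcal T,\mathcal C_2,\rho(\delta))$ and $G(\mathcal I,\mathcal C_2,\Theta)$. For $H=\id$ the abelianisations settle matters: $\mathcal T^{\mathrm{ab}}\simeq\mathcal C_3$ and $\mathcal I$ is perfect, so in those two cases the only admissible $\varphi$ is $\id$, which is never a reflection group for these $K$; whereas $\mathcal O^{\mathrm{ab}}\simeq\mathcal C_2$ gives a unique nontrivial $\epsilon$, with kernel $\mathcal T$, whose associated automorphism $\xi\mapsto\epsilon(\xi)\,\xi$ is precisely $\beta$. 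This retains $G(\mathcal O,\id,\beta)$ and eliminates $G(\mathcal T,\id,\rho(\delta))$, $G(\mathcal O,\id,\rho(\delta))$, $G(\mathcal I,\id,\Theta)$ and $G(\mathcal I,\id,\rho(j))$, leaving exactly the three groups claimed.

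The main obstacle is the $H=\id$ case: one has to rule out the genuinely order-two automorphisms $\rho(\delta)$, $\Theta$ and $\rho(j)$. The homomorphism-and-abelianisation argument does this uniformly and avoids case-by-case quaternion arithmetic; the only residual check is that $\rho(\delta)$ is neither $\id$ nor $\beta$ on $\mathcal O$, for which a single evaluation such as $\rho(\delta)(i)=-j$ suffices.
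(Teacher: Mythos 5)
Your proof is correct, and for the hard (elimination) direction it takes a genuinely different route from the paper. The paper works by explicit quaternion arithmetic: for each group to be excluded it exhibits a single offending diagonal element --- $\mathrm{diag}(i,-j)$ in the three $\rho(\delta)$ groups, $\mathrm{diag}(j,-k)$ in the two $\Theta$ groups, $\mathrm{diag}(\varpi,\varpi i)$ in $G(\mathcal I,\id,\rho(j))$ --- which visibly moves $(1,1)$ off the two lines, and then, exactly as you do, observes that the matrices of \eqref{eq:mats} preserve $\Sigma$ when $\varphi$ is $\id$ or $\beta$. You instead convert preservation of $\Sigma$ into a necessary and sufficient condition on the diagonal subgroup ($\eta=\pm\xi$), read it as $\varphi=\id$ when $H=\mathcal C_2$ and as $\varphi(\xi)=\epsilon(\xi)\xi$ for a sign character $\epsilon\colon K\to\{\pm1\}$ when $H=\id$, and finish with the abelianisations $\mathcal T^{\mathrm{ab}}\simeq\mathcal C_3$, $\mathcal O^{\mathrm{ab}}\simeq\mathcal C_2$ and the perfectness of $\mathcal I$. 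Your route buys three things: it is uniform (the single evaluation $\rho(\delta)(i)=-j$ replaces three separate computations); it explains structurally why $\beta$ is the unique survivor with $H=\id$ (it is the twist by the sign character of $\mathcal O$); and it applies to an arbitrary automorphism $\varphi$, not only to the conjugacy representatives normalised in Lemmas \ref{lemma:1} and \ref{lemma:theta}, so it does not depend on reading the candidates off Table \ref{tbl:2}. The paper's route buys brevity. One step you use implicitly should be made explicit in a final write-up: every element of $G(K,H,\varphi)$ is either diagonal or equal to $s$ times a diagonal element of $G$, which, combined with $us=u$ and $vs=-v$, is what justifies testing only diagonal elements; the remark that the matrices with $\eta=\pm\xi$ form a subgroup is true but not needed for this reduction.
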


\begin{proof}
We have $\rho(\delta)(i) = -j$ and 
$\Theta(j) = -k$. Therefore $\begin{pmatrix}i&0\\0&-j\end{pmatrix}$ is
contained in $G(\mathcal T,\mathcal C_2,\rho(\delta))$, 
$G(\mathcal T,\id,\rho(\delta))$ and $G(\mathcal O,\id,\rho(\delta))$
whereas $\begin{pmatrix}j&0\\0&-k\end{pmatrix}$ belongs to
$G(\mathcal I,\mathcal C_2,\Theta)$ and $G(\mathcal I,\id,\Theta)$.
Furthermore $\varpi = \tfrac12(-1 + i + j + k)\in\mathcal I$,  
$\rho(j)(\varpi) = \varpi i$ and so
$\begin{pmatrix}\varpi&0\\0&\varpi i\end{pmatrix}$ is in
$G(\mathcal I,\id,\rho(j))$.
Consequently none of these groups preserve $\Sigma$.

If $\varphi = \id$ or $\varphi = \beta$ it is clear that the matrices of \eqref{eq:mats} preserve $\Sigma$. Therefore $\Sigma$ is a system of
imprimitivity for $G(\mathcal O,\mathcal C_2,\id)$, 
$G(\mathcal O,\id,\beta)$ and $G(\mathcal I,\mathcal C_2,\id)$.
\end{proof}

\begin{thm}\label{thm:binpol_theta}
Suppose that $K$ is $\mathcal T$, $\mathcal O$ or $\mathcal I$. If 
$\theta\ne 0$ and $\theta+\overline\theta=0$,
then $\soi{(1,\theta),(\theta,1)}$ is a system of imprimitivity 
for a quaternionic reflection group $G = G(K,H,\varphi)$ from 
Table \ref{tbl:2} if and only if one of the following holds.
\begin{enumerate}[\enspace(i)]
\item $\theta = \delta$ and $G$ is $G(\mathcal T,\mathcal C_2,\rho(\delta))$.
\item $\theta = r\delta$ for some $r\in\R$ and $G$ is 
$G(\mathcal T,\id,\rho(\delta))$ or $G(\mathcal O,\id,\rho(\delta))$.
\item $\theta = rj$ for some $r\in\R$ and $G$ is $G(\mathcal I,\id,\rho(j))$.
\end{enumerate}
\end{thm}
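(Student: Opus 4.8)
The plan is to reduce the geometric condition to a statement about conjugation in $\sS^3$ and then run through the rows of Table~\ref{tbl:2} according to the type of $\varphi$. By Lemma~\ref{lemma:systems} a system $\soi{(1,\theta),(\theta,1)}$ with $\theta+\overline\theta=0$ forces $|H|\le 2$, so only the rows with $H=\id$ or $H=\mathcal C_2$ need be considered. For these the group is generated by the matrices $s,f,g_\xi$ of \eqref{eq:mats}, and $\soi{(1,\theta),(\theta,1)}$ is a system if and only if each generator preserves it. Now $s$ interchanges $(1,\theta)$ and $(\theta,1)$, so it always does; when $H=\mathcal C_2$ the element $f$ preserves the system exactly when $\theta^2=-1$; and $g_\eta$ preserves it exactly when $\theta^{-1}\eta\theta\in\pm\varphi(H\eta)$. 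Thus the necessary condition of Lemma~\ref{lemma:systems} is also sufficient, and the whole problem reduces to solving $\theta^{-1}\eta\theta\in\pm\varphi(H\eta)$ for every $\eta\in K$.

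First I would recast this as a condition on the involution $c_\theta\colon\eta\mapsto\theta^{-1}\eta\theta$. Writing $\theta=r\hat\theta$ with $\hat\theta\in\sS^3$ a purely imaginary unit, $c_\theta=c_{\hat\theta}$ preserves the real part and hence $\Trace$. When $H=\mathcal C_2$ the condition says precisely that $c_\theta$ induces $\varphi$ in $\Aut(K/\mathcal C_2)$. When $H=\id$ the sign $\epsilon(\eta)$ in $\theta^{-1}\eta\theta=\epsilon(\eta)\varphi(\eta)$ defines a homomorphism $K\to\mathcal C_2$; since $\mathcal T$ has abelianisation $\mathcal C_3$ and $\mathcal I$ is perfect, $\epsilon$ is trivial there, while for $\mathcal O$ it is trivial or the sign character with kernel $\mathcal T$, which is exactly $\beta$. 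In all cases the condition becomes: $c_\theta$ realises $\varphi$ on $K/H$, up to composition with $\beta$ when $K=\mathcal O$.

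I would then split according to $\varphi$. The automorphism $\Theta$ is eliminated by a trace obstruction: $c_\theta$ preserves $\Trace$, but $\Trace(\sigma)=\tau^{-1}$ while $\Trace(\pm\Theta(\sigma))=\mp\tau$, and $\tau^{-1}\ne\pm\tau$; hence no $\theta$ realises $\Theta$, so $G(\mathcal I,\mathcal C_2,\Theta)$ and $G(\mathcal I,\id,\Theta)$ carry no such system. For $\varphi=\id$ the requirement that the half-turn $c_{\hat\theta}$ fix or negate every element of $K$ is impossible --- testing $\varpi\in\mathcal O$ and $\sigma\in\mathcal I$ shows no purely imaginary unit centralises $K/\mathcal C_2$ --- so $G(\mathcal O,\mathcal C_2,\id)$ and $G(\mathcal I,\mathcal C_2,\id)$ are excluded. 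For $\varphi=\beta$ the condition forces $c_\theta$ trivial on $\mathcal T$, whence $\theta\in\R$, contradicting $\theta+\overline\theta=0$; so $G(\mathcal O,\id,\beta)$ is excluded. (Each of these three groups instead carries the system $\soi{(1,1),(1,-1)}$ of Theorem~\ref{thm:binpol_11}.)

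Finally, for the inner automorphisms $\rho(\delta)$ and $\rho(j)$ the equation $\theta^{-1}\eta\theta=\delta\eta\delta^{-1}$ (respectively $j\eta j^{-1}$) for all $\eta\in K$ says that $\theta\delta$ (respectively $\theta j$) centralises $K$. Since each $K$ contains a quaternion subgroup $\mathcal D_2$ whose elements span $\HH$ over $\R$, we have $C_{\HH^\times}(K)=\R^\times$, so $\theta\in\R\delta$ (respectively $\theta\in\R j$). When $H=\id$ every nonzero real multiple is admissible, giving the infinite families $\theta=r\delta$ for $G(\mathcal T,\id,\rho(\delta))$ and $G(\mathcal O,\id,\rho(\delta))$ and $\theta=rj$ for $G(\mathcal I,\id,\rho(j))$; for $\mathcal O$ one checks that the $\beta$-twisted alternative $c_\theta=\beta\rho(\delta)$ again forces $\theta\in\R\delta$ but is then incompatible with the sign on $\mathcal O\setminus\mathcal T$, so nothing new appears. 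When $H=\mathcal C_2$ the constraint $\theta^2=-1$ collapses $\R\delta$ to $\theta=\pm\delta$, a single system, giving case~(i). I expect the main obstacle to be the bookkeeping of the sign $\epsilon$ that turns the problem into the clean conjugation statement; once that is set up, the trace obstruction for $\Theta$ and the centraliser identity $C_{\HH^\times}(K)=\R^\times$ carry the rest.
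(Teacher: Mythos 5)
Your proposal is correct, and although it starts from the same reduction as the paper --- Lemma \ref{lemma:systems} limits $H$ to $\id$ or $\mathcal C_2$, and checking the generators \eqref{eq:mats} turns the question into the conjugation condition on $\theta^{-1}\eta\theta$, essentially the paper's \eqref{eq:theta} --- it resolves that condition by a genuinely different method. The paper computes in coordinates: it writes $\theta = ai+bj+ck$, applies \eqref{eq:i} and \eqref{eq:j} to the test elements $i$ and $j$, and then feeds $\varpi$ (and, for $\Theta$, the generator $\sigma$) into the resulting constraints to solve for $a,b,c$ case by case. You argue structurally instead: the sign in $\theta^{-1}\eta\theta=\epsilon(\eta)\varphi(\eta)$ is a character $K\to\mathcal C_2$, hence trivial for $\mathcal T$ (abelianisation $\mathcal C_3$) and $\mathcal I$ (perfect), and at worst the $\mathcal T$-kernel character for $\mathcal O$, which is exactly the twist by $\beta$; the case $\varphi=\Theta$ dies at once because conjugation preserves the trace while $\Trace(\sigma)=\tau^{-1}\ne\mp\tau=\Trace(\pm\Theta(\sigma))$; and the cases $\id$, $\beta$, $\rho(\delta)$, $\rho(j)$ all reduce to the centraliser identity $C_{\HH^\times}(K)=\R^\times$, which simultaneously excludes $\id$ and $\beta$ and explains \emph{why} the admissible $\theta$ in the inner cases form exactly the real lines $\R\delta$ and $\R j$. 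Your route is shorter and more conceptual; the paper's is self-contained quaternion algebra and never needs abelianisations or centralisers. Two points to tighten when writing this up. First, the minus sign in your preservation condition for $g_\eta$ additionally forces $\Norm(\theta)=1$ (this is the origin of the factor $-\Norm(\theta)$ in \eqref{eq:theta}); this never affects your conclusions, since your exclusions hold regardless and all your sufficiency checks use the plus sign, but it should be recorded. Second, for $G(\mathcal T,\mathcal C_2,\rho(\delta))$ the condition is a priori only that $c_\theta$ induces $\rho(\delta)$ on $\mathcal T/\mathcal C_2$; you need one more application of your sign-character observation (trivial on $\mathcal T$) to lift this to $c_\theta=\rho(\delta)$ on $\mathcal T$ itself before the centraliser identity can be invoked to give $\theta\in\R\delta$, and then $\theta^2=-1$ gives $\theta=\pm\delta$, both signs yielding the single system of case (i).
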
 

\begin{proof}
Let $u = (1,\theta)$ and $v = (\theta,1)$ and use the fact that 
$i$, $j$ and $\varpi$ belong to $\mathcal T$, $\mathcal O$ and~$\mathcal I$.
If $ug_\xi$ is a multiple of $u$, then $\theta^{-1}\xi\theta = \varphi(\xi)$ 
and hence $\theta\xi\theta = -\Norm(\theta)\varphi(\xi)$. If $ug_\xi$ is a 
multiple of $v$, then $\theta\varphi(\xi)\theta = \xi$, hence 
$\Norm(\theta)^2 = 1$ and so $\theta^2 = -1$. Thus
\begin{equation}\label{eq:theta}
  \theta\xi\theta = -\Norm(\theta)\varphi(\xi)\quad\text{or}\quad
  \theta\xi\theta = \varphi(\xi).
\end{equation}

\noindent
\textbf{Suppose that} $\varphi = \rho(\delta)$. 
Since $\delta i\delta^{-1} = -j$, \eqref{eq:theta} shows that
$\theta i\theta$ is a multiple of $j$ and $\theta j\theta$ is a multiple 
of $i$. On writing $\theta = ai+bj+ck$ for some $a,b,c\in\R$ it follows
from \eqref{eq:i} and \eqref{eq:j} that $c = 0$ and $a^2 = b^2$, 
whence $\theta = ai + bj$ and $\Norm(\theta) = 2a^2$.

A simple calculation shows that $\delta\varpi\delta^{-1} = \varpi^{-1}$ 
and $\theta\varpi\theta = a^2(1 - (b/a)i - (b/a)j + k)$. If 
$\theta\varpi\theta = \delta\varpi\delta^{-1}$ we obtain a contradiction. 
Therefore $\theta\varpi\theta = -\Norm(\theta)\delta\varpi\delta^{-1}$, 
which implies $b = -a$.  This proves that $\theta = r\delta$ for
some $r\in\R$. If $H = \id$, then direct calculation with the
matrices $s$ and $g_\xi$ of \eqref{eq:mats} shows that 
$\soi{(1,r\delta),(r\delta,1)}$ is a system of imprimitivity of
$G(\mathcal T,\id,\rho(\delta))$ and $G(\mathcal O,\id,\rho(\delta))$
for all $r$. However, if $H = \mathcal C_2$, the action of the
generator $f$ of $H$ on $u$ shows that $\theta^2 = -1$ and in this 
case the only possibility is $\soi{(1,\delta),(\delta,1)}$.

\smallskip
\noindent
\textbf{Suppose that} $\varphi$ is $\id$ or $\beta$. Then $K$ is
$\mathcal O$ or $\mathcal I$ and $\mathcal T\subset K$. 
In this case, for $\xi\in\mathcal T$, \eqref{eq:theta} implies 
$\theta\xi\theta$ is $-\Norm(\theta)\xi$ or $\xi$. Writing 
$\theta = ai+bj+ck$ as above, then applying \eqref{eq:i} and 
\eqref{eq:j} to $i$ and $j$ we find that $\theta$ is $ai$, $bj$ or 
$ck$. But then $\theta\varpi\theta$ is not a multiple of $\varpi$,
which is a contradiction.

\smallskip
\noindent
\textbf{Suppose that} $\varphi = \rho(j)$. Again using \eqref{eq:theta}
and applying \eqref{eq:i} and \eqref{eq:j} to $i$ and $j$ we find that 
$\theta$ is $ai$, $bj$ or $ck$.  An easy calculation shows that when 
$\theta$ is $ai$ or $ck$, $\theta\varpi\theta$ is not a multiple of 
$\rho(j)(\varpi)$.  This leaves $bj$ as the only possibility. Conversely,
$\soi{(1,bj),(bj,1)}$ is a system imprimitivity of 
$G(\mathcal I,\id,\rho(j))$ for all $b\in\R$.

\smallskip
\noindent
\textbf{Suppose that} $\varphi = \Theta$. In this case,
$\Theta(i) = -i$ and $\Theta(j) = -k$.
From \eqref{eq:i} and \eqref{eq:j} we deduce that $\theta = bj+ck$
and $b^2 = c^2$. We have $\theta\varpi\theta = b^2(1+i-(b/c)j-(b/c)k)$
and $\Theta(\varpi) = \varpi^{-1}$. From this we deduce that $b = -c$.
But then, for the generator $\sigma$ of $\mathcal I$, we obtain the
contradiction that $(j-k)\sigma(j-k) = -\tau^{-1} + i +\tau k$ is not a
multiple of~$\Theta(\sigma)$.
\end{proof}

\begin{table}[ht]
\caption{Systems of imprimitivity $\soi{u,v}$ for $G(K,H,\varphi)$ 
where $K$ is $\mathcal T$, $\mathcal O$ or $\mathcal I$}
\label{tbl:4}
\smallskip
\centering
\begin{tabular}{l|c|l}
&\small Representatives for $u\ne (1,0)$\\[2pt]
\toprule
$G(\mathcal T,\mathcal C_2,\rho(\delta))$&$(1,\delta)$&
  ${}\simeq G(\mathcal O,\id,\beta)\simeq\mathcal C_4\boxdot_2\mathcal O$\\
$G(\mathcal T,\id,\rho(\delta))$&$(1,r\delta)$&
  ${}\simeq\ST(12)\simeq\GL(2,3)$\\
$G(\mathcal O,\mathcal C_2,\id)$&$(1,1)$&
  ${}\simeq\mathcal C_4\boxdot\mathcal O$\\
$G(\mathcal O,\id,\beta)$&$(1,1)$&
  ${}\simeq G(\mathcal T,\mathcal C_2,\rho(\delta))\simeq
  \mathcal C_4\boxdot_2\mathcal O$\\
$G(\mathcal O,\id,\rho(\delta))$&$(1,r\delta)$&${}\simeq\ST(13)$\\
$G(\mathcal I,\mathcal C_2,\id)$&$(1,1)$&
  ${}\simeq\mathcal C_4\boxdot\mathcal I$\\
$G(\mathcal I,\id,\rho(j))$&$(1,rj)$&${}\simeq\ST(22)$\\
\bottomrule
\end{tabular}

\smallskip
\smallskip
$\delta = \frac{1}{\sqrt2}(i-j)$,\enspace $\beta\in\Aut(\mathcal O)$ is central,
\enspace $r\in \R$
\end{table}

\subsection{The extended binary polyhedral groups.}
\begin{thm}\label{thm:extbin}
Suppose that $G = \mathcal C_d\boxdot_f K$ is a quaternionic 
reflection group where $K$ is $\mathcal T$, $\mathcal O$ or $\mathcal I$ 
and $H = \mathcal C_d\circ_f K$ is primitive. Then $\Sigma$ is a 
system of imprimitivity for $G$ if and only if $\Sigma$ is 
$\soi{(1,j),(j,1)}$ or $\soi{(1,k),(k,1)}$ and $G$ is
$\mathcal C_4\boxdot\mathcal O$, $\mathcal C_4\boxdot_2\mathcal O$
or $\mathcal C_4\boxdot\mathcal I$.
\end{thm}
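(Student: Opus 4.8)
The plan is to test a prospective system of imprimitivity against the three kinds of generator of $G$: the scalar $z=\theta(\zeta_d,1)=\zeta_d\id$, the reflection $s=\begin{pmatrix}0&j\\-j&0\end{pmatrix}$, and the matrices $R(\xi)=\theta(1,\xi)$ for $\xi\in K$, which in the basis $\{1,j\}$ of $\HH$ are given by $h\mapsto h\overline\xi$; explicitly $R(i)=\begin{pmatrix}-i&0\\0&i\end{pmatrix}$ and $R(j)=\begin{pmatrix}0&1\\-1&0\end{pmatrix}$. The decisive structural input, already established in the proof of Theorem~\ref{thm:extST}, is that both $z$ and $s$ commute with every $R(\xi)$.

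First I would record two reflections. A direct computation gives $s^2=\id$ with $+1$-eigenspace $\langle(1,j)\rangle$ and $-1$-eigenspace $\langle(j,1)\rangle$, so the candidate system $\soi{(1,j),(j,1)}$ is exactly the eigenspace pair of $s$; and when $d=4$ we have $\zeta_4=i$, so $zs=\begin{pmatrix}0&k\\-k&0\end{pmatrix}$ is a second reflection whose eigenspace pair is $\soi{(1,k),(k,1)}$. This settles the \emph{if} direction at once: since each $R(\xi)$ commutes with $s$ and with $zs$ it stabilises their eigenspaces, while $z$ and $s$ visibly permute each pair; for $f=2$ the extra coset generator $zg_2=\zeta_4R(\eta_2)$ (with $\eta_2\in\mathcal O\setminus\mathcal T$) is a product of maps that stabilise both systems. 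Hence for the three groups with $d=4$ every generator preserves $\soi{(1,j),(j,1)}$ and $\soi{(1,k),(k,1)}$.

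For the \emph{only if} direction let $\Sigma=\{V_1,V_2\}$ be a system of imprimitivity. As $G$ is irreducible, $V_1$ and $V_2$ are orthogonal lines; since $R(\varpi)$ (note $\varpi\in\mathcal T\subseteq G$ has nonzero off-diagonal part) carries $\langle e_1\rangle$ off the coordinate axes, $\Sigma\ne\soi{e_1,e_2}$ and we may scale so that $V_1=\langle(1,\theta)\rangle$ with $\theta\ne0$ and $V_2=\langle(-\overline\theta,1)\rangle$. The action on $\{V_1,V_2\}$ is a homomorphism $\pi\colon G\to\Sym\{V_1,V_2\}$, and the composite $\pi\circ R\colon\mathcal T\to\Sym\{V_1,V_2\}$ factors through $\mathcal T^{\mathrm{ab}}\simeq\mathcal C_3$, hence is trivial; thus $R(i)$ and $R(j)$ must \emph{fix} $V_1$. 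Fixing by $R(i)$ forces $\theta$ to anticommute with $i$, so $\theta$ has no real and no $i$-part, i.e. $\theta=bj+ck$; fixing by $R(j)$ forces $\theta^2=-1$, i.e. $b^2+c^2=1$. Requiring $s$ to preserve $\Sigma$ then pins $\theta$ to $\{\pm j,\pm k\}$, so $\Sigma$ is $\soi{(1,j),(j,1)}$ or $\soi{(1,k),(k,1)}$. With $\theta^2=-1$ every $R(\xi)$ now fixes both lines, so the only remaining freedom is the scalar $\zeta_d$ carried by $z$ (when $f=1$) or by $zg_2$ (when $f=2$); preservation of $\Sigma$ forces $\zeta_d\in\R\cup i\R$ and hence $d=4$. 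Intersecting with the list of primitive $\mathcal C_d\circ_f K$ recalled in \S\ref{sec:extST}---which has no $d=4$ for $K=\mathcal T$---leaves precisely $\mathcal C_4\boxdot\mathcal O$, $\mathcal C_4\boxdot_2\mathcal O$ and $\mathcal C_4\boxdot\mathcal I$.

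The step I expect to be most delicate is the $f=2$ bookkeeping: there $R(\mathcal O)\not\subseteq G$, and only $R(\mathcal T)$ together with the twisted coset $\zeta_dR(\mathcal O\setminus\mathcal T)$ lies in $G$, so both the homomorphism argument and the scalar argument must be routed through $R(\mathcal T)$ and $zg_2$ rather than through $R(K)$ and $z$. The other point to handle with care is the exclusion of the competing normalisations $\theta\in\langle1,i\rangle$ (in particular $\Sigma=\soi{(1,1),(1,-1)}$ and $\soi{(1,i),(i,1)}$): these would require $R(i)$ to interchange $V_1$ and $V_2$, which is exactly what the triviality of $\pi\circ R$ on $R(\mathcal T)$ rules out, since $i\in\mathcal T$.
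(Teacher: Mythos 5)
Your argument is correct in substance and reaches the theorem's conclusion, but it differs from the paper's proof in two genuine ways, and it contains one faulty justification at the end. The paper invokes primitivity of $H$ at the outset to conclude $\theta\notin\C$, and then runs a ``multiple of $u$ versus multiple of $v$'' case analysis against $h=\begin{pmatrix}i&0\\0&-i\end{pmatrix}$, $s$ and $g=\begin{pmatrix}\zeta_d&0\\0&\zeta_d\end{pmatrix}$ to force $\theta\in\{\pm j,\pm k\}$ and $\zeta_d=\pm i$. Your replacement of the first steps by the observation that $\pi\circ R\colon\mathcal T\to\Sym\{V_1,V_2\}$ is trivial (because $\mathcal T^{\mathrm{ab}}\simeq\mathcal C_3$ admits no quotient of order $2$) is a genuinely different and cleaner device: it forces $R(i)$ and $R(j)$ to \emph{fix} the lines, eliminating the swap cases and producing $\theta=bj+ck$, $\theta^2=-1$ without appealing to primitivity of $H$ at that point. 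Likewise your converse, via the eigenspaces of $s$ and $zs$ and the fact that every $R(\xi)$ commutes with both, is a tidy substitute for the paper's ``can be checked directly''. One caution: your interim claim that once $\theta^2=-1$ ``every $R(\xi)$ fixes both lines'' cannot be obtained from the abelianization argument when $\xi\in\mathcal O\setminus\mathcal T$ (since $\mathcal O^{\mathrm{ab}}\simeq\mathcal C_2$); it needs exactly the commutation-with-$s$-and-$zs$ eigenspace argument, which you do have on the table, so this is a matter of routing rather than a gap.

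The faulty step is the final exclusion of $K=\mathcal T$. You cite ``the list of primitive $\mathcal C_d\circ_f K$'', but Cohen's result recalled in \S\ref{sec:extST} identifies which $\mathcal C_d\circ_f K$ are the nineteen primitive complex \emph{reflection} groups; it is not a classification of primitivity. In fact $\mathcal C_4\circ\mathcal T$ \emph{is} primitive as a linear group --- it contains the primitive irreducible group $R(\mathcal T)$ --- so the hypothesis that $H$ is primitive does not rule this case out. What rules it out is the other hypothesis, that $G$ is a quaternionic reflection group, via Theorem \ref{thm:extST}\,(i): $\mathcal C_d\boxdot\mathcal T$ is generated by reflections only when $6\mid d$, which fails for $d=4$. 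With that one-line repair, the surviving groups with $d=4$ are exactly $\mathcal C_4\boxdot\mathcal O$, $\mathcal C_4\boxdot_2\mathcal O$ and $\mathcal C_4\boxdot\mathcal I$, and your proof is complete.
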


\begin{proof}
In all cases $G$ contains the matrices
\[
  g = \begin{pmatrix}\zeta_d&0\\0&\zeta_d\end{pmatrix},\enspace
  h = \begin{pmatrix}i&0\\0&-i\end{pmatrix}\in K^\sharp\enspace
  \text{and}\enspace
  s = \begin{pmatrix}0&j\\-j&0\end{pmatrix}.
\]

We may suppose that $\Sigma = \soi{u,v}$ where $u = (1,\theta)$
and where $\theta\notin\C$ because we assume that $H$ is primitive.
We may scale $v$ so that $v = (-\overline\theta,1)$.

Write $\theta = a+bi+cj+dk$. If $uh$ is a multiple of $v$, then
$\theta i\overline\theta = i$ and therefore $n\theta i = i\theta$, 
where $n = \Norm(\theta) > 0$. But then $a(n-1) = b(n-1) = c(n+1) = d(n+1) = 0$,
and so $c = d = 0$. Therefore $\theta\in\C$, contrary to assumption.
Consequently $uh$ is a multiple of $u$, whence $i\theta = -\theta i$
and $a = b = 0$.

If $us$ is a multiple of $u$, then $\theta j\theta = -j$, whereas,
if it is a multiple of $v$, then $j\overline\theta = \theta j$ and
hence $\theta j\theta = j\Norm(\theta)$. Since
$\theta j\theta = (d^2 - c^2)j -2cd k$ we find that $cd = 0$. That is,
$\theta$ is $cj$ or $dk$.

If $ug$ where a multiple of $u$, then $\zeta_d\theta = \theta\zeta_d$
and we would have $d = 2$, which contradicts Theorem \ref{thm:extST}.  
Thus $ug$ is a multiple of $v$ and so
$-\theta\zeta_d\overline\theta = \zeta_d$. Therefore $\Norm(\theta)^2 = 1$, 
$\zeta_d = \pm i$ and $d = 4$, whence $K\ne\mathcal T$. Consequently 
$\Sigma$ is $\soi{(1,j),(j,1)}$ or $\soi{(1,k),(k,1)}$. Conversely, it
can be checked directly that these values of $\Sigma$ are systems of
imprimitivity for $\mathcal C_4\boxdot\mathcal O$, 
$\mathcal C_4\boxdot_2\mathcal O$ and $\mathcal C_4\boxdot\mathcal I$.
\end{proof}

\section{Conjugacy}\label{sec:conj}
For $r\in\R$ and $\theta\in\HH$ such that $\theta^2 = -1$, define
\begin{equation}\label{eq:RST}
  R_{r,\theta} = \frac{1}{\sqrt{1+r^2}}\begin{pmatrix}
    1&r\theta\\-r\theta&-1\end{pmatrix}\quad
  \text{and}\quad
  T = \tfrac{1}{\sqrt2}\begin{pmatrix}1&\phantom{-}1\\1&-1\end{pmatrix}.
\end{equation}

These are reflections of order 2 whose rows are systems of imprimitivity 
for the quaternion reflection groups of Theorems \ref{thm:bin_dihedral},
\ref{thm:binpol_11}, \ref{thm:binpol_theta} and \ref{thm:extbin}. In some 
cases, as illustrated by the following theorem, a matrix of \eqref{eq:RST}
corresponding to
a system of imprimitivity of $G = G(K,H,\varphi)$ conjugates $G$ to
$G(K',H',\varphi')$ with $K' \ne K$. In other cases the matrices 
corresponding to the systems of imprimitivity belong to the normaliser
of $G$.

\subsection{Conjugacy in the binary tetra-, octa- and icosahedral families.}
\begin{thm}\label{thm:TO}
The groups $G(\mathcal T,\mathcal C_2,\rho(\delta))$ and
$G(\mathcal O,\id,\beta)$ are conjugate in $\U_2(\HH)$.
\end{thm}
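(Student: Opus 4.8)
The plan is to use the second system of imprimitivity of $G_1 = G(\mathcal T,\mathcal C_2,\rho(\delta))$ found in Theorem~\ref{thm:binpol_theta}(i), namely $\soi{(1,\delta),(\delta,1)}$, and to conjugate $G_1$ into standard form by the reflection $R = R_{1,\delta}$ of \eqref{eq:RST}, whose two rows span precisely this system. Since $\delta^2 = -1$, $R$ is a reflection of order two, so $R^{-1} = R$, and a direct calculation gives $(1,\delta)R = \sqrt2\,e_1$ and $(\delta,1)R = -\sqrt2\,e_2$. Hence the conjugate $G' = R\,G_1\,R$ carries $\soi{(1,\delta),(\delta,1)}$ to $\soi{e_1,e_2}$, so $G'$ is imprimitive with respect to the standard basis and is therefore of the form $G(K',H',\varphi')$ by Cohen's theorem.

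First I would conjugate the generators \eqref{eq:mats} of $G_1$ by $R$. Each diagonal generator $g_\xi = \begin{pmatrix}\xi&0\\0&\delta\xi\delta^{-1}\end{pmatrix}$ in fact \emph{commutes} with $R$ (a one-line check using $\delta^2 = -1$), so it is fixed; only the interchanging generators move, and one computes $RsR = -s$ and $RfR = \begin{pmatrix}0&-\delta\\\delta&0\end{pmatrix}$. Their product $(-s)(RfR) = \begin{pmatrix}-\delta&0\\0&\delta\end{pmatrix}$ is a new diagonal element of $G'$ whose first entry $-\delta$ lies in $\mathcal O\setminus\mathcal T$; together with the $g_\xi$ this forces the diagonal first entries to generate $\mathcal O$, so $K' = \mathcal O$. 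Since $|G'| = |G_1| = 96 = 2\,|K'|\,|H'|$, the order formula gives $|H'| = 1$, i.e.\ $H' = \id$. Reading the correspondence $\xi\mapsto\eta$ off the diagonal elements then yields $\varphi'(\xi) = \rho(\delta)(\xi)$ for $\xi\in\mathcal T$ and $\varphi'(-\delta) = \delta$, which identifies $\varphi' = \rho(\delta)\beta$. (One point to record is that $g_{-1} = -\id\in G'$, so $s = (-s)(-\id)\in G'$ supplies the canonical interchanging reflection $\begin{pmatrix}0&1\\1&0\end{pmatrix}$, legitimising this standard-form reading.)

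It remains to pass from $G(\mathcal O,\id,\rho(\delta)\beta)$ to $G(\mathcal O,\id,\beta)$, and here I would invoke Lemma~\ref{lemma:conj}(i). As $\beta$ is central in $\Aut(\mathcal O)$, we have $\varphi'(\delta) = \rho(\delta)\beta(\delta) = \rho(\delta)(-\delta) = -\delta = \delta^{-1}$, so $\delta\in L_{\varphi'}$; conjugating by $h = \begin{pmatrix}1&0\\0&\delta\end{pmatrix}$ gives $hG'h^{-1} = G(\mathcal O,\id,\rho(\delta)\varphi') = G(\mathcal O,\id,\rho(\delta)^2\beta) = G(\mathcal O,\id,\beta)$, since $\rho(\delta)^2 = \rho(\delta^2) = \rho(-1) = \id$. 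Composing the two conjugations shows $G_1$ is conjugate to $G(\mathcal O,\id,\beta)$ in $\U_2(\HH)$. The main obstacle is entirely computational: keeping the noncommutative products straight when conjugating by $R$ (the mixed terms $\delta\xi\delta$ for $\xi = i,j,\varpi$), and recognising that conjugation produces $\rho(\delta)\beta$ rather than $\beta$ outright — the leftover $\rho(\delta)$ being exactly what the final application of Lemma~\ref{lemma:conj}(i) absorbs.
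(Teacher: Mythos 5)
Your proof is correct and takes essentially the same route as the paper: conjugate by $R_{1,\delta}$ to land on $G(\mathcal O,\id,\rho(\delta)\beta)$ (the paper writes $\rho(\delta)^{-1}\beta$, the same automorphism since $\rho(\delta)^2 = \id$), then apply Lemma~\ref{lemma:conj}\,(i) with $h = \begin{pmatrix}1&0\\0&\delta\end{pmatrix}$ to absorb the factor $\rho(\delta)$. The only difference is that you spell out the intermediate verifications (commutation of the $g_\xi$ with $R$, the order count forcing $H'=\id$, and the presence of $s$ in the conjugated group) that the paper leaves implicit.
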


\begin{proof}
The group $G(\mathcal T,\mathcal C_2,\rho(\delta))$ is generated by
\[
  \begin{pmatrix}0&1\\1&0\end{pmatrix},\quad
  \begin{pmatrix}-1&0\\0&1\end{pmatrix},\quad
  \begin{pmatrix}i&0\\0&-j\end{pmatrix}\enspace\text{and}\enspace
  \begin{pmatrix}\varpi&0\\0&\varpi^{-1}\end{pmatrix}.
\]
The reflection $R_{1,\delta}$ conjugates these matrices to generators of
$G(\mathcal O,\id,\rho(\delta)^{-1}\beta)$.
Then $\delta\in L_\varphi$ and so $G(\mathcal O,\id,\beta)$ is the result of 
a further conjugation by $\begin{pmatrix}1&0\\0&\delta\end{pmatrix}$.
\end{proof}

\begin{rem}
This is a counterexample to Lemma (2.3) of \cite{cohen:1980}.
\end{rem}

\begin{thm}\label{thm:primST}\relax\leavevmode
\begin{enumerate}[\enspace(i)]
\item 
The groups $G(\mathcal T,\id,\rho(\delta))$, $G(\mathcal O,\id,\rho(\delta))$
and $G(\mathcal I,\id,\rho(j))$ are conjugate in $\U_2(\HH)$ to the
Shephard and Todd groups $\ST(12)$, $\ST(13)$ and $\ST(22)$ of complex type.
\item
For all $r\in\R$ the reflections $R_{r,\delta}$ normalise 
$G(\mathcal T,\id,\rho(\delta))$ and $G(\mathcal O,\id,\rho(\delta))$ and
the reflections $R_{r,j}$ normalise $G(\mathcal I,\id,\rho(j))$.
\end{enumerate}
\end{thm}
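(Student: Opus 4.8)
The plan is to reduce both parts to explicit computations with the generators
\[
  s = \begin{pmatrix}0&1\\1&0\end{pmatrix},\qquad
  g_\xi = \begin{pmatrix}\xi&0\\0&\varphi(\xi)\end{pmatrix}\quad(\xi\in K)
\]
of \eqref{eq:mats} (here $H=\id$, so $f$ is omitted), where $\varphi = \rho(\delta)$ for $K=\mathcal T,\mathcal O$ and $\varphi=\rho(j)$ for $K=\mathcal I$. I write $\theta$ for $\delta$ in the first two cases and for $j$ in the third, so that $\theta^2=-1$, $\theta^{-1}=-\theta$, and $\varphi(\xi)=\theta\xi\theta^{-1}$. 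The conceptual heart of part (i) is that each group commutes with a quaternionic complex structure, which forces it to be of complex type.

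For part (i) I would first exhibit the matrix
\[
  J = \begin{pmatrix}0&\theta\\\theta&0\end{pmatrix}\in\U_2(\HH)
\]
and check the three facts $J\overline J^{\Tr}=I$, $J^2=\theta^2 I=-I$, and $JA=AJ$ for every generator $A$ of $G$. Commutation with $s$ is immediate, and commutation with $g_\xi$ reduces to the two identities $\theta\varphi(\xi)=\xi\theta$ and $\varphi(\xi)\theta=\theta\xi$, both of which follow at once from $\varphi(\xi)=\theta\xi\theta^{-1}$ and $\theta^{-1}=-\theta$. Thus $J$ is a $G$-invariant complex structure. Since $J$ is unitary with $J^2=-I$, a direct diagonalisation produces $g\in\U_2(\HH)$ with $g^{-1}Jg=iI$; then $g^{-1}Gg$ commutes with $iI$ and hence lies in $\U_2(\C)^\sharp$, so $G$ is conjugate to a group of complex type. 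To name it I would count reflections: by Lemma \ref{lemma:gen} the reflections of $G(K,\id,\varphi)$ are the $|L_\varphi|$ anti-diagonal involutions $\begin{pmatrix}0&\xi\\\xi^{-1}&0\end{pmatrix}$ with $\xi\in L_\varphi$, and a short count gives $|L_{\rho(\delta)}|=12$ for $\mathcal T$, $18$ for $\mathcal O$, and $|L_{\rho(j)}|=30$ for $\mathcal I$. The resulting complex reflection group therefore has order $48$, $96$, $240$ and exactly $12$, $18$, $30$ reflections, all of order $2$; by the Shephard--Todd classification these data single out $G_{12}$, $G_{13}$, $G_{22}$, so $G$ is conjugate to $\ST(12)$, $\ST(13)$, $\ST(22)$ respectively.

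For part (ii) the computation is self-contained. Since $R_{r,\theta}^2=I$, it suffices to verify $R_{r,\theta}g_\xi R_{r,\theta}=g_\xi$ and $R_{r,\theta}sR_{r,\theta}=-s$. Both follow from the same identities $\theta\varphi(\xi)=\xi\theta$ and $\varphi(\xi)\theta=\theta\xi$ after expanding the $2\times2$ products and using $\theta^2=-1$: the off-diagonal terms cancel and the diagonal terms reassemble to $g_\xi$, while $s$ is sent to $-s$. Because $-I=g_{-1}\in G$ and $s\in G$, we have $-s\in G$, so $R_{r,\theta}$ sends every generator of $G$ into $G$ and hence normalises $G$. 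This applies verbatim to $G(\mathcal T,\id,\rho(\delta))$ and $G(\mathcal O,\id,\rho(\delta))$ with $\theta=\delta$, and to $G(\mathcal I,\id,\rho(j))$ with $\theta=j$.

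The main obstacle is the identification step in (i): order together with the property that every reflection has order $2$ does not determine the group, since for instance the dihedral group $\ST(24,24,2)$ of order $48$ also has all reflections of order $2$. One genuinely needs the reflection count $|L_\varphi|$ to separate $G_{12}$ (with $12$ reflections) from such a dihedral group (with $24$). A secondary technical point is the passage from $J$ to the conjugating element $g$ with $g^{-1}Jg=iI$; I would handle this by solving $Jc=c\,i$ for the columns $c$ of $g$, which is a pair of linear equations over $\HH$ and is routine. Everything else is elementary quaternion arithmetic, and part (ii) is essentially immediate once the two commutation identities from (i) are in hand.
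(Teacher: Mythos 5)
Your proposal is correct. Part (ii) is essentially the paper's own proof: the paper verifies exactly the same two facts, namely that $R_{r,\theta}$ commutes with each $\begin{pmatrix}\xi&0\\0&\theta\xi\theta^{-1}\end{pmatrix}$ and conjugates $s$ to $-s$; your remark that $-s\in G$ because $-\id\in G$ makes explicit a detail the paper leaves tacit. Part (i) is where you genuinely diverge. The paper proves complex type by explicit computation: it exhibits a concrete unitary conjugator $M$ (namely $\frac12\begin{pmatrix}-1+k&-\sqrt2\,k\\ 1+k&\sqrt2\end{pmatrix}$ for the $\mathcal T$ and $\mathcal O$ cases, and $DR_{1,k}$ with $D$ the diagonal matrix with entries $1,-1$ for the $\mathcal I$ case), writes down the resulting complex generator matrices, and then identifies the groups from reflection counts and Table D.1 of Lehrer--Taylor. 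You instead argue conceptually: $G$ commutes with the unitary complex structure $J=\begin{pmatrix}0&\theta\\\theta&0\end{pmatrix}$ with $J^2=-I$; conjugating $J$ to $iI$ forces the conjugated group to have all entries in the centraliser of $i$, i.e.\ in $\C$. This explains \emph{why} the groups are of complex type and avoids having to guess $M$; note also that no general quaternionic spectral theory is needed, since $(1,1)$ and $(1,-1)$ are left eigenvectors of $J$ with eigenvalues $\theta$ and $-\theta$, so the paper's matrix $T$ of \eqref{eq:RST} followed by a diagonal unitary conjugation already achieves $J\mapsto iI$. The identification step is common to both proofs (order, number of reflections, all reflections of order~2), and your explicit exclusion of imprimitive candidates such as $\ST(24,24,2)$ is a point the paper delegates to the cited table; for completeness you should also record that the conjugated group is irreducible over $\C$ (immediate, since a reducible rank-two unitary reflection group is abelian and $G$ is not). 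What the paper's computation buys that yours does not is the explicit conjugator $M$, which is reused in Remark \ref{rem:inf2} to transport systems of imprimitivity $\soi{u,v}$ to $\soi{uM^{-1},vM^{-1}}$; your conjugating element exists but is not written down, though the eigenvector construction just described produces one. Finally, the equation $Jc=ci$ for the columns of $g$ should be recast in the paper's row-vector, left-scalar conventions --- a purely cosmetic repair.
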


\begin{proof}
(i)\enspace From \eqref{eq:mats}, $G = G(\mathcal O,\id,\rho(\delta))$ 
is generated by
\[
  s = \begin{pmatrix}0&1\\1&0\end{pmatrix},\quad
  g_\varpi = \begin{pmatrix}\varpi&0\\0&\delta\varpi\delta^{-1}\end{pmatrix}
  \enspace\text{and}\enspace
  g_\gamma = \begin{pmatrix}\gamma&0\\0&\delta\gamma\delta^{-1}\end{pmatrix},
\]
where $\varpi = \tfrac12(-1 + i + j + k)$ and $\gamma = \frac1{\sqrt2}(1 + i)$.
The subgroup $G'$ generated by $s$, $g_\varpi$ and $g_\gamma^2$ is
$G(\mathcal T,\id,\rho(\delta))$.  The matrix 
$M = \frac12\begin{pmatrix}-1+k&-\sqrt2 k\\
  \phantom{-}1+k&\phantom{-}\sqrt2\end{pmatrix}$
is unitary and the conjugates $MsM^{-1}$, $Mg_\varpi M^{-1}$ and
$Mg_\gamma M^{-1}$ are of complex type:
\[
  -\tfrac{1}{\sqrt2}\begin{pmatrix}1&1\\1&-1\end{pmatrix},\quad
  \tfrac12\begin{pmatrix}-1+i&-1-i\\\phantom{-}1-i&-1-i\end{pmatrix}
  \enspace\text{and}\enspace
  \tfrac{1}{\sqrt2}\begin{pmatrix}1&-i\\-i&1\end{pmatrix}.
\]
All the reflections in these groups have order 2: there are 12 in
$G'$ and 18 in $G$ (see Lemma \ref{lemma:gen}).  It 
follows from Table D.1 of \cite{lehrer-taylor:2009} that 
$MG'M^{-1}$ is $\ST(12)$ and $MGM^{-1}$ is $\ST(13)$.

The generators of $G'' = G(\mathcal I,\id,\rho(j))$ are
\[
  s = \begin{pmatrix}0&1\\1&0\end{pmatrix},\quad
  g_i = \begin{pmatrix}i&0\\0&-i\end{pmatrix}
  \enspace\text{and}\enspace
  g_\sigma = \begin{pmatrix}\sigma&0\\0&j\sigma j^{-1}\end{pmatrix},
\]
where $\sigma = \tfrac12(\tau^{-1} + i + \tau j)$. The conjugates
of these matrices by $S = R_{1,k}$ are
\[
  \begin{pmatrix}0&-1\\-1&0\end{pmatrix},\quad
  \begin{pmatrix}i&0\\0&-i\end{pmatrix}
  \enspace\text{and}\enspace
  \tfrac12\begin{pmatrix}\tau^{-1}+i&\tau i\\\tau i&\tau^{-1} - i\end{pmatrix}.
\]
Conjugating again by $D = \begin{pmatrix}1&0\\0&-1\end{pmatrix}$ and noting
that there are 30 reflections in $G''$ makes it clear from 
\cite{lehrer-taylor:2009} that $MG''M^{-1}$ is $\ST(22)$, where $M = DS$.

\medskip
\noindent
(ii)\enspace
If $\theta^2 = -1$ and $r\in\R$, the reflection $R_{r,\theta}$ 
conjugates $s$ to $-s$ and commutes with 
$\begin{pmatrix}\xi&0\\0&\theta\xi\theta^{-1}\end{pmatrix}$ for
all $\xi\in\sS^3$. Taking $\theta = \delta$ and $\theta = j$ completes
the proof.
\end{proof}

\begin{rem}\label{rem:inf2}
If $\soi{u,v}$ is a system of imprimitivity for $G$, $G'$ or 
$G''$, then $\soi{uM^{-1},vM^{-1}}$ is a system of imprimitivity for
their conjugates.  Thus the quaternionic reflection groups $\ST(12)$,
$\ST(13)$ and $\ST(22)$ of complex type have infinitely many systems of 
imprimitivity even though, as complex reflection groups, they are primitive.
(These are the only primitive complex reflection groups of rank two all of
whose reflections have order 2.)
\end{rem}

\subsection{Conjugacy of groups of the binary dihedral family}\label{sec:bconj}
\begin{thm}\label{thm:ST}
For all $m \ge 1$, 
$G(\mathcal D_m,\id,\psi_1)$ is conjugate in $\U_2(\HH)$ to
$G(\mathcal C_{2m},\mathcal C_2,\varphi)\simeq\ST(2m,m,2)$, where 
$\varphi$ is inversion. For all $r\in\R$, the reflections $R_{r,i}$ 
normalise $G(\mathcal D_m,\id,\psi_1)$.
\end{thm}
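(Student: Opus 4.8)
The plan is to prove the two assertions separately: the normaliser statement is essentially immediate from the earlier results once one identifies $\psi_1$ correctly, while the conjugacy requires producing an explicit unitary matrix.

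First I would record the generators. Taking $x=\zeta_{2m}$, $y=j$, $n=2m$, $r=1$ and $H=\id$ in \eqref{eq:std-gens}, the group $G=G(\mathcal D_m,\id,\psi_1)$ is generated by
\[
  s = \begin{pmatrix}0&1\\1&0\end{pmatrix},\quad
  g = \begin{pmatrix}\zeta_{2m}&0\\0&\zeta_{2m}\end{pmatrix},\quad
  h = \begin{pmatrix}j&0\\0&-j\end{pmatrix},
\]
the second standard generator being trivial since $\zeta_{2m}^{2m}=1$. The key observation for the normaliser claim is that $\psi_1=\rho(i)$: the element $i$ normalises $\mathcal D_m$ with $i\zeta_{2m}i^{-1}=\zeta_{2m}$ and $iji^{-1}=-j=j^{-1}$, which is exactly the action of $\psi_1$ on the generators $\zeta_{2m}$ and $j$. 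Hence every diagonal element of $G$ has the shape $\begin{pmatrix}\xi&0\\0&i\xi i^{-1}\end{pmatrix}$. Theorem \ref{thm:primST}(ii) with $\theta=i$ then says that $R_{r,i}$ commutes with all such diagonal elements and sends $s$ to $-s$. Since $-I=g^m\in G$ we have $-s=(-I)s\in G$, so $R_{r,i}$ carries every generator of $G$ back into $G$; thus $R_{r,i}$ normalises $G$ for all $r\in\R$, settling the second statement.

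For the conjugacy I would exploit the second system of imprimitivity $\soi{(1,1),(1,-1)}$, whose matrix from \eqref{eq:RST} is $T=\tfrac{1}{\sqrt2}\begin{pmatrix}1&1\\1&-1\end{pmatrix}$, followed by a diagonal correction $D=\begin{pmatrix}1&0\\0&j\end{pmatrix}$. Writing $Q=DT=\tfrac{1}{\sqrt2}\begin{pmatrix}1&1\\j&-j\end{pmatrix}$ (which one checks is unitary), a direct computation using $j\zeta_{2m}j^{-1}=\overline{\zeta_{2m}}=\zeta_{2m}^{-1}$ gives
\[
  QsQ^{-1} = \begin{pmatrix}1&0\\0&-1\end{pmatrix},\quad
  QgQ^{-1} = \begin{pmatrix}\zeta_{2m}&0\\0&\zeta_{2m}^{-1}\end{pmatrix},\quad
  QhQ^{-1} = \begin{pmatrix}0&1\\-1&0\end{pmatrix}.
\]
These are all of complex type. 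Since $-I=(QgQ^{-1})^m$ lies in the conjugated group, the products $(QsQ^{-1})(QhQ^{-1})=\begin{pmatrix}0&1\\1&0\end{pmatrix}$ and $(-I)(QsQ^{-1})=\begin{pmatrix}-1&0\\0&1\end{pmatrix}$ show that $QGQ^{-1}$ contains the three generators of $G(\mathcal C_{2m},\mathcal C_2,\varphi)$ with $\varphi$ inversion. Comparing orders ($|G|=8m=|G(\mathcal C_{2m},\mathcal C_2,\varphi)|$) forces equality, and the isomorphism $G(\mathcal C_{2m},\mathcal C_2,\varphi)\simeq\ST(2m,m,2)$ is Theorem \ref{thm:cyclic}.

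The main obstacle is that the matrix $T$ attached to $\soi{(1,1),(1,-1)}$ does \emph{not} by itself exhibit the cyclic labelling: conjugation by $T$ sends $g$ to itself and $h$ to $\begin{pmatrix}0&j\\j&0\end{pmatrix}$, so the image retains a $j$ in the anti-diagonal entry and is not of complex type; read off naively with interchanging reflection $\begin{pmatrix}0&j\\j&0\end{pmatrix}$ it would even return the spurious automorphism $\varphi=\id$. The extra conjugation by $D=\begin{pmatrix}1&0\\0&j\end{pmatrix}$ is what simultaneously straightens the swap and, through $j\zeta_{2m}j^{-1}=\zeta_{2m}^{-1}$, converts the scalar block $\begin{pmatrix}\zeta_{2m}&0\\0&\zeta_{2m}\end{pmatrix}$ into $\begin{pmatrix}\zeta_{2m}&0\\0&\zeta_{2m}^{-1}\end{pmatrix}$ --- which is exactly where the inversion automorphism is produced. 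The only real hazard in the computation is keeping the left-scalar/right-matrix convention and the quaternion non-commutativity straight when forming $QsQ^{-1}$, $QgQ^{-1}$ and $QhQ^{-1}$.
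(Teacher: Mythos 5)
Your proof is correct and follows essentially the same route as the paper: you conjugate the same three generators by the same matrix $\begin{pmatrix}1&0\\0&j\end{pmatrix}T$, obtain the same complex-type matrices generating $G(\mathcal C_{2m},\mathcal C_2,\varphi)$, invoke Theorem \ref{thm:cyclic}, and settle the normaliser claim via $\psi_1=\rho(i)$ together with Theorem \ref{thm:primST}(ii). Your only additions (the order count $8m=8m$ and the explicit remark that $-s=g^ms\in G$) simply spell out steps the paper leaves implicit.
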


\begin{proof}
The matrices 
$\begin{pmatrix}0&1\\1&0\end{pmatrix}$,
$\begin{pmatrix}\zeta_{2m}&0\\0&\zeta_{2m}\end{pmatrix}$
and $\begin{pmatrix}j&0\\0&-j\end{pmatrix}$ generate 
$G(\mathcal D_m,\id,\psi_1)$.
Their conjugates by $\begin{pmatrix}1&0\\0&j\end{pmatrix}T$ where $T$ 
is defined in \eqref{eq:RST} are
\[
  \begin{pmatrix}1&\phantom{-}0\\0&-1\end{pmatrix},\quad
  \begin{pmatrix}\zeta_{2m}&0\\0&\zeta_{2m}^{-1}\end{pmatrix}\enspace
  \text{and}\enspace
  \begin{pmatrix}\phantom{-}0&1\\-1&0\end{pmatrix}.
\]
They generate $G(\mathcal C_{2m},\mathcal C_2,\varphi)$, which by
Theorem \ref{thm:cyclic} is conjugate to $\ST(2m,m,2)$.

Since $\psi_1 = \rho(i)$, the proof of Theorem \ref{thm:primST} (ii) shows
that for $r\in\R$, $R_{r,i}$ normalises $G(\mathcal D_m,\id,\psi_1)$.
\end{proof}

\begin{rem}\label{rem:inf1}
The centraliser of $G = G(\mathcal D_m,\id,\psi_1)$ in the 
algebra of $2\times 2$ quaternionic matrices is the real subalgebra 
$\left\{\begin{pmatrix}p&qi\\qi&p\end{pmatrix}\;\Big|\; p,q\in\R\right\}$.
The rows of these matrices are systems of imprimitivity for $G$.  
Therefore the groups $\ST(2m,m,2)$ of complex type have infinitely many 
systems of imprimitivity.
\end{rem}

\begin{thm}\label{thm:DmC2}
For all odd $m\ge 1$, $G(\mathcal D_m,\mathcal C_2,\psi_1)$ 
is conjugate to $G(\mathcal D_{2m},\id,\psi_{2m-1})$ in~$\U_2(\HH)$.
\end{thm}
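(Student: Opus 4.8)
The plan is to exploit the second system of imprimitivity of the source group. For odd $m$, Theorem~\ref{thm:bin_dihedral}\,(iii) (applied with its parameter equal to $2m$, $\ell = 1$ and $r = 2m-1$, which forces $2m \equiv 2 \pmod 4$) shows that $\soi{(1,j),(j,1)}$ is a system of imprimitivity for $G = G(\mathcal D_{2m},\id,\psi_{2m-1})$. I would first record its standard generators from \eqref{eq:std-gens}: with $x = \zeta_{4m}$, $r = 2m-1$ and $H = \id$, and using $\zeta_{4m}^{2m-1} = -\zeta_{4m}^{-1}$, these are the swap $s = \begin{pmatrix}0&1\\1&0\end{pmatrix}$, the matrix $\begin{pmatrix}\zeta_{4m}&0\\0&-\zeta_{4m}^{-1}\end{pmatrix}$ and $\begin{pmatrix}j&0\\0&-j\end{pmatrix}$. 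One checks directly that each of these interchanges $\(1,j\)$ and $\(j,1\)$, confirming the system.

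The next step is to move $\soi{(1,j),(j,1)}$ onto the standard system by conjugating with the unitary matrix $N = \tfrac1{\sqrt2}\begin{pmatrix}1&j\\j&1\end{pmatrix}$, whose rows span that system. Because $N$ interchanges $\soi{(1,j),(j,1)}$ and $\soi{e_1,e_2}$, the conjugate $N^{-1}GN$ is imprimitive for the standard system and hence has the form $G(K',H',\varphi')$. A short quaternion computation gives $N^{-1}sN = s$, while the two diagonal generators become anti-diagonal; multiplying each by $s$ returns to the diagonal subgroup, which is generated by $\begin{pmatrix}-1&0\\0&1\end{pmatrix}$ and $\begin{pmatrix}\mu&0\\0&\nu\end{pmatrix}$ with $\mu = -j\zeta_{4m}$ and $\nu = \zeta_{4m}j$. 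The identities $\mu^2 = \nu^2 = -1$ and $\nu\mu = \zeta_{2m}$ (equivalently $\mu\,\zeta_{2m}\,\mu^{-1} = \zeta_{2m}^{-1}$) then identify $K' = \(\zeta_{2m},\mu\)$ as a binary dihedral group isomorphic to $\mathcal D_m$, with $H' = \mathcal C_2 = \{\pm 1\}$.

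It remains to put $K'$ into standard position and to normalise $\varphi'$. Since $\mu$ is a pure unit quaternion in the plane spanned by $j$ and $k$, a scalar rotation $\begin{pmatrix}\omega&0\\0&\omega\end{pmatrix}$ about the $i$-axis carries $\mu$ to $j$ while fixing $\zeta_{2m}$, so by Lemma~\ref{lemma:conj}\,(ii) it conjugates $N^{-1}GN$ to a group $G(\mathcal D_m,\mathcal C_2,\varphi'')$. Reading off the pairing $\xi\mapsto\eta$ from the diagonal elements, $\varphi''$ inverts $H'\zeta_{2m}$ and sends $H'j$ to $H'\zeta_{2m}j$, so it is an automorphism $\varphi_{m-1,s}$ of $\mathcal D_m/\mathcal C_2$. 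By Theorem~\ref{thm:A} every such binary dihedral reflection group is conjugate in $\U_2(\HH)$ to a standard copy $G(\mathcal D_m,\mathcal C_2,\psi_{r})$ with $r = 1$ or $1 < r \le n/2 = m/2$; since the cyclic part is inverted, $r \equiv m-1$, and the reduction $r\mapsto n-r = 1$ (an instance of Lemma~\ref{lemma:conj}\,(i), as $j\in L_{\psi_1}$ with $\rho(j)\psi_1 = \psi_{m-1}$) returns the group to the standard copy $G(\mathcal D_m,\mathcal C_2,\psi_1)$. The case $m = 1$ is checked by hand.

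I expect the automorphism bookkeeping to be the main obstacle. The conjugation does not produce $\psi_1$ on the nose: it yields an automorphism inverting the cyclic part, with a nontrivial shift, so one must recognise it as a $\varphi_{m-1,s}$ and apply the conjugacy relations behind Theorem~\ref{thm:A} to reduce it. These reductions are available precisely because $m$ is odd and $\ell = 2$, so that both $r$ and $n-r$ are admissible and $\psi_1$, $\psi_{m-1}$ are conjugate. The second delicate point is verifying that the two order-$4$ quaternions $\mu,\nu$ generate a copy of $\mathcal D_m$ rather than a cyclic or larger group, which rests on the single identity $\nu\mu = \zeta_{2m}$.
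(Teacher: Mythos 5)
Your proposal is correct, and it is essentially the paper's own argument run in the opposite direction. The paper starts from $G(\mathcal D_m,\mathcal C_2,\psi_1)$, conjugates by the matrix $R_{1,i}$ of \eqref{eq:RST} supplied by its system of imprimitivity $\soi{(1,i),(i,1)}$ (Theorem~\ref{thm:bin_dihedral}\,(ii)), and finds that the conjugated generators already exhibit a group $G(\mathcal D_{2m},\id,\varphi)$ in standard position; a single further conjugation by $\begin{pmatrix}1&0\\0&j\end{pmatrix}$ turns $\varphi$ into $\psi_{2m-1}$. You instead start from $G(\mathcal D_{2m},\id,\psi_{2m-1})$ and re-coordinatise along its system $\soi{(1,j),(j,1)}$ (Theorem~\ref{thm:bin_dihedral}\,(iii)). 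The mechanism is identical, but your direction is more expensive at the end: the diagonal part of $N^{-1}GN$ is not in standard position, so you must identify $K'=\(\zeta_{2m},\mu\)\simeq\mathcal D_m$ and $H'=\mathcal C_2$, rotate $\mu$ to $j$, and then run the $\varphi_{r,s}$-reduction behind Theorem~\ref{thm:A} (both the normalisation of the shift parameter and the flip $r\mapsto n-r$) to land on $\psi_1$ --- machinery the paper's direction never needs. What your direction buys in exchange is that the final identification can be made by quoting the classification in Theorem~\ref{thm:A} rather than by recognising explicit generators.

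Two small repairs, neither a real gap. First, the diagonal subgroup of $N^{-1}GN$ is not generated by $\begin{pmatrix}-1&0\\0&1\end{pmatrix}$ and $\begin{pmatrix}\mu&0\\0&\nu\end{pmatrix}$ alone: those two commuting matrices generate a group of order $8$, whereas the diagonal subgroup has order $8m$. You must also include products of the anti-diagonal generators taken in the other order, e.g.\ the square of the anti-diagonal image of the cyclic generator, which is $\begin{pmatrix}\nu\mu&0\\0&\mu\nu\end{pmatrix}=\begin{pmatrix}\zeta_{2m}&0\\0&\zeta_{2m}^{-1}\end{pmatrix}$; this is exactly how $\zeta_{2m}$ enters $K'$, and with it your conclusions $K'=\(\zeta_{2m},\mu\)$, $H'=\mathcal C_2$ and the computed pairing are correct. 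Second, the scalar rotation carrying $\mu$ to $j$ is not literally an instance of Lemma~\ref{lemma:conj}\,(ii), since there $\xi$ is required to normalise $K$, whereas your $\omega$ moves $K'$ onto a different subgroup; the fact you actually need --- that conjugation by $\omega\id$ carries $G(K',H',\varphi')$ to $G(\omega K'\omega^{-1},\omega H'\omega^{-1},\,\cdot\,)$ --- is immediate from the definitions, so this is only a citation slip.
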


\begin{proof}
The matrices
\[
  s = \begin{pmatrix}0&1\\1&0\end{pmatrix},\quad
  f = \begin{pmatrix}-1&0\\0&1\end{pmatrix},\quad
  g = \begin{pmatrix}\zeta_{2m}&0\\0&\zeta_{2m}\end{pmatrix}\quad\text{and}\quad
  h = \begin{pmatrix}j&0\\0&-j\end{pmatrix}.
\]
generate $G = G(\mathcal D_m,\mathcal C_2,\psi_1)$. The conjugates of these 
matrices by $R_{1,i}$ are
\[
  s' = \begin{pmatrix}0&-1\\-1&0\end{pmatrix},\quad
  f' = \begin{pmatrix}0&-i\\i&0\end{pmatrix},\quad
  g' = \begin{pmatrix}\zeta_{2m}&0\\0&\zeta_{2m}\end{pmatrix}\quad\text{and}\quad
  h' = \begin{pmatrix}j&0\\0&-j\end{pmatrix}.
\]
Thus $s'h'^2 = \begin{pmatrix}0&1\\1&0\end{pmatrix}$ and the order of $s'f'g'$ 
is $4m$. Therefore $\(s'h'^2, s'f'g', h'\) = G(\mathcal D_{2m},\id,\varphi)$,
where $\varphi(i\zeta_{2m}) = -i\zeta_{2m}$ and $\varphi(j) = -j$. Now the
matrix $\begin{pmatrix}1&0\\0&j\end{pmatrix}$ conjugates the group
to $G(\mathcal D_{2m},\id,\psi_{2m-1})$, as required.
\end{proof}

\begin{rem}
If $m$ is even, the reflections $r_0 = T$ and $r_1 = R_{1,i}$ of
\eqref{eq:RST} normalise $G = G(\mathcal D_m,\mathcal C_2,\psi_1)$. 
If $m\equiv 0\pmod 4$, $\(G,r_0\)$ and $\(G,r_1\)$ are isomorphic to 
$G(\mathcal D_m,\mathcal C_4,\psi_1)$; if $m\equiv 2\pmod 4$, they are 
isomorphic to $G(\mathcal D_{2m},\mathcal C_2,\psi_{m-1})$. For all even
$m$, $W = \(G,r_0,r_1\)$ is isomorphic to $\mathcal C_{2m}\boxdot\mathcal O$, 
which (for $m > 2$) is a primitive group whose complexification is imprimitive. 

However, if $m = 2$, the systems of imprimitivity for 
$E = G(\mathcal D_2,\mathcal C_2,\psi_1)$ are the rows of $r_0$, $r_1$, 
$r_2 = R_{1,j}$ and $r_3 = R_{1,k}$. The rows of $r_2$ and $r_3$
remain systems of imprimitivity for $W = \(E,r_0,r_1\)$, which is
isomorphic to $\mathcal C_4\boxdot\mathcal O$. In particular,
$\mathcal C_4\boxdot\mathcal O$ is imprimitive. The group
$\(E,r_0,r_1,r_2\)$ is isomorphic to $G(\mathcal O,\mathcal D_2,\id)$
and $\(E,r_0,r_1,r_2,r_3\)$ is --- in the notation of 
\cite[Table III]{cohen:1980} --- the primitive group $W(P_3)$ with
primitive complexification. The subgroup 
$S = \(r_0,r_1,r_2,r_3\)\simeq G(\mathcal I,\id,\Theta)$ is a double cover 
of $\Sym(5)$, $W(P_3) = ES$, $\mathcal D_2\circ\D_4\simeq E\normal W(P_3)$ 
and $E\cap S = Z(E)$.
\end{rem}

\subsection{Extended binary polyhedral groups}
From the description of $\mathcal O$ in Theorem 5.14 of 
\cite{lehrer-taylor:2009}, the group $\mathcal C_4\boxdot\mathcal O$
of \S\ref{sec:extST} is generated by
\[
  g_0 = \begin{pmatrix}i&0\\0&i\end{pmatrix},\enspace
  g_1 = \tfrac12\begin{pmatrix}-1+i&\phantom{-}1+i\\-1+i&-1-i\end{pmatrix},
  \enspace
  g_2 = \tfrac{1}{\sqrt2}\begin{pmatrix}1+i&0\\0&1-i\end{pmatrix}\enspace
  \text{and}\enspace
  s = \begin{pmatrix}0&j\\-j&0\end{pmatrix}.
\]
The subgroup $\(g_1, g_0g_2, s\)$ is $\mathcal C_4\boxdot_2\mathcal O$. 
Similarly $\mathcal C_4\boxdot\mathcal I$ is generated by
\[
  \begin{pmatrix}i&0\\0&i\end{pmatrix},\quad
  \tfrac12\begin{pmatrix}\tau^{-1}-\tau i&1\\-1&\tau^{-1}+\tau i\end{pmatrix},
  \quad
  \begin{pmatrix}i&0\\0&-i\end{pmatrix}\enspace
  \text{and}\enspace
  \begin{pmatrix}0&j\\-j&0\end{pmatrix}.
\]

Direct calculation shows that the rows of $R_{1,j}$ and $R_{1,k}$ are 
systems of imprimitivity for these groups.

\begin{thm}\label{thm:boxdot}
The groups $\mathcal C_4\boxdot\mathcal O$, $\mathcal C_4\boxdot_2\mathcal O$
and $\mathcal C_4\boxdot\mathcal I$ are conjugate in $\U_2(\HH)$ to 
$G(\mathcal O,\mathcal C_2,\id)$, $G(\mathcal O,\id,\beta)$ and 
$G(\mathcal I,\mathcal C_2,\id)$.
\end{thm}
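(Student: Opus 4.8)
The plan is to carry each group, by a reflection that takes one of its systems of imprimitivity to the standard one, into monomial form, and then to recognise the resulting $G(K,H,\varphi)$ using the tables and the conjugacy results already established. The underlying mechanism is this: if $R\in\U_2(\HH)$ is a reflection of order $2$ whose two rows span the lines of a system of imprimitivity $\Sigma$ for a group $G$, then, regarded as a change of basis, $R$ carries $\Sigma$ to $\soi{e_1,e_2}$; hence every element of $RGR$ preserves $\soi{e_1,e_2}$ and is therefore monomial, that is, diagonal or anti-diagonal. By \cite[Theorem (2.2)]{cohen:1980} the conjugate $RGR$ is a group $G(K,H,\varphi)$, and $K$, $H$, $\varphi$ are read off from the images of the generators: $K$ is the set of upper-left entries of the diagonal elements, $H$ the subset of those with lower-right entry $1$, and $\varphi$ the induced map $H\xi\mapsto H\eta$. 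I would take $R=R_{1,j}$ of \eqref{eq:RST}, whose rows span $\soi{(1,j),(j,1)}$; this is a system of imprimitivity for all three groups by the calculation recorded just above.

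First I would conjugate the generators listed above by $R_{1,j}$. A quick computation gives $R_{1,j}\,s\,R_{1,j}=\begin{pmatrix}1&0\\0&-1\end{pmatrix}$ for the swap generator $s=\begin{pmatrix}0&j\\-j&0\end{pmatrix}$, so the diagonal element $\begin{pmatrix}1&0\\0&-1\end{pmatrix}$ lies in each conjugate; since $\varphi$ fixes the identity coset, this forces $-1\in H$ and hence $\mathcal C_2\subseteq H$. Together with the orders $192$, $96$, $480$ and $|G(K,H,\varphi)|=2|K||H|$ this determines $K$ and $H$: for $\mathcal C_4\boxdot\mathcal O$ one gets $K=\mathcal O$, $H=\mathcal C_2$; for $\mathcal C_4\boxdot\mathcal I$ one gets $K=\mathcal I$, $H=\mathcal C_2$; and for $\mathcal C_4\boxdot_2\mathcal O$, where the diagonal first coordinates turn out to generate only $\mathcal T$, the order $96=2|K||H|$ with $\mathcal C_2\subseteq H$ forces $K=\mathcal T$, $H=\mathcal C_2$.

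It then remains to pin down $\varphi$ and to match with the target groups. For $\mathcal C_4\boxdot\mathcal O$ and $\mathcal C_4\boxdot\mathcal I$ I would check that the induced automorphism is inner, of the form $\rho(\xi)$ with $\xi$ of order $4$; then $\xi^2=-1\in H$, so $\xi\in L_{\rho(\xi)}$, and Corollary \ref{cor:id} gives conjugacy to $G(\mathcal O,\mathcal C_2,\id)$ and $G(\mathcal I,\mathcal C_2,\id)$ respectively. For $\mathcal C_4\boxdot_2\mathcal O$ one obtains $G(\mathcal T,\mathcal C_2,\varphi)$ with $\varphi$ of order $2$; Lemma \ref{lemma:1}\,(i) makes it conjugate to $G(\mathcal T,\mathcal C_2,\rho(\delta))$, which Theorem \ref{thm:TO} identifies with $G(\mathcal O,\id,\beta)$.

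I expect the $\mathcal C_4\boxdot_2\mathcal O$ case to be the main obstacle. Beyond the routine quaternion arithmetic, the delicate point is to verify that the diagonal first coordinates of its image generate $\mathcal T$ rather than $\mathcal O$, so that one lands in the proper group $G(\mathcal O,\id,\beta)\simeq G(\mathcal T,\mathcal C_2,\rho(\delta))$ and not in the complex-type group $\ST(13)\simeq G(\mathcal O,\id,\rho(\delta))$ of the same order. The clean invariant separating the two is the presence of an element of order $8$ among those first coordinates, which is absent here; confirming this, and identifying $\varphi$ to within the equivalence of Lemma \ref{lemma:1}\,(i), is where the care is needed.
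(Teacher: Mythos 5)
Your proposal is correct and follows essentially the same route as the paper's proof: conjugate each group by $R_{1,j}$, whose rows span the system of imprimitivity $\soi{(1,j),(j,1)}$ established just before the theorem, read off that the resulting monomial group is $G(\mathcal O,\mathcal C_2,\varphi)$, $G(\mathcal T,\mathcal C_2,\varphi)$ or $G(\mathcal I,\mathcal C_2,\varphi)$, and then eliminate $\varphi$ using Lemma \ref{lemma:1}, Corollary \ref{cor:id} and Theorem \ref{thm:TO}. The only cosmetic difference is in the last step for $\mathcal C_4\boxdot\mathcal I$: the paper exhibits the explicit diagonal conjugation by $\begin{pmatrix}1&0\\0&k\end{pmatrix}$, which is exactly your Corollary \ref{cor:id} argument with $\xi = k$.
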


\begin{proof}
The reflection $R_{1,j}$ conjugates 
$\mathcal C_4\boxdot\mathcal O$ to $G = G(K,H,\varphi)$ where 
$K\simeq\mathcal O$ and $H\simeq\mathcal C_2$. It follows from Lemma 
\ref{lemma:1} that $G$ is conjugate to $G(\mathcal O,\mathcal C_2,\id)$.
The same argument shows that $\mathcal C_4\boxdot_2\mathcal O$ is
conjugate to $G(\mathcal T,\mathcal C_2,\rho(\delta)))$. Then by
Theorem~\ref{thm:TO} it is conjugate to $G(\mathcal O,\id,\beta)$.

Similarly $R_{1,j}$ conjugates $\mathcal C_4\boxdot\mathcal I$ to 
$G = G(K,H,\varphi)$ where $K\simeq\mathcal I$ and $H\simeq\mathcal C_2$.
Then $\begin{pmatrix}1&0\\0&k\end{pmatrix}$ conjugates $G$ to 
$G(\mathcal I,\mathcal C_2,\id)$.
\end{proof}

\begin{table}[ht]
\caption{The proper irreducible imprimitive quaternionic reflection groups
$G(K,H,\varphi)$}\label{tbl:5}
\smallskip
\centering
\begin{tabular}{l|l|r}
&\qquad conditions, isomorphisms&order\\
\toprule
$G(\mathcal D_m,\id,\psi_r)$&$r\ne 1$&$8m$\\
$G(\mathcal D_m,\mathcal C_\ell,\psi_r)$&$\ell\ne1$ and $\ell\mid 2m$
  and $m\ell > 2$&$8m\ell$\\
  &$G(\mathcal D_m,\mathcal C_2,\psi_1)\simeq
  G(\mathcal D_{2m},\id,\psi_{2m-1})$, $m$ odd&\\
\midrule
$G(\mathcal D_m,\mathcal D_m,\id)$&&$32m^2$\\
$G(\mathcal D_{2m},\mathcal D_m,\id)$&&$64m^2$\\
\midrule
$G(\mathcal T,\mathcal T,\id)$&&1\,152\\
$G(\mathcal T,\mathcal D_2,\rho(\delta))$&&384\\
$G(\mathcal T,\mathcal C_2,\rho(\delta))$&
   ${}\simeq G(\mathcal O,\id,\beta)\simeq\mathcal C_4\boxdot_2\mathcal O$
   &96\\
\midrule
$G(\mathcal O,\mathcal O,\id)$&&4\,608\\
$G(\mathcal O,\mathcal T,\id)$&&2\,304\\
$G(\mathcal O,\mathcal D_2,\id)$&&768\\
$G(\mathcal O,\mathcal C_2,\id)$&
  ${}\simeq\mathcal C_4\boxdot\mathcal O$&192\\
\midrule
$G(\mathcal I,\mathcal I,\id)$&&28\,800\\
$G(\mathcal I,\mathcal C_2,\id)$&
  ${}\simeq\mathcal C_4\boxdot\mathcal I$&480\\
$G(\mathcal I,\mathcal C_2,\Theta)$&&480\\
$G(\mathcal I,\id,\Theta)$&&240\\
\bottomrule
\end{tabular}

\medskip
$r = 1$ or $1 < r \le n/2$ and $\gcd(r,n) = \gcd(\kappa,\nu) = 1$\\
$\kappa = n/\gcd(n,r+1)$, $\nu = n/\gcd(n,r-1)$, where $n = 2m/\ell$

\smallskip
$\delta = \frac{1}{\sqrt2}(i-j)$,\enspace $\beta\in\Aut(\mathcal O)$ is central,
\enspace $\Theta\in\Aut(\mathcal I)\setminus\Inn(\mathcal I)$
\end{table}

\section*{Acknowledgements}
The proofs in this paper do not depend on computer calculations. However,
the computer algebra system \textsf{Magma} \cite{magma:1997} was an invaluable
aid exploring and validating small examples. \textsf{Magma} code to
construct irreducible quaternionic reflection groups is available at 
\url{https://www.maths.usyd.edu.au/u/don/software.html}.

In 2024 while investigating a question from Shayne Waldron concerning \textsf{Magma}
calculations with quaternionic reflection groups I noticed that there are examples
of such groups that possess more than one system of imprimitivity. This implies that 
there are errors in the original classification. I asked Ulrich Thiel if this was known
and I am grateful to Ulrich and to Johannes Schmitt for providing the reference to the
paper by Yamagishi where further examples were pointed out.  I thank Gwyn Bellamy,
Johannes Schmitt, Ulrich Thiel and Shayne Waldron for ongoing correspondence about 
complex and quaternionic reflection groups.

Thanks also to the referees for helpful suggestions and alerting me to several
infelicities.
\bibliography{ImprimitiveQRG.bib}
\bibliographystyle{srtnumbered}
\end{document}